\documentclass[final,3p,times]{elsarticle}

\usepackage{graphicx}%
\usepackage{multirow}%
\usepackage{amsmath,amssymb,amsfonts}%
\usepackage{amsthm}%
\usepackage{mathrsfs}%
\usepackage[title]{appendix}%
\usepackage{xcolor}%
\usepackage{textcomp}%
\usepackage{manyfoot}%
\usepackage{booktabs}%
\usepackage{algorithm}%
\usepackage{algorithmicx}%
\usepackage{algpseudocode}%
\usepackage{listings}%
\usepackage{xcolor}
\usepackage{subcaption}

\newcommand{\corr}[1]{\textcolor{black}{#1}}

\usepackage{tikz}
\usepackage{pgfplots}
\pgfplotsset{compat=newest}

\theoremstyle{definition}

\theoremstyle{plain}
\newtheorem{theorem}{Theorem}[section]
\newtheorem{lemma}{Lemma}[section]
\newtheorem{remark}{Remark}[section]

\newtheorem{corollary}[theorem]{Corollary}
\newtheorem{example}{Example}

\numberwithin{equation}{section}

\usepackage[english]{babel}

\journal{Applied Numerical Mathematics}
\renewcommand{\corr}[1]{{\color{black}#1}}
\begin{document}

\begin{frontmatter}

\title{An optimal order  fractional backward collocation method for adjoint Volterra integro-differential equations}

\author[f4]{Mahmoud A. Zaky$^{*,}$}
  \ead{ma.zaky@yahoo.com;mibrahimm@imamu.edu.sa}

\address[f4]{Department of Mathematics and Statistics, College of Science, Imam Mohammad Ibn Saud Islamic University (IMSIU), Riyadh, Saudi Arabia}

\cortext[mycorrespondingauthor]{Corresponding author: M.A. Zaky}

\begin{abstract}
This paper develops and analyzes an optimal-order fractional backward collocation method for adjoint Volterra integro-differential equations with weakly singular kernels. The backward Volterra structure, together with the weakly singular kernel, induces fractional-power singularities at the terminal endpoint, thereby reducing the classical regularity of the exact solution and causing order deterioration in standard polynomial collocation methods. We first establish a regularity result that characterizes the terminal singular behavior of the solution, showing that the solution is continuously differentiable, whereas its second derivative may exhibit a weak singularity at the terminal endpoint. Motivated by this regularity structure, we introduce a terminally graded mesh and construct a fractional backward collocation scheme whose local approximation space is adapted to the endpoint singularity. Rigorous convergence and superconvergence estimates are derived for both the solution and its derivative. With suitable choices of the fractional parameter and the mesh-grading exponent, the proposed method attains the optimal convergence orders dictated by the local approximation degree. Numerical experiments confirm the theoretical predictions and demonstrate the accuracy and effectiveness of the method for adjoint weakly singular Volterra integro-differential equations.
\end{abstract}

\begin{keyword}
orthogonal polynomials \sep
Fractional backward Lagrange \sep
spectral approximation \sep
weighted Sobolev spaces \sep
weakly regular solutions
\end{keyword}

\end{frontmatter}

\section{Introduction}

Weakly singular Volterra integro-differential equations have been extensively
studied in numerical analysis because Abel-type kernels often reduce the
regularity of the exact solution and lead to order reduction in standard
polynomial approximation methods; see, e.g.,
\cite{tang1992superconvergence,hu1996stieltjes,hu1998geometric,
brunner1983nonpolynomial,brunner1986polynomial}. Adjoint Volterra equations
form an important class within this framework. They are not merely formal dual
problems but arise naturally in error representation, duality arguments, and
a posteriori error estimation for Volterra equations. Shaw and
Whiteman~\cite{shaw1996discontinuous} showed that the adjoint Volterra
integral operator and the associated adjoint second-kind Volterra equation,
called the dual Volterra equation in their terminology, provide an important
tool for deriving a posteriori error estimates for discontinuous Galerkin
approximations of linear second-kind Volterra equations. This observation
places adjoint Volterra problems at the interface between the qualitative
theory of Volterra equations and the rigorous analysis of numerical methods.

In this paper, we study the adjoint weakly singular Volterra
integro-differential equation
\begin{equation}\label{eq:1.1R}
\begin{aligned}
\varphi'(x)
&=
a(x)\varphi(x)+\chi(x)
+
\int_x^b (s-x)^{\nu-1}\mathscr K(x,s)\varphi(s)\,ds,
\qquad 0\le x\le b,\\
\varphi(b)&=\varphi_b,
\end{aligned}
\end{equation}
where $0<\nu<1$. The kernel in \eqref{eq:1.1R} has an Abel-type singularity
along the diagonal $s=x$. Although this singularity is integrable, it
significantly affects the differentiability of the solution. Even when the data
$a$, $\chi$, and $\mathscr K$ are smooth, the solution of \eqref{eq:1.1R}
may exhibit limited regularity at the terminal endpoint. In particular,
terminal fractional powers are generated by the weakly singular Volterra
operator. This loss of regularity is the central analytical difficulty in the
numerical approximation of \eqref{eq:1.1R}.

Classical spectral and polynomial collocation methods are highly effective for
smooth problems, but their convergence behavior depends strongly on the
regularity of the exact solution. When endpoint singularities or limited
smoothness are present, the expected high-order or spectral accuracy may
deteriorate. Previous studies of weakly singular Volterra
integro-differential equations have shown that reduced regularity near a
singular endpoint can impose a significant order barrier on polynomial
collocation methods, particularly on uniform meshes
\cite{lubich1983runge,mustapha2013superconvergent,zhou2021block,
wei2012convergence}. For the adjoint problem considered here, the singularity
occurs at the terminal endpoint $b$, where higher derivatives of the solution
may become unbounded as $x\to b^{-}$.

Several endpoint-adapted strategies have been developed to address this
regularity-induced order reduction, including smoothing transformations and
nonpolynomial approximation spaces. Ameen et
al.~\cite{AmeenZakyDoha2021} developed a singularity-preserving spectral
collocation method for nonlinear adjoint Volterra integral equations arising
from terminal-value problems with right-sided Caputo fractional derivatives.
Their method combines a smoothing transformation with a Jacobi--Gauss
collocation scheme to accommodate the terminal singularity. Zaky
\cite{Zaky2026EndpointAdaptive} developed an endpoint-adaptive fractional
spectral collocation method for nonlinear adjoint Volterra integral equations
arising from tempered fractional terminal-value problems with nonsmooth
solutions. In that approach, backward fractional basis functions are adapted
directly to the terminal endpoint, avoiding an auxiliary smoothing
transformation and allowing both rational and irrational fractional orders.
Zaky et al.~\cite{ZakyAmeenAbuArqubDoha2025RightCaputo} proposed a high-order
fractional spectral collocation method for nonlinear adjoint Volterra integral
equations arising from right-sided Caputo terminal-value problems, using
backward fractional Legendre functions adapted to the terminal singularity.

More recently, Zaky~\cite{Zaky2026FractionalBackwardSpectral} introduced a
fractional backward spectral approximation framework for weakly singular
adjoint Volterra integral equations. That work constructs fractional backward
orthogonal functions whose approximation spaces reflect the terminal
singularity generated by weakly singular kernels and develops the associated
fractional backward Lagrange interpolation theory, error estimates, and
stability analysis. The present paper complements that
approximation-theoretic framework. Rather than treating adjoint integral
equations by a global spectral method, we develop an optimal-order fractional
backward collocation method for adjoint Volterra integro-differential
equations.

Much of the existing collocation analysis for weakly singular Volterra
problems focuses on singular behavior near the initial endpoint 
\cite{brunner2017volterra,brunner2004collocation,ma2024convergence}.
Here, both the approximation space and the graded mesh are instead designed
to resolve the weak singularity at the terminal endpoint $b$. The main
contributions of this paper are summarized as follows.
\begin{itemize}
\item We establish a terminal regularity result for the adjoint weakly singular
Volterra integro-differential equation \eqref{eq:1.1R}, showing that smooth
data may still generate terminal fractional-power components in the exact
solution.

\item We construct a fractional backward collocation framework adapted to this
terminal singular structure. A graded mesh refined toward $b$ is introduced,
and the local approximation space is generated by fractional powers in a
backward transformed variable.

\item We formulate the backward collocation scheme by approximating
$\psi=\varphi'$ and recovering $\varphi_h$ from the terminal condition by
backward integration. The resulting discretization is consistent with the
backward Volterra structure of the problem.

\item We derive global convergence estimates for $\psi$ and $\varphi$, together
with superconvergence estimates for $\varphi$, for $\psi$ at the collocation
points, and for $\psi$ at the regular endpoint. Suitable choices of the
fractional parameter $\mu$ and grading exponent $q$ allow the method to attain
the maximal orders predicted by the corresponding approximation and
superconvergence estimates.

\item Numerical experiments confirm the predicted convergence behavior and
illustrate the effectiveness of the method for adjoint weakly singular
Volterra integro-differential equations with terminal-endpoint singularities.
\end{itemize}

Although our analysis focuses on the prototypical adjoint weakly singular
Volterra integro-differential equation \eqref{eq:1.1R}, the proposed framework
also provides a basis for developing fractional backward collocation
methods for more general terminal-value and Volterra-type problems with weak
endpoint singularities.

The paper is organized as follows. Section~\ref{sec:solution-regularity}
establishes the terminal regularity structure of the solution of
\eqref{eq:1.1R}. Section~\ref{sec2} introduces the backward graded mesh,
fractional polynomial approximation spaces, and associated interpolation
operators. Section~\ref{sec3} presents the fractional backward collocation
method. Section~\ref{sec4} establishes the auxiliary estimates required for
the error analysis, and Section~\ref{sec5} derives the convergence and
superconvergence results. Section~\ref{sec:numericalR} presents numerical
experiments that verify the theoretical rates, and
Section~\ref{sec:conclusion} concludes the paper.

\medskip


\section{Regularity of the solution}
\label{sec:solution-regularity}

We first recall the regularity properties of the solution of the adjoint Volterra integro-differential equation \eqref{eq:1.1R}. In the present terminal formulation, the Volterra integral is taken over \([x,b]\); hence the weak singularity in the solution induced by the Abel-type kernel occurs at the terminal endpoint \(x=b\). The following theorem gives the corresponding differentiability estimate and terminal expansion.

\begin{theorem}
\label{thm:1.1R}
Let \(\Lambda:=[0,b]\), \(D_R:=\{(x,s):0\le x\le s\le b\}\), and \(\ell\in\mathbb N\). Assume that \(a,\chi\in C^\ell(\Lambda)\), \(\mathscr K\in C^\ell(D_R)\), and that \(\mathscr K(x,x)\ne0\) \corr{for all \(x\in\Lambda\).} If \(\varphi\) denotes the solution of the adjoint weakly singular Volterra integro-differential equation \eqref{eq:1.1R}, and if \(\psi:=\varphi'\), then
\begin{equation}\label{eq:u-regularity-mainR}
\varphi\in C^1(\Lambda)\cap C^{\ell+1}([0,b)).
\end{equation}
Moreover, there exists a constant \(C\), depending only on the data, such that
\begin{equation}\label{eq:u-derivative-boundR}
|\varphi^{(\ell+1)}(x)|\le C(b-x)^{\nu-\ell},\qquad 0\le x<b.
\end{equation}
The solution also admits the terminal expansion
\begin{equation}\label{eq:u-expansion-mainR}
\varphi(x)=\sum_{(i,k)\in\mathcal I_{\nu,\ell}^{R}}\alpha_{i,k}^{R}(\nu)(b-x)^{k(\nu+1)+i}+Y_{\ell+1}^{R}(x,\nu),\qquad x\in\Lambda,
\end{equation}
where $\mathcal I_{\nu,\ell}^{R}:=\{(i,k)\in\mathbb N_0^2:\ k(1+\nu)+i<1+\ell\}$, the coefficients \(\alpha_{i,k}^{R}(\nu)\) are determined by the data of the problem, and \(Y_{\ell+1}^{R}(\cdot,\nu)\in C^{\ell+1}(\Lambda)\). Consequently,
\begin{equation}\label{eq:v-regularity-mainR}
\psi\in C(\Lambda)\cap C^\ell([0,b)),\qquad |\psi^{(\ell)}(x)|\le C(b-x)^{\nu-\ell},\qquad 0\le x<b.
\end{equation}
\end{theorem}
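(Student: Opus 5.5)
The plan is to reduce \eqref{eq:1.1R} to the classical forward weakly singular VIDE by reflecting time, and then invoke the known regularity theory for that problem (Brunner's theory, cf.~\cite{brunner2004collocation,brunner2017volterra}). Set $t:=b-x$, $u(t):=\varphi(b-t)$, and $\sigma:=b-s$. Then $u'(t)=-\varphi'(b-t)$, and $s\in[x,b]$ corresponds to $\sigma\in[0,t]$ with $s-x=t-\sigma$. Hence \eqref{eq:1.1R} becomes
\begin{equation*}
u'(t)=\tilde a(t)u(t)+\tilde\chi(t)+\int_0^t (t-\sigma)^{\nu-1}\tilde{\mathscr K}(t,\sigma)u(\sigma)\,d\sigma,\qquad u(0)=\varphi_b,
\end{equation*}
where $\tilde a(t)=-a(b-t)$, $\tilde\chi(t)=-\chi(b-t)$, and $\tilde{\mathscr K}(t,\sigma)=-\mathscr K(b-t,b-\sigma)$. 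The map $(x,s)\mapsto(t,\sigma)$ sends $D_R$ onto $\{0\le\sigma\le t\le b\}$, so the transformed data have the same $C^\ell$ smoothness. The condition $\mathscr K(x,x)\ne0$ transfers to the diagonal condition $\tilde{\mathscr K}(t,t)\ne0$. Existence and uniqueness of a continuous solution follow from the standard resolvent argument for the equivalent second-kind equation $u(t)=\varphi_b+\int_0^t(\ldots)$.

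For the forward problem, the known result gives $u\in C^1[0,b]\cap C^{\ell+1}(0,b]$ with $|u^{(\ell+1)}(t)|\le Ct^{\nu-\ell}$. It also gives the expansion $u(t)=\sum_{(i,k)}\alpha_{i,k}t^{k(1+\nu)+i}+Y_{\ell+1}(t)$ with $Y_{\ell+1}\in C^{\ell+1}$. Transforming back with $t=b-x$ and $\varphi^{(j)}(x)=(-1)^ju^{(j)}(b-x)$ yields \eqref{eq:u-regularity-mainR}, \eqref{eq:u-derivative-boundR} and \eqref{eq:u-expansion-mainR}, with $\alpha^R_{i,k}=\alpha_{i,k}$. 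Estimate \eqref{eq:v-regularity-mainR} is immediate from $\psi=\varphi'$.

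To keep the argument self-contained, I would briefly sketch why the expansion holds. Start from the second-kind form $u=\varphi_b+\mathcal V u$ with
\begin{equation*}
(\mathcal Vu)(t)=\int_0^t\tilde a u+\tilde\chi+\int_0^t\int_0^\tau(\tau-\sigma)^{\nu-1}\tilde{\mathscr K}(\tau,\sigma)u(\sigma)\,d\sigma\,d\tau .
\end{equation*}
Then perform Picard-type iteration, noting two mapping properties:
\begin{itemize}
\item integrating a term $t^{\beta}$ produces $t^{\beta+1}$ times a smooth factor;
\item the double integral with the Abel kernel acting on $\sigma^\beta$ produces $t^{\beta+\nu+1}$ times smooth factors, via the Taylor expansion of $\tilde{\mathscr K}$ and Beta-function identities.
\end{itemize}
Starting from smooth functions, the generated exponents are exactly $k(1+\nu)+i$. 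One truncates at exponent $\ell+1$ and shows that the remainder solves a second-kind equation with a $C^{\ell+1}$ forcing, so it lies in $C^{\ell+1}$. The derivative bound then follows by differentiating the singular terms: the smallest non-integer exponent is $1+\nu$, which gives $(b-x)^{1+\nu-(\ell+1)}=(b-x)^{\nu-\ell}$. This exponent cannot be cancelled because $\mathscr K(b,b)\ne0$ makes $\alpha_{0,1}\propto \mathscr K(b,b)\,(\ldots)$ generically nonzero. The hypothesis is therefore only needed to assert sharpness, not for the upper bound.

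The main obstacle is the remainder step, i.e.\ proving $Y_{\ell+1}\in C^{\ell+1}$ up to $t=0$. The approach there is to write the kernel operator in the form $\int_0^1(1-z)^{\nu-1}\tilde{\mathscr K}(t,tz)\,t^{\nu}\,\cdot\,dz$. Differentiation in $t$ is then legitimate, and one can show that the Abel operator maps $C^m$ into functions of the form $t^\nu C^m$. With this in hand, an induction on $\ell$ closes the argument. Since this is classical, I would cite it rather than reprove it in full.
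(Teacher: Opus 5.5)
Your proposal is correct, but it takes a different route from the paper.

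\textbf{Your route.} You reflect time, $t=b-x$, which turns \eqref{eq:1.1R} into the classical forward VIDE with an Abel kernel. You then import Brunner's regularity theorem wholesale. Since $2-\alpha=1+\nu$, the index set carries over unchanged and $\alpha^R_{i,k}=\alpha_{i,k}$.

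\textbf{The paper's route.} The paper never leaves the backward variable. It substitutes $\varphi=\varphi_b-\int_x^b\psi$ and obtains a second-kind equation for the derivative itself, namely \eqref{eq:5.3R}: $\psi=\chi_R+\mathcal V\psi$. This equation has two features:
\begin{itemize}
\item the kernel $a(x)+\mathscr K_1^R(x,s)$ is continuous but not smooth, since $\partial_x\mathscr K_1^R$ behaves like $(s-x)^{\nu-1}$ near the diagonal;
\item the free term $\chi_R$ carries the terminal powers $(b-x)^{i+\nu}$.
\end{itemize}
The paper then reads off the exponents $i+k(1+\nu)$ from how the smooth and Abel integrals act on terminal monomials. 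For the remainder it invokes a ``standard regularity argument'' and iteration.

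\textbf{What your reduction buys.} It gives rigor and economy. The points the paper only sketches are inherited from an established theorem. These are that $Y^R_{\ell+1}$ is $C^{\ell+1}$ up to $x=b$, and that one may differentiate a Volterra equation whose kernel $\mathscr K_1^R$ is not $C^\ell$. You also correctly observe that $\mathscr K(x,x)\neq0$ is needed only for sharpness, not for the upper bound. In fact the coefficient of $(b-x)^{1+\nu}$ is proportional to $\varphi_b\mathscr K(b,b)$, so it can vanish, e.g.\ when $\varphi_b=0$.

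\textbf{What the paper's route buys.} Its $\psi$-reformulation is exactly the operator equation used in the collocation error analysis of Section~\ref{sec5}, so the regularity proof doubles as setup for that analysis. It also shows directly where the leading term $\varphi_b\mathscr K(b,b)(b-x)^\nu/\nu$ of $\psi$ originates.

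\textbf{Minor notational point.} In your second-kind form, $\tilde\chi$ should sit inside $\int_0^t$. With $\tilde\chi$ included, the map is affine rather than linear.
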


\begin{proof}
Set \(\psi:=\varphi'\). From the terminal condition, we have $\varphi(x)=\varphi_b-\int_x^b\psi(s)\,ds$. Substitution into \eqref{eq:1.1R} gives
\begin{equation}\label{eq:v-equation-right-regularity}
\psi(x)=\chi_R(x)-\int_x^b\left[a(x)+\mathscr K_1^R(x,s)\right]\psi(s)\,ds,\qquad x\in\Lambda,
\end{equation}
where
\begin{equation}\label{eq:gR-regularity-def}
\chi_R(x):=\chi(x)+a(x)\varphi_b+\varphi_b\int_x^b(s-x)^{\nu-1}\mathscr K(x,s)\,ds,
\end{equation}
and
\begin{equation}\label{eq:K1R-def}
\mathscr K_1^R(x,s):=\int_x^s(\zeta-x)^{\nu-1}\mathscr K(x,\zeta)\,d\zeta,\qquad 0\le x\le s\le b.
\end{equation}

We next consider the \corr{behavior} of the free term \(\chi_R\) near the terminal point. Since \(\mathscr K\in C^\ell(D_R)\), its expansion near \((b,b)\) produces terms involving powers of \(b-x\) and \(b-s\). A typical contribution is
$$
\int_x^b(s-x)^{\nu-1}(b-x)^p(b-s)^q\,ds=(b-x)^{p+q+\nu}\int_0^1y^{\nu-1}(1-y)^q\,dy=B(\nu,q+1)(b-x)^{p+q+\nu}.
$$
Hence the weakly singular part of \(\chi_R\) generates terminal powers of the form $(b-x)^{i+\nu}$, \(i\in\mathbb N_0\). Consequently, $\chi_R\in C(\Lambda)\cap C^\ell([0,b))$ and $|\chi_R^{(\ell)}(x)|\le C(b-x)^{\nu-\ell}$, \(0\le x<b\).

The leading weak term is obtained by replacing \(\mathscr K(x,s)\) by \(\mathscr K(b,b)\) in the singular integral:
$$
\varphi_b\int_x^b(s-x)^{\nu-1}\mathscr K(x,s)\,ds=\frac{\corr{\varphi_b}\mathscr K(b,b)}{\nu}(b-x)^\nu+\text{higher-order terms}.
$$
Hence \(\psi=\varphi'\) generally contains a terminal term proportional to \((b-x)^\nu\), and therefore \(\psi'\) generally contains a term proportional to \((b-x)^{\nu-1}\).

Next we consider the kernel \(\mathscr K_1^R\). \corr{Using the boundedness of \(\mathscr K\) on \(D_R\),} we obtain
\begin{equation}\label{eq:K1R-bound}
|\mathscr K_1^R(x,s)|\le C\int_x^s(\zeta-x)^{\nu-1}\,d\zeta=\frac{C}{\nu}(s-x)^\nu,\qquad 0\le x\le s\le b.
\end{equation}
Hence \(\mathscr K_1^R\) is continuous on \(D_R\). Thus \((x,s)\mapsto a(x)+\mathscr K_1^R(x,s)\) is a Volterra kernel with the same terminal regularity scale as the free term \(\chi_R\). \corr{Applying the standard regularity argument to} \eqref{eq:v-equation-right-regularity} up to order \(\ell\), we obtain $\psi\in C(\Lambda)\cap C^\ell([0,b))$ and $|\psi^{(\ell)}(x)|\le C(b-x)^{\nu-\ell}$, \(0\le x<b\). This proves \eqref{eq:v-regularity-mainR}.

\corr{Since} \(\psi=\varphi'\) and \eqref{eq:v-regularity-mainR}, we obtain $\varphi\in C^1(\Lambda)\cap C^{\ell+1}([0,b))$ and, for \(0\le x<b\), $|\varphi^{(\ell+1)}(x)|=|\psi^{(\ell)}(x)|\le C(b-x)^{\nu-\ell}$. This proves \eqref{eq:u-regularity-mainR} and \eqref{eq:u-derivative-boundR}.

It remains to identify the exponents in the terminal expansion. The nonsingular terms in \eqref{eq:1.1R} act on terminal monomials according to
$$
\int_x^b t(\rho)(b-\rho)^\alpha\,d\rho=\sum_{p=0}^{M}\frac{t_p}{p+\alpha+1}(b-x)^{p+\alpha+1}+O\bigl((b-x)^{M+\alpha+2}\bigr),
$$
where $t(\rho)=\sum_{p=0}^{M}t_p(b-\rho)^p+O((b-\rho)^{M+1})$. For the weakly singular term, one obtains
$$
\int_x^b\int_\rho^b(s-\rho)^{\nu-1}(b-\rho)^p(b-s)^{q+\alpha}\,ds\,d\rho=\frac{B(\nu,q+\alpha+1)}{p+q+\alpha+\nu+1}(b-x)^{p+q+\alpha+\nu+1}.
$$
Thus the weakly singular integro-differential term, together with the recovery of \(\varphi\) from \(\psi=\varphi'\), shifts the terminal exponent by \(1+\nu\), up to additional integer powers. Hence the exponents occurring in the terminal expansion of \(\varphi\) are of the form $i+k(1+\nu)$, \(i,k\in\mathbb N_0\).

\corr{Iterating this argument,} keeping all powers below \(\ell+1\), and absorbing the remainder into \(C^{\ell+1}(\Lambda)\), we obtain
$$
\varphi(x)=\sum_{(i,k)\in\mathcal I_{\nu,\ell}^{R}}\alpha_{i,k}^{R}(\nu)(b-x)^{i+k(1+\nu)}+Y_{\ell+1}^{R}(x,\nu),
$$
where $\mathcal I_{\nu,\ell}^{R}=\{(i,k)\in\mathbb N_0^2:\ i+k(1+\nu)<\ell+1\}$ and $Y_{\ell+1}^{R}(\cdot,\nu)\in C^{\ell+1}(\Lambda)$. This proves \eqref{eq:u-expansion-mainR}.

Differentiating \eqref{eq:u-expansion-mainR} gives the corresponding terminal structure of \(\psi=\varphi'\). In particular, the term \((b-x)^{1+\nu}\) in \(\varphi\) generates the factor \((b-x)^\nu\) in \(\psi\), and hence the factor \((b-x)^{\nu-1}\) in \(\psi'\). Since \(0<\nu<1\), this shows that, in general, $\varphi\in C^1(\Lambda)$ and $\varphi\notin C^2(\Lambda)$.
\end{proof}

\section{Backward graded mesh and fractional interpolation}
\label{sec2}

Let \(\mathscr{M}\in\mathbb{N}\) and \(q\ge1\). We introduce the terminally graded mesh
\[
\mathscr{P}_{\mathscr{M}}
:=
\left\{
x_j
=
b\left(1-\left(\frac{\mathscr{M}-j}{\mathscr{M}}\right)^q\right),
\quad j=0,1,\ldots,\mathscr{M}
\right\}.
\]
For \(q=1\), the mesh is uniform; for \(q>1\), the mesh points are clustered
toward the terminal endpoint \(b\), where the singular \corr{behavior} described in
Theorem~\ref{thm:1.1R} may occur.

Let \(0<\mu<1\). For \(j=1,\ldots,\mathscr{M}\), set
\(\varrho_j=(x_{j-1},x_j]\) and \(h_j=x_j-x_{j-1}\), and define the transformed
mesh by \(\rho_j=b-(b-x_j)^\mu\),
\(\varrho_{j,\mu}=(\rho_{j-1},\rho_j]\), and
\(h_{j,\mu}=\rho_j-\rho_{j-1}\). We also write
\(h=\max_{1\le j\le\mathscr{M}}h_j\).

The local fractional polynomial space is defined by
\cite{Zaky2026FractionalBackwardSpectral}
\[
P_n^\mu
:=
\operatorname{span}\{1,(b-x)^\mu,\ldots,(b-x)^{n\mu}\},
\qquad n\in\mathbb{N}_0 .
\]
For a fixed positive integer \(\flat\), define
\[
S_\flat^\mu(\mathscr{P}_{\mathscr{M}})
:=
\{f(x):f|_{\varrho_j}\in P_{\flat-1}^\mu,\quad
j=1,\ldots,\mathscr{M}\},
\]
and
\[
\widehat S_\flat^\mu(\mathscr{P}_{\mathscr{M}})
:=
\{f(\rho):f|_{\varrho_{j,\mu}}\in P_{\flat-1}^1,\quad
j=1,\ldots,\mathscr{M}\}.
\]

Let \(0\le\xi_1<\cdots<\xi_\flat\le1\) be the collocation parameters. On
\(\varrho_{j,\mu}\), set
\(\eta_{j,l}=\rho_{j-1}+\xi_lh_{j,\mu}\),
\(j=1,\ldots,\mathscr{M}\), \(l=1,\ldots,\flat\).
Define \(\tau(\rho):=b-(b-\rho)^{1/\mu}\). The corresponding collocation
points in the original variable are
\begin{equation}\label{eq:2.1R}
X_j
:=
\{x_{j,l}:x_{j,l}=\tau(\eta_{j,l}),\ l=1,\ldots,\flat\},
\qquad j=1,\ldots,\mathscr{M}.
\end{equation}

The fractional backward Lagrange basis on \(\varrho_j\) is given by
\cite{Zaky2026FractionalBackwardSpectral}:
\[
L_{j,l}^{\mu}(x)
:=
\prod_{\substack{i=1\\ i\ne l}}^{\flat}
\frac{(b-x)^{\mu}-(b-x_{j,i})^{\mu}}
     {(b-x_{j,l})^{\mu}-(b-x_{j,i})^{\mu}},
\qquad x\in\varrho_j,
\]
for \(j=1,\ldots,\mathscr{M}\) and \(l=1,\ldots,\flat\). Hence, for
\(f\in C(\Lambda)\), the interpolation operator
\(I_{\flat,\mathscr{M}}^\mu:C(\Lambda)\to
S_\flat^\mu(\mathscr{P}_{\mathscr{M}})\) is defined locally by
\[
\left(I_{\flat,\mathscr{M}}^\mu f\right)(x)
:=
\sum_{l=1}^{\flat}L_{j,l}^\mu(x)f(x_{j,l}),
\qquad x\in\varrho_j .
\]

The transformation \(x=\tau(\rho)\) reduces the fractional interpolation to
ordinary polynomial interpolation. Indeed, for \(x\in\varrho_j\),
\begin{equation}\label{eq:2.2R}
L_{j,l}^\mu(\tau(\rho))
=
\prod_{\substack{i=1\\ i\ne l}}^{\flat}
\frac{\rho-\eta_{j,i}}{\eta_{j,l}-\eta_{j,i}}
=:L_{j,l}(\rho),
\qquad \rho\in\varrho_{j,\mu}.
\end{equation}
Here \(L_{j,l}(\rho)\) is the standard Lagrange basis associated with
\(\{\eta_{j,l}\}_{l=1}^{\flat}\) on \(\varrho_{j,\mu}\).

For \(f\in C[b-b^\mu,b]\), define the interpolation operator
\(Q_{\flat,\mathscr{M}}^\mu:C[b-b^\mu,b]\to
\widehat S_\flat^\mu(\mathscr{P}_{\mathscr{M}})\) by
\[
\left(Q_{\flat,\mathscr{M}}^\mu f\right)(\rho)
:=
\sum_{l=1}^{\flat}L_{j,l}(\rho)f(\eta_{j,l}),
\qquad \rho\in\varrho_{j,\mu}.
\]

It follows from \eqref{eq:2.2R} that, when \(x=\tau(\rho)\),
\begin{align}
\left(I_{\flat,\mathscr{M}}^\mu\psi\right)(x)\big|_{x\in\varrho_j}
&=
\sum_{l=1}^{\flat}L_{j,l}^\mu(x)\psi(x_{j,l})
=
\sum_{l=1}^{\flat}L_{j,l}(\rho)\psi(\tau(\eta_{j,l}))
\nonumber\\
&=
\left(Q_{\flat,\mathscr{M}}^\mu\widehat{\psi}\right)(\rho)
\big|_{\rho\in\varrho_{j,\mu}},
\label{eq:2.3R}
\end{align}
where \(\widehat{\psi}(\rho):=\psi(\tau(\rho))\).

We next present two mesh estimates that will be used repeatedly in the error
analysis.

\begin{lemma}\label{lem:2.1R}
Let \(\flat\in\mathbb{N}\) \corr{and \(\mathscr{M}\ge2\)}. Then there exists
a constant \(C\), independent of \(\mathscr{M}\) and \(j\), such that, for
\(1\le j\le\mathscr{M}\),
\[
h_{j,\mu}^{\flat}(b-x_{j-1})^{\mu\varsigma}
\le
C\mathscr{M}^{-q\mu(\flat+\varsigma)}
(\mathscr{M}-j+1)^{q\mu(\flat+\varsigma)-\flat}
\le
C\mathscr{M}^{-\min\{q\mu(\flat+\varsigma),\flat\}},
\qquad \varsigma\in\mathbb{R},
\]
and
\[
h_{j,\mu}^{\flat}
\left|\ln(b-x_{j-1})^\mu\right|
\le
C\mathscr{M}^{-q\mu\flat}
(\mathscr{M}-j+1)^{q\mu\flat-\flat}\ln\mathscr{M}
\le
C\mathscr{M}^{-\min\{q\mu\flat,\flat\}}\ln\mathscr{M}.
\]
\end{lemma}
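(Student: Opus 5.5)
The proof is a direct computation in the index $k:=\mathscr{M}-j+1\in\{1,\ldots,\mathscr{M}\}$, using the explicit form of the graded mesh. The plan is to express both $h_{j,\mu}$ and $b-x_{j-1}$ as powers of $k$ and $\mathscr{M}$. After that, each inequality reduces to an elementary bound on a difference of powers, followed by a case split on the sign of an exponent.

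\textbf{Step 1: explicit formulas.} From $x_j=b\bigl(1-((\mathscr{M}-j)/\mathscr{M})^q\bigr)$ we get $b-x_j=b\,\mathscr{M}^{-q}(\mathscr{M}-j)^q$. Since $\rho_j=b-(b-x_j)^\mu$, it follows that
\[
h_{j,\mu}=(b-x_{j-1})^\mu-(b-x_j)^\mu=b^\mu\mathscr{M}^{-q\mu}\bigl(k^{q\mu}-(k-1)^{q\mu}\bigr),
\]
and also $(b-x_{j-1})^{\mu\varsigma}=b^{\mu\varsigma}\mathscr{M}^{-q\mu\varsigma}k^{q\mu\varsigma}$.

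\textbf{Step 2: the difference of powers.} This is the only point needing care. The claim is $k^{q\mu}-(k-1)^{q\mu}\le Ck^{q\mu-1}$ for all $k\ge1$, with $C$ depending only on $q\mu$.
\begin{itemize}
\item For $k=1$ both sides equal $1$.
\item For $k\ge2$, the mean value theorem gives the value $q\mu\,\theta^{q\mu-1}$ for some $\theta\in[k-1,k]$.
\item If $q\mu\ge1$, then $\theta^{q\mu-1}\le k^{q\mu-1}$.
\item If $q\mu<1$, then $\theta^{q\mu-1}\le(k-1)^{q\mu-1}\le 2^{1-q\mu}k^{q\mu-1}$, because $k-1\ge k/2$.
\end{itemize}
Raising to the power $\flat$ yields $h_{j,\mu}^\flat\le C\mathscr{M}^{-q\mu\flat}k^{(q\mu-1)\flat}$. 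Multiplying by the formula for $(b-x_{j-1})^{\mu\varsigma}$ gives the first inequality of the lemma, with $k=\mathscr{M}-j+1$.

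\textbf{Step 3: the uniform bound.} Set $e:=q\mu(\flat+\varsigma)-\flat$ and use $1\le k\le\mathscr{M}$.
\begin{itemize}
\item If $e\ge0$, then $k^e\le\mathscr{M}^e$, so the bound becomes $C\mathscr{M}^{-\flat}$.
\item If $e<0$, then $k^e\le1$, so the bound becomes $C\mathscr{M}^{-q\mu(\flat+\varsigma)}$.
\end{itemize}
In either case we obtain $C\mathscr{M}^{-\min\{q\mu(\flat+\varsigma),\flat\}}$.

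\textbf{Step 4: the logarithmic estimate.} Write
\[
\bigl|\ln(b-x_{j-1})^\mu\bigr|=\mu\bigl|\ln b+q\ln(k/\mathscr{M})\bigr|\le \mu|\ln b|+\mu q\ln\mathscr{M}\le C\ln\mathscr{M}.
\]
The last step uses $\mathscr{M}\ge2$, so that $\ln\mathscr{M}\ge\ln2>0$ absorbs the constant term. Combining this with the bound on $h_{j,\mu}^\flat$ from Step 2, taken with $\varsigma=0$, and repeating the case split of Step 3 with $e=q\mu\flat-\flat$, gives the second chain of inequalities.
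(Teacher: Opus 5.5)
Your proposal is correct and follows the paper's proof essentially step by step: the same explicit formulas for $h_{j,\mu}$ and $(b-x_{j-1})^{\mu\varsigma}$ in terms of $\mathscr{M}-j+1$, with the case $j=\mathscr{M}$ (your $k=1$) separated; a mean value bound on $(\mathscr{M}-j+1)^{q\mu}-(\mathscr{M}-j)^{q\mu}$; maximization over $1\le\mathscr{M}-j+1\le\mathscr{M}$; and the bound $\bigl|\ln(b-x_{j-1})^\mu\bigr|\le C\ln\mathscr{M}$. Your explicit handling of the case $q\mu<1$ via $k-1\ge k/2$ fills in a detail that the paper's bare appeal to the mean value theorem leaves implicit.
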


\begin{proof}
For \(j=\mathscr{M}\), the identities
\(h_{\mathscr{M},\mu}^{\flat}
=b^{\mu\flat}\mathscr{M}^{-q\mu\flat}\) and
\((b-x_{\mathscr{M}-1})^{\mu\varsigma}
=b^{\mu\varsigma}\mathscr{M}^{-q\mu\varsigma}\)
give the first bound immediately.

Let \(1\le j\le\mathscr{M}-1\). From the definitions of \(x_j\) and
\(\rho_j\),
\[
h_{j,\mu}^{\flat}
=
b^{\mu\flat}\mathscr{M}^{-q\mu\flat}
\left((\mathscr{M}-j+1)^{q\mu}
-(\mathscr{M}-j)^{q\mu}\right)^\flat,
\]
and
\[
(b-x_{j-1})^{\mu\varsigma}
=
b^{\mu\varsigma}\mathscr{M}^{-q\mu\varsigma}
(\mathscr{M}-j+1)^{q\mu\varsigma}.
\]
Hence
\[
h_{j,\mu}^{\flat}(b-x_{j-1})^{\mu\varsigma}
=
C\mathscr{M}^{-q\mu(\flat+\varsigma)}
(\mathscr{M}-j+1)^{q\mu\varsigma}
\left((\mathscr{M}-j+1)^{q\mu}
-(\mathscr{M}-j)^{q\mu}\right)^\flat.
\]
By the \corr{mean value theorem},
\[
(\mathscr{M}-j+1)^{q\mu}
-(\mathscr{M}-j)^{q\mu}
\le
C(\mathscr{M}-j+1)^{q\mu-1}.
\]
Therefore,
\[
h_{j,\mu}^{\flat}(b-x_{j-1})^{\mu\varsigma}
\le
C\mathscr{M}^{-q\mu(\flat+\varsigma)}
(\mathscr{M}-j+1)^{q\mu(\flat+\varsigma)-\flat}.
\]
Taking the maximum over \(1\le\mathscr{M}-j+1\le\mathscr{M}\) gives
\[
h_{j,\mu}^{\flat}(b-x_{j-1})^{\mu\varsigma}
\le
C\mathscr{M}^{-\min\{q\mu(\flat+\varsigma),\flat\}}.
\]

For the logarithmic estimate, note that
\[
\left|\ln(b-x_{j-1})^\mu\right|
=
\left|
\ln\left(
b^\mu
\left(\frac{\mathscr{M}-j+1}{\mathscr{M}}\right)^{q\mu}
\right)
\right|
\le C\ln\mathscr{M}.
\]
Combining this with the first estimate for \(\varsigma=0\) yields
\[
h_{j,\mu}^{\flat}
\left|\ln(b-x_{j-1})^\mu\right|
\le
C\mathscr{M}^{-q\mu\flat}
(\mathscr{M}-j+1)^{q\mu\flat-\flat}\ln\mathscr{M}
\le
C\mathscr{M}^{-\min\{q\mu\flat,\flat\}}\ln\mathscr{M}.
\]
\end{proof}

We now introduce the regularity class used for functions with terminal
fractional \corr{behavior}; see
\cite{Zaky2026FractionalBackwardSpectral}. Let \(\flat\in\mathbb{N}\) and
\(\gamma>0\). The space
\(C^{\flat,1-\gamma}[b-b^\mu,b)\) consists of all real-valued functions
\(f\in C[b-b^\mu,b]\cap C^\flat[b-b^\mu,b)\) such that,
\corr{for some constants \(c_k=c_k(f)\),}
\[
|f^{(k)}(\rho)|
\le
c_k
\begin{cases}
1, & k<\gamma,\\
1+|\ln(b-\rho)|, & k=\gamma,\\
(b-\rho)^{\gamma-k}, & k>\gamma,
\end{cases}
\qquad b-b^\mu\le\rho<b,\quad k=0,\ldots,\flat.
\]
For instance, if \(f(\rho)=(b-\rho)^\gamma\widetilde f(\rho)\), with
\(\widetilde f\in C^\flat[b-b^\mu,b]\), then
\(f\in C^{\flat,1-\gamma}[b-b^\mu,b)\).

The following lemma gives the local interpolation error for
\(Q_{\flat,\mathscr{M}}^\mu\) on the transformed mesh.

\begin{lemma}\label{lem:2.2R}
For each \(f\in C^{\flat,1-\gamma}[b-b^\mu,b)\), there exists a constant
\(C=C(f)\), independent of \(\mathscr{M}\) and \(j\), such that, for
\(j=1,\ldots,\mathscr{M}\),
\[
\max_{\rho_{j-1}\le\rho\le\rho_j}
|f(\rho)-Q_{\flat,\mathscr{M}}^\mu f(\rho)|
\le
Ch_{j,\mu}^{\flat}
\begin{cases}
1, & \flat<\gamma,\\[1mm]
1+\left|\ln(b-\rho_{j-1})\right|, & \flat=\gamma,\\[1mm]
(b-\rho_{j-1})^{\gamma-\flat}, & \flat>\gamma.
\end{cases}
\]
Consequently,
\[
\max_{\rho_{j-1}\le\rho\le\rho_j}
|f(\rho)-Q_{\flat,\mathscr{M}}^\mu f(\rho)|
\le
C
\begin{cases}
\mathscr{M}^{-\min\{q\mu\flat,\flat\}}, & \flat<\gamma,\\[1mm]
\mathscr{M}^{-\min\{q\mu\flat,\flat\}}\ln\mathscr{M},
& \flat=\gamma,\\[1mm]
\mathscr{M}^{-\min\{q\mu\gamma,\flat\}}, & \flat>\gamma.
\end{cases}
\]
In particular, for \(j=1\),
\begin{equation}\label{eq:2.4R}
\max_{\rho_0\le\rho\le\rho_1}
|f(\rho)-Q_{\flat,\mathscr{M}}^\mu f(\rho)|
\le
C\mathscr{M}^{-\flat}.
\end{equation}
\end{lemma}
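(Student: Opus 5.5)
We would start from the classical Lagrange interpolation remainder on each transformed subinterval \(\varrho_{j,\mu}\). Since \(Q_{\flat,\mathscr{M}}^\mu f\) interpolates \(f\) at the \(\flat\) points \(\eta_{j,l}\in[\rho_{j-1},\rho_j]\), for \(\rho\in[\rho_{j-1},\rho_j]\) we have
\[
f(\rho)-Q_{\flat,\mathscr{M}}^\mu f(\rho)=\frac{f^{(\flat)}(\zeta_j)}{\flat!}\prod_{l=1}^{\flat}(\rho-\eta_{j,l}),
\qquad \zeta_j\in(\rho_{j-1},\rho_j),
\]
and \(|\prod_l(\rho-\eta_{j,l})|\le h_{j,\mu}^\flat\). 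This representation is valid whenever \(f\in C^\flat[\rho_{j-1},\rho_j]\), which holds for \(j\le\mathscr{M}-1\) because \(\rho_j<b\) there. The last interval \(j=\mathscr{M}\), which touches \(b\), needs separate treatment.

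For \(1\le j\le\mathscr{M}-1\), the next step is to bound \(|f^{(\flat)}(\zeta_j)|\) using the defining inequalities of \(C^{\flat,1-\gamma}[b-b^\mu,b)\).
\begin{itemize}
\item If \(\flat<\gamma\), we get a constant bound.
\item If \(\flat=\gamma\), we get \(1+|\ln(b-\zeta_j)|\). Since \(b-\zeta_j\ge b-\rho_j\), and on the graded mesh \(b-\rho_j\) and \(b-\rho_{j-1}\) are comparable up to \(\ln\mathscr{M}\)-harmless factors, this is \(\le C(1+|\ln(b-\rho_{j-1})|)\) up to a constant. Both sides are in any case bounded by \(C\ln\mathscr{M}\), which is all the global estimate needs.
\item If \(\flat>\gamma\), we get \((b-\zeta_j)^{\gamma-\flat}\le(b-\rho_j)^{\gamma-\flat}\). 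The ratio \((b-\rho_{j-1})/(b-\rho_j)=\bigl((\mathscr{M}-j+1)/(\mathscr{M}-j)\bigr)^{q\mu}\le 2^{q\mu}\) for \(j\le\mathscr{M}-1\), so this is \(\le C(b-\rho_{j-1})^{\gamma-\flat}\).
\end{itemize}
This gives the local bound for interior intervals. Note that \(b-\rho_j=(b-x_j)^\mu\), which connects directly with Lemma~\ref{lem:2.1R}.

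The main obstacle is the terminal interval \(j=\mathscr{M}\), where \(f^{(\flat)}\) may blow up and the remainder formula is unavailable. There we would write \(f-Q f=(f-p)-Q(f-p)\) for \(p=f(b)\), the constant Taylor polynomial at \(b\). By Lebesgue-constant stability of \(Q\) on a reference interval,
\[
|f-Qf|\le C\max_{[\rho_{\mathscr{M}-1},b]}|f-f(b)|.
\]
For \(\gamma<1\), integrating \(|f'|\le C(b-\rho)^{\gamma-1}\) gives \(|f-f(b)|\le C(b-\rho_{\mathscr{M}-1})^\gamma=Ch_{\mathscr{M},\mu}^\gamma\). This matches \(h_{\mathscr{M},\mu}^\flat(b-\rho_{\mathscr{M}-1})^{\gamma-\flat}\), since \(h_{\mathscr{M},\mu}=b-\rho_{\mathscr{M}-1}\). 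For larger \(\gamma\), we would use the Taylor polynomial of degree \(\lceil\gamma\rceil-1\) (at most \(\flat-1\), so it is reproduced by \(Q\)) and integrate the bound on the next derivative. This yields \(Ch_{\mathscr{M},\mu}^{\min(\gamma,\flat)}\), with a log factor when \(\gamma=\flat\); in the case \(\flat<\gamma\) all derivatives up to order \(\flat\) are bounded, so the plain remainder still applies. Hence the local estimate holds for all \(j\).

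For the global estimates we would insert \(b-\rho_{j-1}=(b-x_{j-1})^\mu\) and apply Lemma~\ref{lem:2.1R}.
\begin{itemize}
\item For \(\flat<\gamma\), take \(\varsigma=0\), giving \(\mathscr{M}^{-\min\{q\mu\flat,\flat\}}\).
\item For \(\flat=\gamma\), use the logarithmic part of Lemma~\ref{lem:2.1R}.
\item For \(\flat>\gamma\), take \(\varsigma=\gamma-\flat\), giving \(\mathscr{M}^{-\min\{q\mu\gamma,\flat\}}\).
\end{itemize}
Finally, for \(j=1\) the weight satisfies \(b-\rho_0=b^\mu\), and \(h_{1,\mu}\le C\mathscr{M}^{-1}\) by the mean value estimate used in Lemma~\ref{lem:2.1R} (with \(\mathscr{M}-j+1=\mathscr{M}\)). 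This gives \eqref{eq:2.4R}.
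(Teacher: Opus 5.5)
Your interior-interval argument ($1\le j\le\mathscr{M}-1$) is correct, and it usefully spells out the comparability $b-\rho_j\ge 2^{-q\mu}(b-\rho_{j-1})$, which the paper's proof uses without stating it. Your passage to the global bounds and the case $j=1$ are also correct. Your terminal-interval argument works for $\flat<\gamma$, for $\flat=\gamma$, and for non-integer $\gamma<\flat$.

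\textbf{The gap.} It is on the terminal interval $j=\mathscr{M}$ when $\gamma$ is an integer with $\gamma<\flat$. Your Taylor polynomial at $b$ then has degree $\lceil\gamma\rceil-1=\gamma-1$, so the remainder is governed by $f^{(\gamma)}$. In $C^{\flat,1-\gamma}$ this derivative only satisfies the logarithmic bound $c_\gamma(1+|\ln(b-\rho)|)$. Integrating it gives $Ch_{\mathscr{M},\mu}^{\gamma}(1+|\ln h_{\mathscr{M},\mu}|)$, not the required $Ch_{\mathscr{M},\mu}^{\flat}(b-\rho_{\mathscr{M}-1})^{\gamma-\flat}=Ch_{\mathscr{M},\mu}^{\gamma}$. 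You cannot raise the degree to $\gamma$, because $f^{(\gamma)}(b)$ need not exist.

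The loss comes from expanding at $b$, not from a crude estimate. Take $\gamma=1$, $\flat=2$, $f(\rho)=(b-\rho)\ln(b-\rho)$. For small $h_{\mathscr{M},\mu}$ one has $\max_{\varrho_{\mathscr{M},\mu}}|f-f(b)|=h_{\mathscr{M},\mu}|\ln h_{\mathscr{M},\mu}|$. On the other hand, with $t=(b-\rho)/h_{\mathscr{M},\mu}$ one has $f=h_{\mathscr{M},\mu}\ln h_{\mathscr{M},\mu}\,t+h_{\mathscr{M},\mu}\,t\ln t$. The first term is affine and reproduced, so $f-Q_{\flat,\mathscr{M}}^\mu f=O(h_{\mathscr{M},\mu})$. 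Since $h_{\mathscr{M},\mu}=b^\mu\mathscr{M}^{-q\mu}$, your argument leaves a spurious factor $\ln\mathscr{M}$ in the global estimate for $\flat>\gamma$ whenever $q\mu\gamma\le\flat$.

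\textbf{The fix.} Use the paper's choice of expansion point. After the same Lebesgue-constant reduction, let $p$ be the Taylor polynomial of degree $\flat-1$ at the \emph{left} endpoint $\rho_{\mathscr{M}-1}$, and use the integral remainder. Since $0\le\rho-s\le b-s$, for $\flat>\gamma$
\[
|f(\rho)-p(\rho)|\le\frac{c_\flat}{(\flat-1)!}\int_{\rho_{\mathscr{M}-1}}^{\rho}(\rho-s)^{\flat-1}(b-s)^{\gamma-\flat}\,ds\le\frac{c_\flat}{(\flat-1)!}\int_{\rho_{\mathscr{M}-1}}^{b}(b-s)^{\gamma-1}\,ds=\frac{c_\flat\,h_{\mathscr{M},\mu}^{\gamma}}{\gamma\,(\flat-1)!}.
\]
This holds whether or not $\gamma$ is an integer. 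For $\flat=\gamma$, the bound $(\rho-s)^{\flat-1}\le h_{\mathscr{M},\mu}^{\flat-1}$ gives $Ch_{\mathscr{M},\mu}^{\flat}(1+|\ln h_{\mathscr{M},\mu}|)$. The formula extends to $\rho=b$ by continuity.

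The paper applies this single device (Lebesgue constant plus Taylor expansion at $\rho_{j-1}$ with integral remainder) on every interval, so interior and terminal intervals are handled uniformly. With this replacement on the last interval, your proof is complete.
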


\begin{proof}
Fix \(j\). Since \(Q_{\flat,\mathscr{M}}^\mu\) is the interpolation operator
on \(\varrho_{j,\mu}\) and reproduces \(P_{\flat-1}^{1}\), for every
\(p\in P_{\flat-1}^{1}\),
\[
f-Q_{\flat,\mathscr{M}}^\mu f
=
(f-p)-Q_{\flat,\mathscr{M}}^\mu(f-p).
\]
\corr{Since the interpolation nodes are affine images of the fixed
parameters \(\{\xi_l\}_{l=1}^{\flat}\), the corresponding Lebesgue constant
is independent of \(j\) and \(\mathscr{M}\).} Thus,
\[
\max_{\rho\in\varrho_{j,\mu}}
|f(\rho)-Q_{\flat,\mathscr{M}}^\mu f(\rho)|
\le
C\max_{\rho\in\varrho_{j,\mu}}|f(\rho)-p(\rho)|.
\]
Taking \(p\) as the Taylor polynomial of degree \(\flat-1\) at
\(\rho_{j-1}\), we obtain
\[
f(\rho)-p(\rho)
=
\frac{1}{(\flat-1)!}
\int_{\rho_{j-1}}^\rho
(\rho-s)^{\flat-1}f^{(\flat)}(s)\,ds.
\]
Using the definition of \(C^{\flat,1-\gamma}[b-b^\mu,b)\),
\corr{and estimating the terminal interval directly when
\(j=\mathscr{M}\) and \(\flat\ge\gamma\),} this gives
\[
|f(\rho)-p(\rho)|
\le
Ch_{j,\mu}^{\flat}
\begin{cases}
1, & \flat<\gamma,\\[1mm]
1+|\ln(b-\rho_{j-1})|, & \flat=\gamma,\\[1mm]
(b-\rho_{j-1})^{\gamma-\flat}, & \flat>\gamma.
\end{cases}
\]
This proves the local estimate.

Since \(b-\rho_{j-1}=(b-x_{j-1})^\mu\), the three
\(\mathscr{M}\)-dependent estimates follow from Lemma~\ref{lem:2.1R},
with \(\varsigma=0\) in the first case, with the logarithmic estimate in
the second case, and with \(\varsigma=\gamma-\flat\) in the third case.
\corr{Since \(b-\rho_1\) is bounded away from zero and
\(h_{1,\mu}\le C\mathscr{M}^{-1}\),} estimate \eqref{eq:2.4R} follows by
taking \(j=1\).
\end{proof}

We shall also use the following weighted version of the preceding estimate.

\begin{lemma}\label{lem:2.3R}
For each \(f\in C^{\flat,1-\gamma}[b-b^\mu,b)\), there exists a constant
\(C=C(f)\), independent of \(\mathscr{M}\) and \(j\), such that, for
\(j=1,\ldots,\mathscr{M}\),
\[
h_j
\max_{\rho_{j-1}\le\rho\le\rho_j}
|f(\rho)-Q_{\flat,\mathscr{M}}^\mu f(\rho)|
\le
C
\begin{cases}
\mathscr{M}^{-\min\{q(\mu\flat+1),\flat+1\}}, & \flat<\gamma,\\[1mm]
\mathscr{M}^{-\min\{q(\mu\flat+1),\flat+1\}}\ln\mathscr{M},
& \flat=\gamma,\\[1mm]
\mathscr{M}^{-\min\{q(\mu\gamma+1),\flat+1\}}, & \flat>\gamma.
\end{cases}
\]
\end{lemma}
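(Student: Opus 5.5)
The plan is to combine the local estimate of Lemma~\ref{lem:2.2R} with an explicit bound for the physical step size \(h_j\) on the graded mesh. From the definition of \(x_j\) we have
\[
h_j=b\mathscr{M}^{-q}\left((\mathscr{M}-j+1)^q-(\mathscr{M}-j)^q\right)\le C\mathscr{M}^{-q}(\mathscr{M}-j+1)^{q-1},
\]
by the mean value theorem, since \(q\ge1\). For \(j=\mathscr{M}\) this is an identity, \(h_{\mathscr{M}}=b\mathscr{M}^{-q}\).

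Next, multiply this bound by the local estimate of Lemma~\ref{lem:2.2R}. Before taking the maximum over \(j\), use the first, \(j\)-dependent bound of Lemma~\ref{lem:2.1R}.

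\emph{Case \(\flat<\gamma\).} Here \(h_{j,\mu}^\flat\le C\mathscr{M}^{-q\mu\flat}(\mathscr{M}-j+1)^{q\mu\flat-\flat}\). Writing \(m:=\mathscr{M}-j+1\in\{1,\ldots,\mathscr{M}\}\), we get
\[
h_j\max|f-Q f|\le C\mathscr{M}^{-q(\mu\flat+1)}\,m^{q(\mu\flat+1)-(\flat+1)}.
\]
The maximum over \(1\le m\le\mathscr{M}\) is attained at \(m=1\) when the exponent is negative, giving \(C\mathscr{M}^{-q(\mu\flat+1)}\). It is attained at \(m=\mathscr{M}\) otherwise, giving \(C\mathscr{M}^{-(\flat+1)}\). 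Together these give \(C\mathscr{M}^{-\min\{q(\mu\flat+1),\flat+1\}}\).

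\emph{Case \(\flat=\gamma\).} The logarithmic factor \(1+|\ln(b-\rho_{j-1})|=1+|\ln(b-x_{j-1})^\mu|\le C\ln\mathscr{M}\) is pulled out exactly as in Lemma~\ref{lem:2.1R}. The same computation then gives the bound with an extra factor \(\ln\mathscr{M}\).

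\emph{Case \(\flat>\gamma\).} Use \(\varsigma=\gamma-\flat\) in Lemma~\ref{lem:2.1R}, which gives \(h_{j,\mu}^\flat(b-x_{j-1})^{\mu(\gamma-\flat)}\le C\mathscr{M}^{-q\mu\gamma}m^{q\mu\gamma-\flat}\). Multiplying by \(h_j\) yields \(C\mathscr{M}^{-q(\mu\gamma+1)}m^{q(\mu\gamma+1)-(\flat+1)}\). Maximizing over \(m\) as before gives \(C\mathscr{M}^{-\min\{q(\mu\gamma+1),\flat+1\}}\).

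The main obstacle is the terminal interval \(j=\mathscr{M}\) in the cases \(\flat\ge\gamma\). There the local estimate of Lemma~\ref{lem:2.2R} was obtained by direct estimation rather than from the Taylor bound, because \(b-\rho_{\mathscr{M}}=0\). I will check that the \(j\)-dependent form of Lemma~\ref{lem:2.2R} still holds there with \(b-\rho_{\mathscr{M}-1}=b^\mu\mathscr{M}^{-q\mu}\). Concretely, on \(\varrho_{\mathscr{M},\mu}\) I would bound \(|f-Qf|\) by \(C\max_{\varrho_{\mathscr{M},\mu}}|f-f(b)|\le C h_{\mathscr{M},\mu}^{\min\{\gamma,\flat\}}\), up to a log factor when \(\flat=\gamma\). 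This uses \(|f'(\rho)|\le C(b-\rho)^{\gamma-1}\), or boundedness of \(f'\) if \(\gamma>1\), integrated from \(\rho\) to \(b\), together with the fact that the Lebesgue constant is uniform. This gives \(C\mathscr{M}^{-q\mu\gamma}\) on the terminal element. Multiplying by \(h_{\mathscr{M}}=b\mathscr{M}^{-q}\) gives \(\mathscr{M}^{-q(\mu\gamma+1)}\), which is consistent with the claimed bound, i.e.\ the \(m=1\) term above.

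Finally, I take the maximum of the case-wise bounds over \(j\). The constants depend only on \(f\), \(\flat\), \(q\), \(\mu\), \(b\) and the parameters \(\xi_l\), so they are independent of \(\mathscr{M}\) and \(j\), as claimed.
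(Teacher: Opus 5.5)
Your main line is the paper's own proof. You bound $h_j\le C\mathscr{M}^{-q}(\mathscr{M}-j+1)^{q-1}$ by the mean value theorem and multiply by the local estimate of Lemma~\ref{lem:2.2R}, written in the $j$-dependent form of Lemma~\ref{lem:2.1R} (with $\varsigma=0$, the logarithmic bound, or $\varsigma=\gamma-\flat$). You then maximize $m^{q(\mu\flat+1)-\flat-1}$, respectively $m^{q(\mu\gamma+1)-\flat-1}$, over $1\le m\le\mathscr{M}$. That part is correct and complete.

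The one incorrect step is your extra check of the terminal element. You do not need it, because Lemma~\ref{lem:2.2R} is already stated and proved for $j=\mathscr{M}$. As written, the check fails for $\gamma>1$, which is exactly the case used later in Corollary~\ref{cor:5.1R} (e.g.\ $\gamma=1/\mu=\sqrt3$ with $\flat=2,3$). Subtracting only the constant $f(b)$ and using boundedness of $f'$ gives $\max_{\varrho_{\mathscr{M},\mu}}|f-f(b)|\le Ch_{\mathscr{M},\mu}$, not $Ch_{\mathscr{M},\mu}^{\gamma}$. For instance, $f(\rho)=(b-\rho)+(b-\rho)^{3/2}$ lies in $C^{\flat,1-\gamma}[b-b^\mu,b)$ with $\flat=2$, $\gamma=3/2$, yet $\max_{\varrho_{\mathscr{M},\mu}}|f-f(b)|\ge h_{\mathscr{M},\mu}$.

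If you want the terminal case self-contained, subtract instead the Taylor polynomial of degree $\flat-1$ at $\rho_{\mathscr{M}-1}$, which $Q_{\flat,\mathscr{M}}^\mu$ reproduces. For $\rho_{\mathscr{M}-1}\le s\le\rho\le b$ and $\flat>\gamma$ you have $(\rho-s)^{\flat-1}|f^{(\flat)}(s)|\le C(b-s)^{\gamma-1}$. This integrates to $Ch_{\mathscr{M},\mu}^{\gamma}=C\mathscr{M}^{-q\mu\gamma}$. For $\flat=\gamma$ the same argument gives $Ch_{\mathscr{M},\mu}^{\flat}(1+|\ln h_{\mathscr{M},\mu}|)$. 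With that correction, or simply by citing Lemma~\ref{lem:2.2R} for $j=\mathscr{M}$, your proof goes through.
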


\begin{proof}
By the \corr{mean value theorem} applied to the graded mesh,
\[
h_j
=
b\mathscr{M}^{-q}
\left((1+\mathscr{M}-j)^q-(\mathscr{M}-j)^q\right)
\le
C\mathscr{M}^{-q}(1+\mathscr{M}-j)^{q-1}.
\]
Combining this with Lemmas~\ref{lem:2.2R} and~\ref{lem:2.1R} gives
\[
h_j
\max_{\rho_{j-1}\le\rho\le\rho_j}
|f(\rho)-Q_{\flat,\mathscr{M}}^\mu f(\rho)|
\le
C
\begin{cases}
\mathscr{M}^{-q(\mu\flat+1)}
(\mathscr{M}-j+1)^{q(\mu\flat+1)-\flat-1},
& \flat<\gamma,\\[1mm]
\mathscr{M}^{-q(\mu\flat+1)}
(\mathscr{M}-j+1)^{q(\mu\flat+1)-\flat-1}\ln\mathscr{M},
& \flat=\gamma,\\[1mm]
\mathscr{M}^{-q(\mu\gamma+1)}
(\mathscr{M}-j+1)^{q(\mu\gamma+1)-\flat-1},
& \flat>\gamma.
\end{cases}
\]
Taking the maximum over \(1\le1+\mathscr{M}-j\le\mathscr{M}\) yields the
asserted bounds.
\end{proof}

\section{Fractional backward collocation method}
\label{sec3}

We now construct the fractional backward collocation method for
\eqref{eq:1.1R}. The method approximates \(\psi=\varphi'\) in
\(S_\flat^\mu(\mathscr{P}_{\mathscr{M}})\). The approximation \(\varphi_h\) is then recovered
from the terminal value, so that the computation proceeds from \(x_{\mathscr{M}}=b\)
backward to \(x_0=0\). \corr{We set \(\varphi_h(x_{\mathscr{M}})=\varphi_b\).}

On each interval \(\varrho_j\), write
\begin{equation}\label{eq:3.1aR}
\psi_h(x)
=
\sum_{k=1}^{\flat}Y_{j,k}L_{j,k}^{\mu}(x),
\qquad
Y_{j,k}:=\psi_h(x_{j,k}).
\end{equation}
The corresponding approximation of \(\varphi\) is defined by
\begin{equation}\label{eq:3.1bR}
\varphi_h(x)
=
\varphi_h(x_j)-\int_x^{x_j}\psi_h(s)\,ds,
\qquad x\in[x_{j-1},x_j].
\end{equation}
Equivalently, with
\(t_{j,k}(x):=\int_x^{x_j}L_{j,k}^{\mu}(s)\,ds\), we have
\begin{equation}\label{eq:3.2R}
\varphi_h(x)
=
\varphi_h(x_j)-\sum_{k=1}^{\flat}t_{j,k}(x)Y_{j,k},
\qquad x\in\varrho_j.
\end{equation}

The collocation equations are imposed at the points \(x_{j,l}\). Thus, for
\(l=1,\ldots,\flat\),
\begin{align}
Y_{j,l}
&=
a(x_{j,l})\varphi_h(x_{j,l})+\chi(x_{j,l})
+
\sum_{i=j+1}^{\mathscr{M}}
\int_{x_{i-1}}^{x_i}
(s-x_{j,l})^{\nu-1}\mathscr K(x_{j,l},s)\varphi_h(s)\,ds
\nonumber\\
&\quad
+
\int_{x_{j,l}}^{x_j}
(s-x_{j,l})^{\nu-1}\mathscr K(x_{j,l},s)\varphi_h(s)\,ds .
\label{eq:3.3R-coll}
\end{align}
Substituting \eqref{eq:3.2R} into the current-interval contribution in
\eqref{eq:3.3R-coll} gives
\begin{align}
Y_{j,l}
&=
a(x_{j,l})\varphi_h(x_j)
-
a(x_{j,l})\sum_{k=1}^{\flat}t_{j,k}(x_{j,l})Y_{j,k}
+
\chi(x_{j,l})
+
F_j(x_{j,l})
\nonumber\\
&\quad
+
\varphi_h(x_j)
\int_{x_{j,l}}^{x_j}
(s-x_{j,l})^{\nu-1}\mathscr K(x_{j,l},s)\,ds
\nonumber\\
&\quad
-
\sum_{k=1}^{\flat}Y_{j,k}
\int_{x_{j,l}}^{x_j}
(s-x_{j,l})^{\nu-1}\mathscr K(x_{j,l},s)t_{j,k}(s)\,ds ,
\label{eq:3.3R-expanded}
\end{align}
where
\[
F_j(x_{j,l})
:=
\sum_{i=j+1}^{\mathscr{M}}
\int_{x_{i-1}}^{x_i}
(s-x_{j,l})^{\nu-1}\mathscr K(x_{j,l},s)\varphi_h(s)\,ds,
\]
\corr{with \(F_{\mathscr{M}}(x_{\mathscr{M},l})=0\).}

Define
\[
A_j:=\operatorname{diag}\bigl(a(x_{j,l})\bigr)_{l=1}^{\flat},
\qquad
B_j:=\bigl(t_{j,k}(x_{j,l})\bigr)_{l,k=1}^{\flat},
\]
\[
C_j:=
\left(
\int_{x_{j,l}}^{x_j}
(z-x_{j,l})^{\nu-1}\mathscr K(x_{j,l},z)t_{j,k}(z)\,dz
\right)_{l,k=1}^{\flat},
\]
and
\[
Y_j:=(Y_{j,1},\ldots,Y_{j,\flat})^{\corr{\mathrm T}},
\qquad
\boldsymbol{\chi}_j:=
(\chi(x_{j,1}),\ldots,\chi(x_{j,\flat}))^{\corr{\mathrm T}},
\]
\[
H_j:=
(F_j(x_{j,1}),\ldots,F_j(x_{j,\flat}))^{\corr{\mathrm T}},
\]
\[
\kappa_j:=
\left(
a(x_{j,l})
+
\int_{x_{j,l}}^{x_j}
(s-x_{j,l})^{\nu-1}\mathscr K(x_{j,l},s)\,ds
\right)_{l=1}^{\flat}.
\]
Then \eqref{eq:3.3R-expanded} is equivalent to
\begin{equation}\label{eq:3.3R}
\left(I_\flat+A_jB_j+C_j\right)Y_j
=
\boldsymbol{\chi}_j+H_j+\varphi_h(x_j)\kappa_j,
\qquad j=\mathscr{M},\mathscr{M}-1,\ldots,1,
\end{equation}
where \(I_\flat\) is the identity matrix in \(\mathbb R^\flat\). Once \(Y_j\) is
computed, the left endpoint value is updated by
\begin{equation}\label{eq:3.4R}
\varphi_h(x_{j-1})
=
\varphi_h(x_j)-\sum_{k=1}^{\flat}t_{j,k}(x_{j-1})Y_{j,k}.
\end{equation}

\section{Auxiliary lemmas}
\label{sec4}

\corr{In this section, we present} several auxiliary estimates needed for the error analysis. For \(r\in\mathbb{R}\), let \(\lfloor r\rfloor\) denote the greatest integer less than or equal to \(r\).

\begin{lemma}\label{lem:4.1R}
Let \(a\in(-1,0]\) and \(t\in\mathbb{R}\). For \(1\le j\le \mathscr{M}-2\),
there exists a constant \(C=C(a,t)\), independent of \(j\) and \(\mathscr{M}\), such
that
\[
\sum_{i=j+1}^{\mathscr{M}-1}(i-j)^a(1+\mathscr{M}-i)^t
\le
C
\begin{cases}
(\mathscr{M}-j)^a, & t<-1,\\[1mm]
(\mathscr{M}-j)^a\ln(\mathscr{M}-j+1), & t=-1,\\[1mm]
(\mathscr{M}-j)^{a+t+1}, & t>-1.
\end{cases}
\]
\end{lemma}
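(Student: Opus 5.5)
Set $N:=\mathscr{M}-j\ge2$ and reindex with $k:=i-j$, so that $k$ runs from $1$ to $N-1$ and $1+\mathscr{M}-i=N+1-k$. The sum becomes
\[
S:=\sum_{k=1}^{N-1}k^a(N+1-k)^t .
\]
The idea is to split the range at the midpoint. On each half, one factor is comparable to a fixed power of $N$, and the other factor leaves a one-dimensional sum that can be compared with an integral. I will write $S=S_1+S_2$, where $S_1$ collects the terms with $1\le k\le\lfloor N/2\rfloor$ and $S_2$ those with $\lfloor N/2\rfloor<k\le N-1$.

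\emph{The near half $S_1$.} For $k\le N/2$ we have $N/2<N+1-k\le N$. Hence $(N+1-k)^t\le C N^t$, with $C$ depending only on the sign of $t$. Since $a\in(-1,0]$, comparison with $\int_0^{N/2}x^a\,dx$ gives $\sum_{k\le N/2}k^a\le C N^{a+1}$. Therefore $S_1\le CN^{a+t+1}$.

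\emph{The far half $S_2$.} For $N/2<k\le N-1$ we have $k^a\le CN^a$, because $a\le0$. Setting $m=N+1-k$, which runs over $2\le m\le N$ roughly, we get
\[
S_2\le CN^a\sum_{m=2}^{N}m^t .
\]
This sum is bounded by a constant if $t<-1$, by $C\ln(N+1)$ if $t=-1$, and by $CN^{t+1}$ if $t>-1$.

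\emph{Combining the pieces.} In the case $t>-1$ both halves give $N^{a+t+1}$, so $S\le CN^{a+t+1}$. In the case $t\le-1$ we have $N^{a+t+1}\le N^a$, so $S_1$ is dominated by the bound for $S_2$. This gives $S\le CN^a$ when $t<-1$ and $S\le CN^a\ln(N+1)$ when $t=-1$. Substituting $N=\mathscr{M}-j$ back yields the three stated cases. All constants depend only on $a$ and $t$.

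\emph{Main point of care.} The only delicate step is the integral comparison for $\sum_k k^a$ when $a<0$. I handle it by bounding $k^a\le\int_{k-1}^k x^a\,dx$, which is valid because $x^a$ is decreasing when $a<0$. For $t<0$ the analogous bound $m^t\le\int_{m-1}^m x^t\,dx$ holds for $m\ge2$, which avoids the singularity at $0$. For $t\ge0$ I simply use $m^t\le N^t$, which gives the bound $N^{t+1}$ directly.
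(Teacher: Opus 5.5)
Your proof is correct and takes essentially the same route as the paper. Both arguments reindex, split the sum at the midpoint, and bound $(N+1-k)^t$ by $CN^t$ on the near half and $k^a$ by $CN^a$ on the far half. Both then finish with the standard bounds for $\sum k^a$ (using $a>-1$) and $\sum m^t$ in the three cases of $t$.

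The differences are cosmetic. You work directly with $N=\mathscr{M}-j$ and the factor $N+1-k$. The paper instead sets $K=\mathscr{M}-j+1$, extends the sum to $r\le K-1$, and converts $K$ back to $\mathscr{M}-j$ at the end.

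One small slip: the constant in $(N+1-k)^t\le CN^t$ is $2^{\max\{-t,0\}}$, so it depends on $t$ itself and not only on its sign. This is harmless, since $C=C(a,t)$ is allowed.
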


\begin{proof}
Set \(r=i-j\) and \(K=\mathscr{M}-j+1\). Then \(1\le r\le K-2\) and
\(1+\mathscr{M}-i=K-r\). Hence
\[
\sum_{i=j+1}^{\mathscr{M}-1}(i-j)^a(1+\mathscr{M}-i)^t
=
\sum_{r=1}^{K-2}r^a(K-r)^t
\le
\sum_{r=1}^{K-1}r^a(K-r)^t.
\]
Let \(\corr{K_0}:=\lfloor K/2\rfloor\) and write
\[
\sum_{r=1}^{K-1}r^a(K-r)^t=S_1+S_2,
\]
where
\[
S_1=\sum_{r=1}^{\corr{K_0}}r^a(K-r)^t,\qquad
S_2=\sum_{r=\corr{K_0}+1}^{K-1}r^a(K-r)^t.
\]
If \(1\le r\le \corr{K_0}\), then \(K/2\le K-r\le K\), and hence
\((K-r)^t\le CK^t\). Since \(a>-1\),
\[
S_1
\le
CK^t\sum_{r=1}^{\corr{K_0}}r^a
\le
CK^{a+t+1}.
\]
If \(\corr{K_0}+1\le r\le K-1\), then \(r>K/2\). Since \(a\le0\), this gives
\(r^a\le CK^a\). Therefore, with \(l=K-r\),
\[
S_2
\le
CK^a\sum_{r=\corr{K_0}+1}^{K-1}(K-r)^t
\le
CK^a\sum_{l=1}^{K-1}l^t.
\]
Using the standard bound
\[
\sum_{l=1}^{K-1}l^t
\le
C
\begin{cases}
1, & t<-1,\\
\ln K, & t=-1,\\
K^{t+1}, & t>-1,
\end{cases}
\]
we obtain
\[
S_2
\le
C
\begin{cases}
K^a, & t<-1,\\[1mm]
K^a\ln K, & t=-1,\\[1mm]
K^{a+t+1}, & t>-1.
\end{cases}
\]
Moreover, the estimate for \(S_1\) is absorbed by the same bounds. Indeed,
if \(t<-1\), then \(K^{a+t+1}\le K^a\); if \(t=-1\), then
\(K^{a+t+1}=K^a\le K^a\ln K\); and if \(t>-1\), the bound is already
\(K^{a+t+1}\). Thus
\[
\sum_{r=1}^{K-1}r^a(K-r)^t
\le
C
\begin{cases}
K^a, & t<-1,\\[1mm]
K^a\ln K, & t=-1,\\[1mm]
K^{a+t+1}, & t>-1.
\end{cases}
\]
Since \(K=\mathscr{M}-j+1\) and
\(\mathscr{M}-j\le K\le2(\mathscr{M}-j)\), the asserted estimate follows.
\end{proof}

For \(\corr{r>0}\), \(s>0\), and \(\mathscr{M}\in\mathbb{N}\), we define
\[
\varpi(\corr{r},s;\mathscr{M})
=
\begin{cases}
\mathscr{M}^{-\min\{\corr{r},s\}}, & \corr{r}\ne s,\\[1mm]
\mathscr{M}^{-\min\{\corr{r},s\}}\ln\mathscr{M}, & \corr{r}=s.
\end{cases}
\]
The following lemma is stated under the assumption that the collocation
\corr{parameters} \(\{\xi_l\}_{l=1}^\flat\) satisfy
\begin{equation}\label{eq:4.1R}
\mathcal{J}:=
\int_0^1\prod_{l=1}^\flat(\zeta-\xi_l)\,d\zeta=0.
\end{equation}

The following lemma gives an estimate for the accumulated interpolation error over the backward Volterra interval, which will be used in the superconvergence analysis.

\begin{lemma}\label{lem:4.2R}
Let \(\flat\in\mathbb{N}\) and \(\gamma>0\). Assume that the collocation
\corr{parameters} \(\{\xi_l\}_{l=1}^\flat\) satisfy \eqref{eq:4.1R}, and set
\(\lambda=1/\mu\). If
\(f\in C^{\flat+1,1-\gamma}[b-b^\mu,b)\), then, for
\(x\in\varrho_j\), \(j=1,\ldots,\mathscr{M}\), define
\[
\Delta_{j,i;x}
=
\begin{cases}
\displaystyle
\int_{\rho_{i-1}}^{\rho_i}
\bigl(f(\rho)-Q_{\flat,\mathscr{M}}^\mu f(\rho)\bigr)
(b-\rho)^{\lambda-1}\,d\rho,
& j<i\le \mathscr{M},\\[4mm]
\displaystyle
\int_{\rho(x)}^{\rho_j}
\bigl(f(\rho)-Q_{\flat,\mathscr{M}}^\mu f(\rho)\bigr)
(b-\rho)^{\lambda-1}\,d\rho,
& i=j,
\end{cases}
\]
where \(\rho(x)=b-(b-x)^\mu\). Then there exists a constant
\(C=C(f)\) such that, for every \(x\in\varrho_j\),
\[
\left|
\int_{\rho(x)}^b
\bigl(f(\rho)-Q_{\flat,\mathscr{M}}^\mu f(\rho)\bigr)
(b-\rho)^{\lambda-1}\,d\rho
\right|
\le
\sum_{i=j}^{\mathscr{M}}|\Delta_{j,i;x}|,
\]
and
\[
\sum_{i=j}^{\mathscr{M}}|\Delta_{j,i;x}|
\le
C
\begin{cases}
\varpi\bigl(q(1+\mu\min\{\flat,\gamma\}),\flat+1;\mathscr{M}\bigr),
& \flat\ne\gamma,\\[1mm]
\varpi\bigl(q(\mu\flat+1),\flat+1;\mathscr{M}\bigr)\ln\mathscr{M},
& \flat=\gamma.
\end{cases}
\]
\end{lemma}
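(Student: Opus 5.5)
The first inequality is just the triangle inequality. Since $x\in\varrho_j$ gives $\rho(x)\in\varrho_{j,\mu}$, we split $[\rho(x),b]=[\rho(x),\rho_j]\cup\bigcup_{i=j+1}^{\mathscr M}\varrho_{i,\mu}$. The integral over this union is exactly $\sum_{i=j}^{\mathscr M}\Delta_{j,i;x}$. The real work is the second inequality.

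For the second inequality I would treat three kinds of terms differently: the partial term $i=j$, the terminal term $i=\mathscr M$, and the interior full cells $j<i<\mathscr M$.

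\textbf{Partial and terminal cells.} For the partial cell and the terminal cell I would use only the crude estimate of Lemma~\ref{lem:2.2R}:
\[
|\Delta_{j,i;x}|\le h_{i,\mu}\max_{\varrho_{i,\mu}}|(b-\rho)^{\lambda-1}|\,\max_{\varrho_{i,\mu}}|f-Q^\mu_{\flat,\mathscr M}f|.
\]
The change of variables gives $\int_{\varrho_{i,\mu}}(b-\rho)^{\lambda-1}d\rho=\mu h_i$. So, up to constants, each such term is $h_i\max_{\varrho_{i,\mu}}|f-Q^\mu_{\flat,\mathscr M}f|$, which Lemma~\ref{lem:2.3R} bounds by $\mathscr M^{-\min\{q(\mu\min\{\flat,\gamma\}+1),\flat+1\}}$, with an extra $\ln\mathscr M$ when $\flat=\gamma$. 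Since these are at most two terms, they are already of the claimed size.

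\textbf{Interior full cells.} Here I use the orthogonality condition \eqref{eq:4.1R}. Write the interpolation error as
\[
f-Q^\mu_{\flat,\mathscr M}f=f[\eta_{i,1},\dots,\eta_{i,\flat},\rho]\prod_l(\rho-\eta_{i,l}),
\]
and expand the divided difference to first order. With $g(\rho):=(b-\rho)^{\lambda-1}$, this gives
\[
(f-Qf)\,g=\frac{f^{(\flat)}(\rho_{i-1})}{\flat!}\,g(\rho_{i-1})\prod_l(\rho-\eta_{i,l})+R_i(\rho).
\]
The leading term integrates to $h_{i,\mu}^{\flat+1}\mathcal J=0$. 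The remainder involves $f^{(\flat+1)}$ on $\varrho_{i,\mu}$, or $g'$ times $f^{(\flat)}$. Hence
\[
|\Delta_{j,i;x}|\le Ch_{i,\mu}^{\flat+2}\Big(\max|f^{(\flat+1)}|\max g+\max|f^{(\flat)}|\max|g'|\Big).
\]
Using the $C^{\flat+1,1-\gamma}$ bounds and $b-\rho\simeq(b-x_{i-1})^\mu$ on the cell (valid because $i<\mathscr M$), each cell contributes roughly
\[
h_{i,\mu}^{\flat+2}(b-x_{i-1})^{\mu(\min\{\gamma,\flat+1\}-\flat-1)+\mu(\lambda-1)}.
\]
Using $h_{i,\mu}\simeq \mathscr M^{-q\mu}(\mathscr M-i+1)^{q\mu-1}$, one factor $h_{i,\mu}(b-x_{i-1})^{\mu(\lambda-1)}\simeq h_i$. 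The other factor is treated as in Lemma~\ref{lem:2.1R}. Each interior term then becomes
\[
C\,\mathscr M^{-q(1+\mu\min\{\flat,\gamma\})-q\mu}\,(\mathscr M-i+1)^{t},
\]
with an explicit exponent $t$.

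\textbf{Summing the interior cells.} The sum over $i$ is a sum of powers $(1+\mathscr M-i)^t$. Applying Lemma~\ref{lem:4.1R} with $a=0$ gives three regimes:
\begin{itemize}
\item $t<-1$: the sum is $O(1)$, so the total is the grading-limited order $\mathscr M^{-q(1+\mu\min\{\flat,\gamma\})}$;
\item $t>-1$: the sum is $O(\mathscr M^{t+1})$, and recombining exponents gives $\mathscr M^{-(\flat+1)}$;
\item $t=-1$: a logarithm appears.
\end{itemize}
This is exactly the $\varpi$ dichotomy. The case $\flat=\gamma$ carries the extra $\ln$ from the $f^{(\flat)}$ bound.

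\textbf{Main obstacle.} I expect the main difficulty to be this exponent bookkeeping, specifically showing that the extra factor $h_{i,\mu}\,(b-x_{i-1})^{-\mu}$ gained from orthogonality exactly converts the bound into the $\varpi(q(1+\mu\min\{\flat,\gamma\}),\flat+1;\mathscr M)$ form. If the powers of $\mathscr M-i+1$ do not match directly, my first fallback is to split the sum at $i\approx\mathscr M/2$, as in the proof of Lemma~\ref{lem:4.1R}.
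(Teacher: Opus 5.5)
Your proposal follows the paper's proof essentially step for step: the triangle inequality for the first claim, Lemma~\ref{lem:2.3R} (via $\int_{\varrho_{i,\mu}}(b-\rho)^{\lambda-1}\,d\rho=\mu h_i$) for the partial cell $i=j$ and the terminal cell $i=\mathscr{M}$, condition \eqref{eq:4.1R} plus a first-order Taylor expansion of $f[\rho,\eta_{i,1},\ldots,\eta_{i,\flat}](b-\rho)^{\lambda-1}$ on the interior cells, the mesh estimates, and Lemma~\ref{lem:4.1R} with $a=0$.

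One bookkeeping fix is needed in the interior estimate. When $\gamma>\flat$ the term $\max|f^{(\flat)}|\max|g'|$ dominates, so the exponent of $b-x_{i-1}$ is $\mu(\min\{\gamma,\flat\}-\flat)+1-2\mu$, not $\mu(\min\{\gamma,\flat+1\}-\flat-1)+1-\mu$. Also, the orthogonality gain $h_{i,\mu}(b-x_{i-1})^{-\mu}\simeq(\mathscr{M}-i+1)^{-1}$ carries no power of $\mathscr{M}$. Hence each interior cell is bounded by $C\mathscr{M}^{-q(1+\mu\min\{\flat,\gamma\})}(1+\mathscr{M}-i)^{q(1+\mu\min\{\flat,\gamma\})-\flat-2}$ (times $\ln\mathscr{M}$ if $\flat=\gamma$), without your extra factor $\mathscr{M}^{-q\mu}$. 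This is the paper's \eqref{eq:4.7R}, and Lemma~\ref{lem:4.1R} then gives exactly the stated $\varpi$ bound.
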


\begin{proof}
The first assertion follows immediately from the definition of
\(\Delta_{j,i;x}\) and the triangle inequality:
\[
\left|
\int_{\rho(x)}^b
\bigl(f(\rho)-Q_{\flat,\mathscr{M}}^\mu f(\rho)\bigr)
(b-\rho)^{\lambda-1}\,d\rho
\right|
\le
\sum_{i=j}^{\mathscr{M}}|\Delta_{j,i;x}|.
\]

We first consider the last two mesh intervals. If
\(x\in\varrho_{\mathscr{M}}\), then, by Lemma~\ref{lem:2.2R} and
\(b-x_{\mathscr{M}-1}\le C\mathscr{M}^{-q}\),
\[
\left|
\int_{\rho(x)}^b
\bigl(f(\rho)-Q_{\flat,\mathscr{M}}^\mu f(\rho)\bigr)
(b-\rho)^{\lambda-1}\,d\rho
\right|
=
|\Delta_{\mathscr{M},\mathscr{M};x}|
\le
Ch_{\mathscr{M}}
\|f-Q_{\flat,\mathscr{M}}^\mu f\|_{L^\infty(\varrho_{\mathscr{M},\mu})}
\]
\[
\le
C\mathscr{M}^{-q}
\begin{cases}
\mathscr{M}^{-\min\{q\mu\flat,\flat\}}, & \flat<\gamma,\\[1mm]
\mathscr{M}^{-\min\{q\mu\flat,\flat\}}\ln\mathscr{M}, & \flat=\gamma,\\[1mm]
\mathscr{M}^{-\min\{q\mu\gamma,\flat\}}, & \flat>\gamma.
\end{cases}
\]
Similarly, if \(x\in\varrho_{\mathscr{M}-1}\), then
\(b-x_{\mathscr{M}-2}\le C\mathscr{M}^{-q}\), and
Lemma~\ref{lem:2.2R} gives
\[
\left|
\int_{\rho(x)}^b
\bigl(f(\rho)-Q_{\flat,\mathscr{M}}^\mu f(\rho)\bigr)
(b-\rho)^{\lambda-1}\,d\rho
\right|
\le
|\Delta_{\mathscr{M}-1,\mathscr{M}-1;x}|
+
|\Delta_{\mathscr{M}-1,\mathscr{M};x}|
\]
\[
\le
C\mathscr{M}^{-q}
\begin{cases}
\mathscr{M}^{-\min\{q\mu\flat,\flat\}}, & \flat<\gamma,\\[1mm]
\mathscr{M}^{-\min\{q\mu\flat,\flat\}}\ln\mathscr{M}, & \flat=\gamma,\\[1mm]
\mathscr{M}^{-\min\{q\mu\gamma,\flat\}}, & \flat>\gamma.
\end{cases}
\]

It remains to treat the case \(x\in\varrho_j\),
\(1\le j\le\mathscr{M}-2\). The two end contributions
\(i=j\) and \(i=\mathscr{M}\) are estimated by
Lemma~\ref{lem:2.3R}:
\[
|\Delta_{j,j;x}|+|\Delta_{j,\mathscr{M};x}|
\le
Ch_j\|f-Q_{\flat,\mathscr{M}}^\mu f\|_{L^\infty(\varrho_{j,\mu})}
+
Ch_{\mathscr{M}}
\|f-Q_{\flat,\mathscr{M}}^\mu f\|_{L^\infty(\varrho_{\mathscr{M},\mu})}.
\]
Consequently,
\begin{equation}\label{eq:4.2}
|\Delta_{j,j;x}|+|\Delta_{j,\mathscr{M};x}|
\le
C
\begin{cases}
\mathscr{M}^{-\min\{q(\mu\flat+1),\flat+1\}}, & \flat<\gamma,\\[1mm]
\mathscr{M}^{-\min\{q(\mu\flat+1),\flat+1\}}\ln\mathscr{M},
& \flat=\gamma,\\[1mm]
\mathscr{M}^{-\min\{q(\mu\gamma+1),\flat+1\}}, & \flat>\gamma.
\end{cases}
\end{equation}

We next estimate the interior terms
\(\Delta_{j,i;x}\), \(j<i<\mathscr{M}\). The interpolation error formula gives
\[
\Delta_{j,i;x}
=
\int_{\rho_{i-1}}^{\rho_i}
\bigl(f(\rho)-Q_{\flat,\mathscr{M}}^\mu f(\rho)\bigr)
(b-\rho)^{\lambda-1}\,d\rho
\]
\[
=
\int_{\rho_{i-1}}^{\rho_i}
f[\rho,\eta_{i,1},\ldots,\eta_{i,\flat}]
\prod_{l=1}^{\flat}(\rho-\eta_{i,l})
(b-\rho)^{\lambda-1}\,d\rho,
\]
where \(f[\zeta_1,\ldots,\zeta_{\flat+1}]\) denotes the Newton divided
difference of order \(\flat\). Set
\[
\phi(\rho)
=
f[\rho,\eta_{i,1},\ldots,\eta_{i,\flat}]
(b-\rho)^{\lambda-1}.
\]
By the choice of the collocation parameters in \eqref{eq:4.1R},
\[
\int_{\rho_{i-1}}^{\rho_i}
\prod_{l=1}^{\flat}(\rho-\eta_{i,l})\,d\rho
=
h_{i,\mu}^{\flat+1}
\int_0^1\prod_{l=1}^{\flat}(\zeta-\xi_l)\,d\zeta
=
0.
\]
Taylor's formula then yields
\[
\Delta_{j,i;x}
=
\int_{\rho_{i-1}}^{\rho_i}
\left[
\phi(\rho_{i-1})
+
(\rho-\rho_{i-1})\phi'(\theta_i)
\right]
\prod_{l=1}^{\flat}(\rho-\eta_{i,l})\,d\rho
\]
\begin{equation}\label{eq:4.3}
=
\int_{\rho_{i-1}}^{\rho_i}
(\rho-\rho_{i-1})\phi'(\theta_i)
\prod_{l=1}^{\flat}(\rho-\eta_{i,l})\,d\rho,
\end{equation}
where \(\corr{\theta_i=\theta_i(\rho)}\in(\rho_{i-1},\rho)\).

The standard estimates for divided differences imply that, for some
\(\eta_i^*\in\varrho_{i,\mu}\),
\[
|f[\rho,\eta_{i,1},\ldots,\eta_{i,\flat}]|
=
\left|\frac{f^{(\flat)}(\eta_i^*)}{\flat!}\right|
\le
\max_{\rho\in\varrho_{i,\mu}}|f^{(\flat)}(\rho)|
\]
\begin{equation}\label{eq:4.4}
\le
C
\begin{cases}
1, & \flat<\gamma,\\[1mm]
1+\left|\ln(b-\rho_{i-1})\right|, & \flat=\gamma,\\[1mm]
(b-\rho_{i-1})^{\gamma-\flat}, & \flat>\gamma.
\end{cases}
\end{equation}
Moreover,
\[
\left|
\frac{d}{d\rho}
f[\rho,\eta_{i,1},\ldots,\eta_{i,\flat}]
\right|
=
|f[\rho,\rho,\eta_{i,1},\ldots,\eta_{i,\flat}]|
\le
\max_{\rho\in\varrho_{i,\mu}}|f^{(\flat+1)}(\rho)|
\]
\begin{equation}\label{eq:4.5}
\le
C
\begin{cases}
1, & \flat+1<\gamma,\\[1mm]
1+\left|\ln(b-\rho_{i-1})\right|, & \flat+1=\gamma,\\[1mm]
(b-\rho_{i-1})^{\gamma-\flat-1}, & \flat+1>\gamma.
\end{cases}
\end{equation}
For \(i=j+1,\ldots,\mathscr{M}-1\), the graded mesh satisfies
\(b-x_{i-1}\le C(b-x_i)\). Hence
\[
|\phi'(\theta_i)|
\le
\max_{\rho\in\varrho_{i,\mu}}
\left\{
\left|
\frac{d}{d\rho}f[\rho,\eta_{i,1},\ldots,\eta_{i,\flat}]
\right|
(b-\rho)^{\lambda-1}
+
|\lambda-1|
\left|
f[\rho,\eta_{i,1},\ldots,\eta_{i,\flat}]
\right|
(b-\rho)^{\lambda-2}
\right\}.
\]
Using \eqref{eq:4.4}--\eqref{eq:4.5}, we arrive at
\begin{equation}\label{eq:4.6}
|\phi'(\theta_i)|
\le
C
\begin{cases}
(b-x_{i-1})^{1-2\mu}, & \flat<\gamma,\\[1mm]
(b-x_{i-1})^{1-2\mu}
\bigl(1+|\ln(b-x_{i-1})^\mu|\bigr), & \flat=\gamma,\\[1mm]
(b-x_{i-1})^{\mu(\gamma-\flat)+1-2\mu}, & \flat>\gamma.
\end{cases}
\end{equation}
Combining \eqref{eq:4.3}, \eqref{eq:4.6}, and
Lemma~\ref{lem:2.1R}, we obtain
\[
|\Delta_{j,i;x}|
\le
Ch_{i,\mu}^{\flat+2}
\begin{cases}
(b-x_{i-1})^{1-2\mu}, & \flat<\gamma,\\[1mm]
(b-x_{i-1})^{1-2\mu}
\bigl(1+|\ln(b-x_{i-1})^\mu|\bigr), & \flat=\gamma,\\[1mm]
(b-x_{i-1})^{\mu(\gamma-\flat)+1-2\mu}, & \flat>\gamma.
\end{cases}
\]
Using the mesh estimates, this becomes
\begin{equation}\label{eq:4.7R}
|\Delta_{j,i;x}|
\le
C
\begin{cases}
\mathscr{M}^{-q(\mu\flat+1)}
(1+\mathscr{M}-i)^{q(\mu\flat+1)-\flat-2},
& \flat<\gamma,\\[1mm]
\mathscr{M}^{-q(\mu\flat+1)}
(1+\mathscr{M}-i)^{q(\mu\flat+1)-\flat-2}\ln\mathscr{M},
& \flat=\gamma,\\[1mm]
\mathscr{M}^{-q(\mu\gamma+1)}
(1+\mathscr{M}-i)^{q(\mu\gamma+1)-\flat-2},
& \flat>\gamma.
\end{cases}
\end{equation}

We now sum the interior estimates. First let \(\flat>\gamma\).
From \eqref{eq:4.7R},
\[
\sum_{i=j+1}^{\mathscr{M}-1}|\Delta_{j,i;x}|
\le
C\mathscr{M}^{-q(\mu\gamma+1)}
\sum_{i=j+1}^{\mathscr{M}-1}
(1+\mathscr{M}-i)^{q(\mu\gamma+1)-\flat-2}.
\]
Applying Lemma~\ref{lem:4.1R} with
\(a=0\) and \(t=q(\mu\gamma+1)-\flat-2\) gives
\[
\sum_{i=j+1}^{\mathscr{M}-1}|\Delta_{j,i;x}|
\le
C
\begin{cases}
\mathscr{M}^{-q(\mu\gamma+1)}, & q(\mu\gamma+1)<\flat+1,\\[1mm]
\mathscr{M}^{-\flat-1}\ln\mathscr{M},
& q(\mu\gamma+1)=\flat+1,\\[1mm]
\mathscr{M}^{-\flat-1}, & q(\mu\gamma+1)>\flat+1.
\end{cases}
\]
Together with \eqref{eq:4.2}, this yields
\[
\sum_{i=j}^{\mathscr{M}}|\Delta_{j,i;x}|
\le
C
\begin{cases}
\mathscr{M}^{-q(\mu\gamma+1)}, & q(\mu\gamma+1)<\flat+1,\\[1mm]
\mathscr{M}^{-\flat-1}\ln\mathscr{M},
& q(\mu\gamma+1)=\flat+1,\\[1mm]
\mathscr{M}^{-\flat-1}, & q(\mu\gamma+1)>\flat+1
\end{cases}
=
C\varpi\bigl(q(\mu\gamma+1),\flat+1;\mathscr{M}\bigr).
\]

It remains to consider \(\flat\le\gamma\). Using \eqref{eq:4.7R} and
applying Lemma~\ref{lem:4.1R} with
\(a=0\) and \(t=q(\mu\flat+1)-\flat-2\), we obtain the interior estimate
\[
\sum_{i=j+1}^{\mathscr{M}-1}|\Delta_{j,i;x}|
\le
C
\begin{cases}
\varpi\bigl(q(\mu\flat+1),\flat+1;\mathscr{M}\bigr),
& \flat<\gamma,\\[1mm]
\varpi\bigl(q(\mu\flat+1),\flat+1;\mathscr{M}\bigr)\ln\mathscr{M},
& \flat=\gamma.
\end{cases}
\]
Combining this with the end contribution estimate \eqref{eq:4.2}, we get
\[
\sum_{i=j}^{\mathscr{M}}|\Delta_{j,i;x}|
\le
C
\begin{cases}
\varpi\bigl(q(\mu\flat+1),\flat+1;\mathscr{M}\bigr),
& \flat<\gamma,\\[1mm]
\varpi\bigl(q(\mu\flat+1),\flat+1;\mathscr{M}\bigr)\ln\mathscr{M},
& \flat=\gamma.
\end{cases}
\]
The proof is complete.
\end{proof}

\begin{lemma}\label{lem:4.3R}
Let \(\flat\in\mathbb{N}\) and \(\gamma>0\). Assume that the collocation
parameters satisfy \eqref{eq:4.1R}. Set \(\lambda=1/\mu\), and let
\(f\in C^{\flat+1,1-\gamma}[b-b^\mu,b)\). Then there exists a constant
\(C=C(f)\) such that, for all \(x\in\Lambda\),
\[
\left|
\int_{\rho(x)}^b
\left[a(x)+\mathscr K_1^R(x,\tau(\rho))\right]
\bigl(f(\rho)-Q_{\flat,\mathscr{M}}^\mu f(\rho)\bigr)
(b-\rho)^{\lambda-1}\,d\rho
\right|
\]
\[
\le
C
\begin{cases}
\varpi\bigl(q(1+\mu\min\{\flat,\gamma\}),\flat+1;\mathscr{M}\bigr),
& \flat\ne\gamma,\\[1mm]
\varpi\bigl(q(\mu\flat+1),\flat+1;\mathscr{M}\bigr)\ln\mathscr{M},
& \flat=\gamma.
\end{cases}
\]
\end{lemma}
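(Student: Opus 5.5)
The plan is to reduce Lemma~\ref{lem:4.3R} to Lemma~\ref{lem:4.2R} by treating the kernel factor \(g_x(\rho):=a(x)+\mathscr K_1^R(x,\tau(\rho))\) as a smooth-enough weight that can be absorbed into the test function. Fix \(x\in\varrho_j\) and split the integral over \([\rho(x),b]\) into the pieces \(\Delta\)-type integrals over \([\rho(x),\rho_j]\) and \(\varrho_{i,\mu}\), \(i=j+1,\ldots,\mathscr{M}\), exactly as in Lemma~\ref{lem:4.2R}, but now with the extra factor \(g_x(\rho)\) inside.

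\emph{End contributions.} For the pieces \(i=j\) and \(i=\mathscr{M}\) (and all pieces when \(j\ge\mathscr{M}-1\)), I would simply use \(|g_x(\rho)|\le |a(x)|+\frac{C}{\nu}(\tau(\rho)-x)^\nu\le C\), which follows from \eqref{eq:K1R-bound}. Then the same bound \(Ch_i\|f-Q_{\flat,\mathscr{M}}^\mu f\|_{L^\infty(\varrho_{i,\mu})}\) (after changing variables, \((b-\rho)^{\lambda-1}d\rho\) corresponds to \(\mu^{-1}\)-multiple of \(dx\)) holds, and Lemma~\ref{lem:2.3R} gives \eqref{eq:4.2} unchanged.

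\emph{Interior pieces} \(j<i<\mathscr{M}\). Here I redefine \(\phi(\rho):=f[\rho,\eta_{i,1},\ldots,\eta_{i,\flat}]\,(b-\rho)^{\lambda-1}g_x(\rho)\) and repeat the orthogonality argument \eqref{eq:4.3}: the constant term \(\phi(\rho_{i-1})\) is killed by \eqref{eq:4.1R}, leaving \(h_{i,\mu}^{\flat+2}\max|\phi'|\). The product rule adds to the bound \eqref{eq:4.6} one new term, \(|f[\cdot]|(b-\rho)^{\lambda-1}|\partial_\rho g_x(\rho)|\). Since \(\rho>\rho_j\ge\rho(x)\) on these intervals, \(s=\tau(\rho)>x\) and \(\partial_s\mathscr K_1^R(x,s)=(s-x)^{\nu-1}\mathscr K(x,s)\), so \(|\partial_\rho g_x|\le C(s-x)^{\nu-1}\tau'(\rho)=C(s-x)^{\nu-1}(b-\rho)^{\lambda-1}\lambda\). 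This new term is the main obstacle: it is singular as \(s\to x\), i.e.\ for \(i=j+1\) near the left end. I would first try to control it by \(s-x\ge x_{i-1}-x_j\ge h_{i-1}\)-type separation when \(i\ge j+2\), using \(h_{i-1}\simeq h_i\) on the graded mesh, giving \((s-x)^{\nu-1}\le C h_i^{\nu-1}\). Then the extra contribution is \(h_{i,\mu}^{\flat+2}\,|f^{(\flat)}|\,(b-x_{i-1})^{2-2\mu}\cdot h_i^{\nu-1}\), and since \(h_i\simeq h_{i,\mu}(b-x_{i-1})^{1-\mu}\), this is bounded by \(h_{i,\mu}^{\flat+1+\nu}\) times the same weights, which is at least as good as a factor \(h_i\cdot\) Lemma~\ref{lem:2.2R} bound; more conservatively, I would just estimate these extra terms without orthogonality: \(\int_{\varrho_{i,\mu}}|f-Q f|\,|g_x|\ldots\) is fine, so the real need is summing \(\sum_i h_{i,\mu}^{\flat+1}h_i^{\nu}\cdots\), which, after using Lemma~\ref{lem:2.1R}, has the form \(\mathscr{M}^{-q(\mu\min\{\flat,\gamma\}+1)}\sum(\mathscr{M}-i+1)^{t'}\) with \(t'\) at least as favorable as \(t=q(\mu\min\{\flat,\gamma\}+1)-\flat-2\), so Lemma~\ref{lem:4.1R} (with \(a=0\), or with \(a=\nu-1\in(-1,0]\) if one keeps \((i-j)^{\nu-1}\) from \((s-x)^{\nu-1}\ge c\,(\text{distance in index})\) scaling) yields the same \(\varpi\)-bounds. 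The piece \(i=j+1\), where \(s-x\) may be as small as the gap to \(x_j\), I would treat as an end contribution: bound it directly by \(Ch_{j+1}\|f-Qf\|_{L^\infty(\varrho_{j+1,\mu})}\) using only \(|g_x|\le C\), and invoke Lemma~\ref{lem:2.3R}.

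Summing end and interior contributions exactly as at the end of the proof of Lemma~\ref{lem:4.2R}, split into \(\flat>\gamma\) and \(\flat\le\gamma\) with Lemma~\ref{lem:4.1R}, gives the stated bound uniformly in \(x\in\Lambda\), with \(C\) depending on \(f\), \(a\), \(\mathscr K\) but not on \(x\), \(j\), \(\mathscr{M}\).
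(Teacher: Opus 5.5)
Your overall architecture is sound and is a mild variant of the paper's. The paper freezes the kernel at $x_{i-1}$ and splits $\Theta_{j,i;x}=\Theta^1_{j,i;x}+\Theta^2_{j,i;x}$: orthogonality handles the frozen part, and a crude bound handles the difference. You instead put $g_x$ inside $\phi$ and pick up the extra product-rule term $|f[\cdot]|\,(b-\rho)^{\lambda-1}|\partial_\rho g_x|$. Both routes reduce to the same per-interval quantity on $\varrho_i$, $j+2\le i\le\mathscr{M}-1$, of size $C\,h_i^{2}\,(s-x)^{\nu-1}\,\|f-Q_{\flat,\mathscr{M}}^\mu f\|_{L^\infty(\varrho_{i,\mu})}$.

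The gap is in how you bound and sum this quantity. Your main line uses only $(s-x)^{\nu-1}\le Ch_i^{\nu-1}$ together with Lemma~\ref{lem:4.1R} for $a=0$. In the case $\flat<\gamma$ this gives, per interval,
$Ch_i^{1+\nu}\|f-Q_{\flat,\mathscr{M}}^\mu f\|_{L^\infty(\varrho_{i,\mu})}\le C\mathscr{M}^{-q(\mu\flat+1+\nu)}(1+\mathscr{M}-i)^{q(\mu\flat+1+\nu)-\flat-1-\nu}$.
The sum over $i$ is $C\mathscr{M}^{-(\flat+\nu)}$ as soon as $q(\mu\flat+1+\nu)>\flat+\nu$. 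Measured against $\mathscr{M}^{-q(\mu\flat+1)}$, your exponent is $t'=t+1-\nu$ with $t=q(\mu\flat+1)-\flat-2$. That is strictly less favorable than $t$, not ``at least as favorable.'' So in exactly the regime $q(\mu\flat+1)>\flat+1$ that yields the optimal order, you get $\mathscr{M}^{-\flat-\nu}$ instead of $\mathscr{M}^{-\flat-1}$; the cases $\flat\ge\gamma$ behave the same way. Your fallback comparison does not help either: being per-interval no worse than $h_i$ times the Lemma~\ref{lem:2.2R} bound only gives, after summing over $O(\mathscr{M})$ intervals, the order-$\flat$ estimate of Theorem~\ref{thm:5.1R}. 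Replacing the weak singularity by its worst local value throws away its integrability, and the $\flat+1$ rate rests precisely on that integrability.

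The option you mention only parenthetically is therefore mandatory, and the inequality must point the right way. For $i\ge j+2$ and $s\in\varrho_i$, use
$s-x\ge x_{i-1}-x_j=\sum_{k=j+1}^{i-1}h_k\ge(i-j-1)h_i$,
which holds because the steps $h_k$ decrease in $k$ on the terminally graded mesh. The per-interval bound then becomes $C\,h_i^{1+\nu}(i-j-1)^{\nu-1}\|f-Q_{\flat,\mathscr{M}}^\mu f\|_{L^\infty(\varrho_{i,\mu})}$. Now apply Lemma~\ref{lem:4.1R} with $a=\nu-1$ and $t=q\mu\min\{\flat,\gamma\}-\flat+(q-1)(1+\nu)$, keeping the full factor $\mathscr{M}^{-q(1+\nu)}$.

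For $t>-1$ this yields
$C\mathscr{M}^{-q(\mu\min\{\flat,\gamma\}+1+\nu)}(\mathscr{M}-j)^{q(\mu\min\{\flat,\gamma\}+1+\nu)-\flat-1}\le C\mathscr{M}^{-\min\{q(\mu\min\{\flat,\gamma\}+1),\,\flat+1\}}$.
The extra index decay, which enters as the factor $(\mathscr{M}-j)^{\nu-1}$, is exactly what recovers the missing $1-\nu$. For $t\le-1$ you get $C\mathscr{M}^{-q(\mu\min\{\flat,\gamma\}+1+\nu)}$, which is again dominated by the orthogonality part. This is the paper's estimate \eqref{eq:4.13R}--\eqref{eq:4.15R}. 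With it in place, and with the remaining terms of $\phi'$ reproducing \eqref{eq:4.6}--\eqref{eq:4.7R} because $|g_x|\le C$, your proof goes through.
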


\begin{proof}
The boundedness of \(a\) on \(\Lambda\) and the estimate
\eqref{eq:K1R-bound} will be used throughout. For
\(x\in\varrho_{\mathscr{M}}\cup\varrho_{\mathscr{M}-1}\),
Lemma~\ref{lem:2.2R} gives
\[
\left|
\int_{\rho(x)}^b
\left[a(x)+\mathscr K_1^R(x,\tau(\rho))\right]
\bigl(f(\rho)-Q_{\flat,\mathscr{M}}^\mu f(\rho)\bigr)
(b-\rho)^{\lambda-1}\,d\rho
\right|
\]
\[
\le
C(b-x_{\mathscr{M}-2})
\|f-Q_{\flat,\mathscr{M}}^\mu f\|_
{L^\infty(\varrho_{\mathscr{M}-1,\mu}\cup\varrho_{\mathscr{M},\mu})}
\]
\[
\le
C\mathscr{M}^{-q}
\begin{cases}
\mathscr{M}^{-\min\{q\mu\flat,\flat\}}, & \flat<\gamma,\\[1mm]
\mathscr{M}^{-\min\{q\mu\flat,\flat\}}\ln\mathscr{M},
& \flat=\gamma,\\[1mm]
\mathscr{M}^{-\min\{q\mu\gamma,\flat\}}, & \flat>\gamma,
\end{cases}
\]
which is stronger than the required bound.

\corr{Now let} \(x\in\varrho_j\), \(1\le j\le\mathscr{M}-2\). We obtain
\begin{equation}\label{eq:4.9R}
\left|
\int_{\rho(x)}^b
\left[a(x)+\mathscr K_1^R(x,\tau(\rho))\right]
\bigl(f(\rho)-Q_{\flat,\mathscr{M}}^\mu f(\rho)\bigr)
(b-\rho)^{\lambda-1}\,d\rho
\right|
\le
\sum_{i=j}^{\mathscr{M}}|\Theta_{j,i;x}|,
\end{equation}
where
\[
\Theta_{j,i;x}
:=
\begin{cases}
\displaystyle
\int_{\rho_{i-1}}^{\rho_i}
\left[a(x)+\mathscr K_1^R(x,\tau(\rho))\right]
\bigl(f(\rho)-Q_{\flat,\mathscr{M}}^\mu f(\rho)\bigr)
(b-\rho)^{\lambda-1}\,d\rho,
& j<i\le\mathscr{M},\\[4mm]
\displaystyle
\int_{\rho(x)}^{\rho_j}
\left[a(x)+\mathscr K_1^R(x,\tau(\rho))\right]
\bigl(f(\rho)-Q_{\flat,\mathscr{M}}^\mu f(\rho)\bigr)
(b-\rho)^{\lambda-1}\,d\rho,
& i=j.
\end{cases}
\]
The terms with \(i=j\), \(i=j+1\), and \(i=\mathscr{M}\) are estimated by
Lemma~\ref{lem:2.3R}. Hence
\begin{equation}\label{eq:4.10R}
|\Theta_{j,j;x}|+|\Theta_{j,j+1;x}|+|\Theta_{j,\mathscr{M};x}|
\le
C
\begin{cases}
\mathscr{M}^{-\min\{q(\mu\flat+1),\flat+1\}}, & \flat<\gamma,\\[1mm]
\mathscr{M}^{-\min\{q(\mu\flat+1),\flat+1\}}\ln\mathscr{M},
& \flat=\gamma,\\[1mm]
\mathscr{M}^{-\min\{q(\mu\gamma+1),\flat+1\}}, & \flat>\gamma.
\end{cases}
\end{equation}

It remains to estimate the middle part
\(i=j+2,\ldots,\mathscr{M}-1\). Write
\[
\Theta_{j,i;x}
=
\Theta_{j,i;x}^{1}+\Theta_{j,i;x}^{2},
\qquad i=j+2,\ldots,\mathscr{M}-1,
\]
where
\[
\Theta_{j,i;x}^{1}
:=
\int_{\rho_{i-1}}^{\rho_i}
\left[a(x)+\mathscr K_1^R(x,x_{i-1})\right]
\bigl(f(\rho)-Q_{\flat,\mathscr{M}}^\mu f(\rho)\bigr)
(b-\rho)^{\lambda-1}\,d\rho,
\]
and
\[
\Theta_{j,i;x}^{2}
:=
\int_{\rho_{i-1}}^{\rho_i}
\left[
\mathscr K_1^R(x,\tau(\rho))
-\mathscr K_1^R(x,x_{i-1})
\right]
\bigl(f(\rho)-Q_{\flat,\mathscr{M}}^\mu f(\rho)\bigr)
(b-\rho)^{\lambda-1}\,d\rho.
\]
Since \(a(x)+\mathscr K_1^R(x,x_{i-1})\) is independent of \(\rho\) on
\(\varrho_{i,\mu}\), Lemma~\ref{lem:4.2R} gives
\begin{equation}\label{eq:4.11R}
\sum_{i=j+2}^{\mathscr{M}-1}|\Theta_{j,i;x}^{1}|
\le
C
\begin{cases}
\varpi\bigl(q(1+\mu\min\{\flat,\gamma\}),\flat+1;\mathscr{M}\bigr),
& \flat\ne\gamma,\\[1mm]
\varpi\bigl(q(\mu\flat+1),\flat+1;\mathscr{M}\bigr)\ln\mathscr{M},
& \flat=\gamma.
\end{cases}
\end{equation}

For the remaining part, using the definition of \(\mathscr K_1^R\) in
\eqref{eq:K1R-def}, we have
\begin{align}
\sum_{i=j+2}^{\mathscr{M}-1}|\Theta_{j,i;x}^{2}|
&\le
C\sum_{i=j+2}^{\mathscr{M}-1}
\|f-Q_{\flat,\mathscr{M}}^\mu f\|_{L^\infty(\varrho_{i,\mu})}
\nonumber\\
&\quad\times
\int_{\rho_{i-1}}^{\rho_i}
\int_{x_{i-1}}^{\tau(\rho)}
(\zeta-x)^{\nu-1}(b-\rho)^{\lambda-1}\,d\zeta\,d\rho .
\label{eq:4.12R}
\end{align}
For \(i\ge j+2\), we have
\(\zeta\ge x_{i-1}>x_j\ge x\). Since \(0<\nu<1\),
\[
(\zeta-x)^{\nu-1}
\le
(x_{i-1}-x_j)^{\nu-1}.
\]
Therefore,
\begin{equation}\label{eq:4.13R}
\int_{\rho_{i-1}}^{\rho_i}
\int_{x_{i-1}}^{\tau(\rho)}
(\zeta-x)^{\nu-1}(b-\rho)^{\lambda-1}\,d\zeta\,d\rho
\le
C(x_{i-1}-x_j)^{\nu-1}h_i^2.
\end{equation}
Moreover, since \(h_i\le h_j\) for \(i\ge j\),
\[
x_{i-1}-x_j
=
\sum_{k=j+1}^{i-1}h_k
\ge
(i-j-1)h_i.
\]
Combining this with \eqref{eq:4.12R} and \eqref{eq:4.13R}, we obtain
\begin{equation}\label{eq:4.14R}
\sum_{i=j+2}^{\mathscr{M}-1}|\Theta_{j,i;x}^{2}|
\le
C
\sum_{i=j+2}^{\mathscr{M}-1}
h_i^{1+\nu}(i-j-1)^{\nu-1}
\|f-Q_{\flat,\mathscr{M}}^\mu f\|_{L^\infty(\varrho_{i,\mu})}.
\end{equation}
By the mesh definition and the \corr{mean value theorem},
\[
h_i^{1+\nu}
\le
C\mathscr{M}^{-q(1+\nu)}
(1+\mathscr{M}-i)^{(q-1)(1+\nu)}.
\]
Thus Lemmas~\ref{lem:2.1R} and~\ref{lem:2.2R} imply
\[
\sum_{i=j+2}^{\mathscr{M}-1}|\Theta_{j,i;x}^{2}|
\le
C\mathscr{M}^{-q(1+\nu)}
\sum_{i=j+2}^{\mathscr{M}-1}
(i-j-1)^{\nu-1}
\]
\[
\times
\begin{cases}
\mathscr{M}^{-q\mu\flat}
(1+\mathscr{M}-i)^{q\mu\flat-\flat+(q-1)(1+\nu)},
& \flat<\gamma,\\[1mm]
\mathscr{M}^{-q\mu\flat}
(1+\mathscr{M}-i)^{q\mu\flat-\flat+(q-1)(1+\nu)}
\ln\mathscr{M},
& \flat=\gamma,\\[1mm]
\mathscr{M}^{-q\mu\gamma}
(1+\mathscr{M}-i)^{q\mu\gamma-\flat+(q-1)(1+\nu)},
& \flat>\gamma.
\end{cases}
\]

Consider first \(\flat>\gamma\). Applying Lemma~\ref{lem:4.1R} with
\(a=\nu-1\) and
\(t=q\mu\gamma-\flat+(q-1)(1+\nu)\)
to the last sum gives
\[
\sum_{i=j+2}^{\mathscr{M}-1}|\Theta_{j,i;x}^{2}|
\le
C\mathscr{M}^{-q(\mu\gamma+1+\nu)}
\begin{cases}
(\mathscr{M}-j)^{\nu-1},
& q(\mu\gamma+1+\nu)<\flat+\nu,\\[1mm]
(\mathscr{M}-j)^{\nu-1}\ln(\mathscr{M}-j+1),
& q(\mu\gamma+1+\nu)=\flat+\nu,\\[1mm]
(\mathscr{M}-j)^{q(\mu\gamma+1+\nu)-\flat-1},
& q(\mu\gamma+1+\nu)>\flat+\nu.
\end{cases}
\]
Consequently,
\begin{equation}\label{eq:4.15R}
\sum_{i=j+2}^{\mathscr{M}-1}|\Theta_{j,i;x}^{2}|
\le
C
\begin{cases}
\mathscr{M}^{-q(\mu\gamma+1+\nu)},
& q(\mu\gamma+1)\le\flat+1-q\nu,\\[1mm]
\mathscr{M}^{-\flat-1},
& q(\mu\gamma+1)>\flat+1-q\nu.
\end{cases}
\end{equation}
On the other hand, \eqref{eq:4.11R} gives, for \(\flat>\gamma\),
\begin{equation}\label{eq:4.16R}
\sum_{i=j+2}^{\mathscr{M}-1}|\Theta_{j,i;x}^{1}|
\le
C
\begin{cases}
\mathscr{M}^{-q(\mu\gamma+1)},
& q(\mu\gamma+1)<\flat+1,\\[1mm]
\mathscr{M}^{-\flat-1}\ln\mathscr{M},
& q(\mu\gamma+1)=\flat+1,\\[1mm]
\mathscr{M}^{-\flat-1},
& q(\mu\gamma+1)>\flat+1.
\end{cases}
\end{equation}
\corr{Since the bound in \eqref{eq:4.15R} is no larger than that in
\eqref{eq:4.16R},} it follows that
\begin{equation}\label{eq:4.17R}
\sum_{i=j+2}^{\mathscr{M}-1}|\Theta_{j,i;x}|
\le
C
\begin{cases}
\mathscr{M}^{-q(\mu\gamma+1)},
& q(\mu\gamma+1)<\flat+1,\\[1mm]
\mathscr{M}^{-\flat-1}\ln\mathscr{M},
& q(\mu\gamma+1)=\flat+1,\\[1mm]
\mathscr{M}^{-\flat-1},
& q(\mu\gamma+1)>\flat+1.
\end{cases}
\end{equation}
Combining \eqref{eq:4.10R} and \eqref{eq:4.17R}, we get
\[
\sum_{i=j}^{\mathscr{M}}|\Theta_{j,i;x}|
\le
C\varpi(q(\mu\gamma+1),\flat+1;\mathscr{M}),
\qquad \flat>\gamma.
\]

The cases \(\flat\le\gamma\) follow from the same argument. Hence
\[
\sum_{i=j}^{\mathscr{M}}|\Theta_{j,i;x}|
\le
C
\begin{cases}
\varpi(q(\mu\flat+1),\flat+1;\mathscr{M}),
& \flat<\gamma,\\[1mm]
\varpi(q(\mu\flat+1),\flat+1;\mathscr{M})\ln\mathscr{M},
& \flat=\gamma.
\end{cases}
\]
The assertion follows from \eqref{eq:4.9R}.
\end{proof}

\section{Error analysis of the collocation solution}
\label{sec5}

In this section, we establish error estimates for the fractional backward
collocation solution determined by \eqref{eq:3.1aR}--\eqref{eq:3.1bR}. The
analysis is based on the Volterra reformulation
\eqref{eq:v-equation-right-regularity}, where the kernel \(\mathscr K_1^R\)
is defined in \eqref{eq:K1R-def}. By \eqref{eq:K1R-bound},
\(\mathscr K_1^R\) is bounded on \(D_R\).

Define the linear Volterra operator
\(\mathcal V:L^\infty(\Lambda)\to L^\infty(\Lambda)\) by
\begin{equation}\label{eq:5.KopR}
(\mathcal V f)(x)
:=
-\int_x^b
\left[a(x)+\mathscr K_1^R(x,s)\right]f(s)\,ds,
\qquad x\in\Lambda .
\end{equation}
Since \(a\in C(\Lambda)\) and \(\mathscr K_1^R\) is continuous and bounded on
\(D_R\), the operator \(\mathcal V\) \corr{maps} bounded subsets of
\(L^\infty(\Lambda)\) into uniformly bounded and equicontinuous subsets of
\(C(\Lambda)\). Consequently, 
\(\mathcal V\) is compact as an operator from
\(L^\infty(\Lambda)\) into \(L^\infty(\Lambda)\).

Let \(\flat\ge2\). We employ the graded mesh
\(\mathscr{P}_{\mathscr{M}}\), the collocation points \eqref{eq:2.1R}, and
the fractional interpolation operator
\(I_{\flat,\mathscr{M}}^{\mu}:C(\Lambda)\to
S_\flat^\mu(\mathscr{P}_{\mathscr{M}})\)
introduced in Section~\ref{sec2}. Since \(\mathcal V f\in C(\Lambda)\) for
all \(f\in L^\infty(\Lambda)\), the operator
\(I_{\flat,\mathscr{M}}^{\mu}\mathcal V\) is well defined on
\(L^\infty(\Lambda)\). Let \(\mathcal I\) be the identity operator. We shall
estimate
\[
\mathcal I-I_{\flat,\mathscr{M}}^{\mu}\mathcal V:
L^\infty(\Lambda)\to L^\infty(\Lambda).
\]
For a bounded linear operator
\(S:L^\infty(\Lambda)\to L^\infty(\Lambda)\), we denote its operator norm by
\(\|S\|_{L^\infty(\Lambda)\to L^\infty(\Lambda)}\).

\begin{lemma}\label{lem:5.1R}
There exist \(\mathscr{M}_0\in\mathbb N\) and a constant \(C>0\), independent of
\(\mathscr{M}\), such that, for all \(\mathscr{M}>\mathscr{M}_0\),
\(\left(\mathcal I-I_{\flat,\mathscr{M}}^{\mu}\mathcal V\right)^{-1}:
L^\infty(\Lambda)\to L^\infty(\Lambda)\)
exists and
\[
\sup_{\mathscr{M}>\mathscr{M}_0}
\left\|
\left(\mathcal I-I_{\flat,\mathscr{M}}^{\mu}\mathcal V\right)^{-1}
\right\|_{L^\infty(\Lambda)\to L^\infty(\Lambda)}
\le C .
\]
\end{lemma}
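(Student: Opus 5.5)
The plan is the classical argument for collocation methods applied to compact Volterra operators: show that $I_{\flat,\mathscr{M}}^{\mu}\mathcal V\to\mathcal V$ in operator norm, show that $\mathcal I-\mathcal V$ is invertible, and conclude with a Neumann-series perturbation bound. Both steps use the compactness of $\mathcal V$ already noted above, together with the uniform boundedness of the local Lagrange bases.

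\emph{Step 1: $\mathcal I-\mathcal V$ is invertible on $L^\infty(\Lambda)$.}
Since $\mathcal V$ is compact, the Fredholm alternative reduces this to injectivity. Suppose $f=\mathcal V f$. Then
\[
|f(x)|\le C_0\int_x^b|f(s)|\,ds ,
\]
because $a$ is bounded and $\mathscr K_1^R$ is bounded by \eqref{eq:K1R-bound}. A backward Gronwall argument then gives $f\equiv0$. Equivalently, one can bound the iterates by $\|\mathcal V^n\|\le (C_0b)^n/n!$, which makes the Neumann series for $(\mathcal I-\mathcal V)^{-1}$ converge. Either way, $\|(\mathcal I-\mathcal V)^{-1}\|$ is finite.

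\emph{Step 2: $\|\mathcal V-I_{\flat,\mathscr{M}}^{\mu}\mathcal V\|_{L^\infty\to L^\infty}\to0$.}
First, the family $\{I_{\flat,\mathscr{M}}^{\mu}\}$ is uniformly bounded on $C(\Lambda)$. By \eqref{eq:2.2R}, on each $\varrho_j$ the basis $L^\mu_{j,l}(\tau(\rho))$ is an ordinary Lagrange basis on affine images of the fixed parameters $\xi_l$. Hence $\sum_l|L^\mu_{j,l}|$ is bounded by a Lebesgue constant independent of $j$ and $\mathscr M$, as in the proof of Lemma~\ref{lem:2.2R}.

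Second, let $B$ be the unit ball of $L^\infty(\Lambda)$. The set $\mathcal V(B)$ is bounded and equicontinuous in $C(\Lambda)$, as shown above. For $g$ in this set,
\[
\|g-I^\mu_{\flat,\mathscr{M}}g\|_\infty\le (1+\Lambda_\flat)\,\omega\bigl(g,\max_j h_j\bigr),
\]
where $\Lambda_\flat$ is the Lebesgue constant and $\omega$ is the modulus of continuity. This uses that $I^\mu_{\flat,\mathscr{M}}$ reproduces constants, which lie in $P^\mu_{\flat-1}$. The collocation points lie in $\overline{\varrho_j}$, so this bound is local. Since $\max_jh_j\le C\mathscr M^{-1}$ for $q\ge1$, and the moduli are uniform over $\mathcal V(B)$, we get
\[
\sup_{f\in B}\|(\mathcal V-I^\mu_{\flat,\mathscr{M}}\mathcal V)f\|_\infty\to0 .
\]

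\emph{Step 3: perturbation.}
Write
\[
\mathcal I-I^\mu_{\flat,\mathscr{M}}\mathcal V=(\mathcal I-\mathcal V)\bigl[\mathcal I+(\mathcal I-\mathcal V)^{-1}(\mathcal V-I^\mu_{\flat,\mathscr{M}}\mathcal V)\bigr].
\]
Choose $\mathscr M_0$ so that $\|(\mathcal I-\mathcal V)^{-1}\|\,\|\mathcal V-I^\mu_{\flat,\mathscr{M}}\mathcal V\|\le\frac12$ for $\mathscr M>\mathscr M_0$. A Neumann series then gives
\[
\bigl\|(\mathcal I-I^\mu_{\flat,\mathscr{M}}\mathcal V)^{-1}\bigr\|\le2\|(\mathcal I-\mathcal V)^{-1}\|=:C .
\]

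\emph{Main obstacle.}
The delicate point is Step 2, specifically the equicontinuity of $\mathcal V(B)$ uniformly in $x$. This needs continuity of $x\mapsto a(x)+\mathscr K_1^R(x,s)$ uniformly in $s$, and control of the moving lower limit $x$. I would handle it as follows. Split $(\mathcal V f)(x)-(\mathcal V f)(x')$ into two parts:
\begin{itemize}
\item an integral over $[x,x']$, bounded by $C|x-x'|$;
\item a term involving $\sup_s|\mathscr K_1^R(x,s)-\mathscr K_1^R(x',s)|$.
\end{itemize}
For the second term, change variables $\zeta=x+y$ in \eqref{eq:K1R-def}. This writes $\mathscr K_1^R$ as the integral of $y^{\nu-1}\mathscr K(x,x+y)$, and uniform continuity of $\mathscr K$ on $D_R$ plus a dominated-convergence argument give uniform continuity of $\mathscr K_1^R$ on $D_R$.
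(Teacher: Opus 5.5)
Your proposal is correct and follows the paper's route: you prove injectivity of $\mathcal I-\mathcal V$ and apply the Fredholm alternative, obtain $\|\mathcal V-I^{\mu}_{\flat,\mathscr M}\mathcal V\|_{L^\infty(\Lambda)\to L^\infty(\Lambda)}\to0$ from the equicontinuity of $\mathcal V$ applied to the unit ball together with uniformly bounded Lebesgue constants, and finish with a Neumann-series perturbation bound. The differences are small: you prove directly, via the modulus-of-continuity estimate and an explicit Neumann series, what the paper cites from Brunner's Lemma~3.2 and Atkinson's Theorem~3.1.1, and you use a Gronwall (iterated-kernel) bound where the paper uses a stepwise contraction on intervals of length $\delta$.
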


\begin{proof}
We first verify that \(\mathcal I-\mathcal V\) is injective. Let
\(z\in L^\infty(\Lambda)\) satisfy
\[
(\mathcal I-\mathcal V)z=0 .
\]
By the definition of \(\mathcal V\) in \eqref{eq:5.KopR},
\[
z(x)
=
-\int_x^b
\left[a(x)+\mathscr K_1^R(x,s)\right]z(s)\,ds,
\qquad x\in\Lambda .
\]
Since \(a\in C(\Lambda)\) and \(\mathscr K_1^R\) satisfies
\eqref{eq:K1R-bound}, there is a constant \(C_1>0\) such that
\[
|a(x)|+|\mathscr K_1^R(x,s)|\le C_1,
\qquad 0\le x\le s\le b .
\]
Thus
\[
|z(x)|
\le
C_1\int_x^b|z(s)|\,ds,
\qquad x\in\Lambda .
\]
Choose \(\corr{\delta:=\min\{b,(2C_1)^{-1}\}}\). Then
\[
\|z\|_{L^\infty(b-\delta,b)}
\le
C_1\delta\,
\|z\|_{L^\infty(b-\delta,b)}
\le
\frac12
\|z\|_{L^\infty(b-\delta,b)}.
\]
Hence \(z=0\) on \([b-\delta,b]\). Repeating the same argument on successive
intervals of length \(\delta\), moving backward from \(b\) to \(0\), gives
\(z=0\) on \(\Lambda\). Therefore, the homogeneous equation has only the trivial
solution.

Since \(\mathcal V\) is compact on \(L^\infty(\Lambda)\), the Fredholm alternative
implies that
\[
\mathcal I-\mathcal V:L^\infty(\Lambda)\to L^\infty(\Lambda)
\]
is invertible. \corr{Moreover,} the compactness of \(\mathcal V\), together with the
uniform convergence of the interpolation operator on compact subsets of
\(C(\Lambda)\); see \cite[Lemma~3.2]{brunner2001piecewise}, gives
\[
\lim_{\mathscr{M}\to\infty}
\left\|
\mathcal V-I_{\flat,\mathscr{M}}^{\mu}\mathcal V
\right\|_{L^\infty(\Lambda)\to L^\infty(\Lambda)}
=0 .
\]
Consequently, by the \corr{approximation result}
\cite[Theorem~3.1.1]{Atkinson_1997}, there exist
\(\mathscr{M}_0\in\mathbb{N}\) and a constant \(C>0\), independent of
\(\mathscr{M}\), such that, for all \(\mathscr{M}>\mathscr{M}_0\), the operator
\[
\left(\mathcal I-I_{\flat,\mathscr{M}}^{\mu}\mathcal V\right)^{-1}
:
L^\infty(\Lambda)\to L^\infty(\Lambda)
\]
exists and satisfies
\[
\sup_{\mathscr{M}>\mathscr{M}_0}
\left\|
\left(\mathcal I-I_{\flat,\mathscr{M}}^{\mu}\mathcal V\right)^{-1}
\right\|_{L^\infty(\Lambda)\to L^\infty(\Lambda)}
\le C .
\]
The proof is complete.
\end{proof}

We first establish the convergence estimate for arbitrary collocation
parameters \(0\le\xi_1<\cdots<\xi_\flat\le1\).

\subsection{Error estimates for general collocation parameters}

\begin{theorem}\label{thm:5.1R}
Suppose that
\[
\widehat{\psi}(\rho):=\psi(\tau(\rho))
\in C^{\flat,1-\gamma}[b-b^\mu,b).
\]
Then there exist an integer \(\mathscr{M}_0\in\mathbb N\) and a constant
\(C\), independent of \(\mathscr{M}\), such that, whenever
\(\mathscr{M}>\mathscr{M}_0\), the collocation approximations
\(\psi_h\) and \(\varphi_h\), defined by
\eqref{eq:3.1aR}--\eqref{eq:3.1bR}, satisfy
\begin{equation}\label{eq:5.1R}
\|\psi-\psi_h\|_{L^\infty(\Lambda)}
\le
C
\begin{cases}
\mathscr{M}^{-\min\{q\mu\flat,\flat\}}, & \flat<\gamma,\\[1mm]
\mathscr{M}^{-\min\{q\mu\flat,\flat\}}\ln\mathscr{M}, & \flat=\gamma,\\[1mm]
\mathscr{M}^{-\min\{q\mu\gamma,\flat\}}, & \flat>\gamma,
\end{cases}
\end{equation}
and
\begin{equation}\label{eq:5.2R}
\|\varphi-\varphi_h\|_{L^\infty(\Lambda)}
\le
C
\begin{cases}
\varpi\bigl(q(1+\mu\min\{\flat,\gamma\}),\flat;\mathscr{M}\bigr),
& \flat\ne\gamma,\\[1mm]
\varpi\bigl(q(\mu\flat+1),\flat;\mathscr{M}\bigr)\ln\mathscr{M},
& \flat=\gamma.
\end{cases}
\end{equation}
\end{theorem}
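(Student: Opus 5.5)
The plan is to recast the collocation scheme as an operator equation for $\psi_h$ that mirrors the Volterra reformulation \eqref{eq:v-equation-right-regularity}, and then apply Lemma~\ref{lem:5.1R}.

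\textbf{Step 1: operator form of the scheme.} Since $\varphi_h(x)=\varphi_b-\int_x^b\psi_h(s)\,ds$ by \eqref{eq:3.1bR}, substituting into the collocation equations \eqref{eq:3.3R-coll} shows that $\psi_h\in S_\flat^\mu(\mathscr{P}_{\mathscr{M}})$ satisfies
\[
\psi_h(x_{j,l})=\chi_R(x_{j,l})+(\mathcal V\psi_h)(x_{j,l}).
\]
Here we use the same Fubini step that produced $\mathscr K_1^R$ in \eqref{eq:K1R-def}. Because $\psi_h$ is determined by its values at the collocation points, this is equivalent to
\[
\psi_h=I_{\flat,\mathscr{M}}^\mu\chi_R+I_{\flat,\mathscr{M}}^\mu\mathcal V\psi_h .
\]
The exact solution satisfies $\psi=\chi_R+\mathcal V\psi$. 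Applying $I_{\flat,\mathscr{M}}^\mu$ and subtracting, the error $e:=\psi-\psi_h$ satisfies
\[
(\mathcal I-I_{\flat,\mathscr{M}}^\mu\mathcal V)e=\psi-I_{\flat,\mathscr{M}}^\mu\psi=:\varepsilon_{\mathscr M}.
\]

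\textbf{Step 2: the estimate \eqref{eq:5.1R} for $\psi$.} Lemma~\ref{lem:5.1R} gives $\|e\|_\infty\le C\|\varepsilon_{\mathscr M}\|_\infty$ for $\mathscr M>\mathscr M_0$. By \eqref{eq:2.3R}, on each $\varrho_j$ we have $\varepsilon_{\mathscr M}(\tau(\rho))=\widehat\psi(\rho)-Q^\mu_{\flat,\mathscr M}\widehat\psi(\rho)$. The hypothesis $\widehat\psi\in C^{\flat,1-\gamma}$ together with Lemma~\ref{lem:2.2R} then yields \eqref{eq:5.1R} directly. This step also establishes unique solvability of \eqref{eq:3.3R} for large $\mathscr M$.

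\textbf{Step 3: the estimate \eqref{eq:5.2R} for $\varphi$.} We write
\[
\varphi(x)-\varphi_h(x)=-\int_x^b e(s)\,ds .
\]
A crude bound $b\|e\|_\infty$ only reproduces \eqref{eq:5.1R}, but the target rate is $\min\{q(1+\mu\min\{\flat,\gamma\}),\flat\}$, so a better argument is needed. Solving the error equation as $e=\varepsilon_{\mathscr M}+I^\mu_{\flat,\mathscr M}\mathcal Ve$ gives
\[
\int_x^b e=\int_x^b\varepsilon_{\mathscr M}+\int_x^b I^\mu_{\flat,\mathscr M}\mathcal V e .
\]

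For the first term we change variables $s=\tau(\rho)$, with $ds=\lambda(b-\rho)^{\lambda-1}d\rho$ and $\lambda=1/\mu$. This turns it into a sum over the transformed cells of integrals of the form
\[
\int\bigl(\widehat\psi-Q^\mu_{\flat,\mathscr M}\widehat\psi\bigr)(b-\rho)^{\lambda-1}\,d\rho .
\]
Without the orthogonality condition \eqref{eq:4.1R}, we bound each cell by $h_i\|\widehat\psi-Q\widehat\psi\|_{L^\infty(\varrho_{i,\mu})}$. Lemma~\ref{lem:2.3R} gives the cellwise bounds $\mathscr M^{-q(\mu\gamma+1)}(\mathscr M-i+1)^{q(\mu\gamma+1)-\flat-1}$. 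Summing over $i$ via Lemma~\ref{lem:4.1R} with $a=0$ and $t=q(\mu\gamma+1)-\flat-1$ produces exactly $\varpi(q(1+\mu\gamma),\flat;\mathscr M)$, including the logarithm in the borderline case. The analogous computation handles $\flat\le\gamma$.

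For the second term we write $I^\mu_{\flat,\mathscr M}\mathcal Ve=\mathcal Ve-(\mathcal I-I^\mu_{\flat,\mathscr M})\mathcal Ve$. We have $\mathcal Ve=-\int_x^b[a+\mathscr K_1^R]e$. To gain order we instead use the representation
\[
e=(\mathcal I-\mathcal V)^{-1}\varepsilon_{\mathscr M}+(\mathcal I-\mathcal V)^{-1}(I^\mu_{\flat,\mathscr M}-\mathcal I)\mathcal Ve .
\]
Here $(\mathcal I-\mathcal V)^{-1}=\mathcal I+\mathcal R$, with $\mathcal R$ a Volterra integral operator whose resolvent kernel is bounded. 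The integrated quantity $\int_x^b e$ then reduces to integrals of $\varepsilon_{\mathscr M}$ against bounded kernels, which are estimated exactly as above (cellwise Lemma~\ref{lem:2.3R} plus summation by Lemma~\ref{lem:4.1R}). The remaining piece involves $(\mathcal I-I^\mu_{\flat,\mathscr M})\mathcal Ve$. Since $\mathcal Ve$ is Lipschitz in $x$ with constant $C\|e\|_\infty$ (using the Hölder continuity of $\mathscr K_1^R$), its interpolation error is $O(h\|e\|_\infty)$, which is of higher order.

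\textbf{Main obstacle.} I expect the hardest part to be this last step: controlling the interpolation error of $\mathcal Ve$ near $b$ on the fine cells, where the $(b-\rho)^{\lambda-1}$ Jacobian and the weak regularity of $\mathscr K_1^R$ interact. If the crude Lipschitz bound proves insufficient, I would treat the last two cells separately, bounding them by $(b-x_{\mathscr M-2})\|e\|_\infty\le C\mathscr M^{-q}\|e\|_\infty$, exactly as in the proof of Lemma~\ref{lem:4.3R}.
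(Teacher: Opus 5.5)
\textbf{Gap in Step 3.} Your Steps 1 and 2 are correct and agree with the paper. Since $I^\mu_{\flat,\mathscr M}\psi-\psi_h=e-\varepsilon_{\mathscr M}$, your error equation is \eqref{eq:5.4R} in another form. Your bound on $\int_x^b\varepsilon_{\mathscr M}$ in Step 3 is also what the paper does. The problem is the last piece of Step 3.

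After going through $(\mathcal I-\mathcal V)^{-1}$, you are left with $(\mathcal I-I^\mu_{\flat,\mathscr M})\mathcal Ve$, which you bound by $Ch\|e\|_{L^\infty(\Lambda)}$. On the graded mesh the largest cells sit near $x=0$, so $h=h_1\sim\mathscr M^{-1}$. By contrast, the bound \eqref{eq:5.1R} for $\|e\|_{L^\infty(\Lambda)}$ is driven by the fine cells near $b$.

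Set $\kappa:=\min\{\flat,\gamma\}$. Your bound then gives only the order $1+\min\{q\mu\kappa,\flat\}$. This is below the target $\min\{q(1+\mu\kappa),\flat\}$ whenever $q>1$ and $1+q\mu\kappa<\flat$. For example, $\flat=3$, $\gamma=1/\mu=\sqrt3$, $q=3/2$ gives $5/2$ instead of $3$ (up to a logarithm). So gaining one extra order is not enough.

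Your fallback of splitting off the last two cells works on the wrong end of the mesh. The loss comes from multiplying the coarse cells near $0$ by the global maximum of $e$. More fundamentally, once $e$ reappears on the right-hand side in the sup norm, you lose the $L^1$-smallness of $\varepsilon_{\mathscr M}$, which is the only source of the extra order.

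\textbf{The missing observation.} The quantity $e-\varepsilon_{\mathscr M}=I^\mu_{\flat,\mathscr M}\mathcal Ve$ satisfies a collocation-type equation driven by $\varepsilon_{\mathscr M}$ alone:
\[
(\mathcal I-I^\mu_{\flat,\mathscr M}\mathcal V)(e-\varepsilon_{\mathscr M})=I^\mu_{\flat,\mathscr M}\mathcal V\varepsilon_{\mathscr M}.
\]
This is exactly \eqref{eq:5.4R}. Now use Lemma~\ref{lem:5.1R}, the uniform boundedness of $I^\mu_{\flat,\mathscr M}$, and the boundedness of the kernel $a(x)+\mathscr K_1^R(x,s)$ from \eqref{eq:K1R-bound}. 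This gives
\[
\|e-\varepsilon_{\mathscr M}\|_{L^\infty(\Lambda)}\le C\|\mathcal V\varepsilon_{\mathscr M}\|_{L^\infty(\Lambda)}\le C\int_0^b|\varepsilon_{\mathscr M}(s)|\,ds .
\]
Hence
\[
|\varphi(x)-\varphi_h(x)|\le\int_x^b|\varepsilon_{\mathscr M}(s)|\,ds+b\|e-\varepsilon_{\mathscr M}\|_{L^\infty(\Lambda)}\le C\sum_{i=1}^{\mathscr M}h_i\bigl\|\widehat\psi-Q^\mu_{\flat,\mathscr M}\widehat\psi\bigr\|_{L^\infty(\varrho_{i,\mu})},
\]
and your cellwise summation finishes the proof. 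This is the paper's route via \eqref{eq:5.5R}. It needs neither the resolvent of $\mathcal I-\mathcal V$ nor any Lipschitz estimate for $\mathcal Ve$.
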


\begin{proof}
By the Volterra reformulation \eqref{eq:v-equation-right-regularity}, the
exact derivative \(\psi=\varphi'\) satisfies
\begin{equation}\label{eq:5.3R}
\psi=\chi_R+\mathcal V\psi .
\end{equation}
The collocation equations impose the same relation at the points \(x_{j,l}\).
Since \(\psi_h\in S_\flat^\mu(\mathscr{P}_{\mathscr{M}})\), this gives
\[
\psi_h
=
I_{\flat,\mathscr{M}}^{\mu}\chi_R
+
I_{\flat,\mathscr{M}}^{\mu}\mathcal V\psi_h .
\]
Applying \(I_{\flat,\mathscr{M}}^{\mu}\) to \eqref{eq:5.3R}, we also have
\[
I_{\flat,\mathscr{M}}^{\mu}\psi
=
I_{\flat,\mathscr{M}}^{\mu}\chi_R
+
I_{\flat,\mathscr{M}}^{\mu}\mathcal V\psi .
\]
Subtracting the two identities and writing
\[
\psi-\psi_h
=
\psi-I_{\flat,\mathscr{M}}^{\mu}\psi
+
I_{\flat,\mathscr{M}}^{\mu}\psi-\psi_h,
\]
we obtain
\begin{equation}\label{eq:5.4R}
\left(\mathcal I-I_{\flat,\mathscr{M}}^{\mu}\mathcal V\right)
\left(I_{\flat,\mathscr{M}}^{\mu}\psi-\psi_h\right)
=
I_{\flat,\mathscr{M}}^{\mu}\mathcal V
\left(\psi-I_{\flat,\mathscr{M}}^{\mu}\psi\right).
\end{equation}
Let \(\mathscr{M}_0\) be chosen as in Lemma~\ref{lem:5.1R}. For
\(\mathscr{M}>\mathscr{M}_0\), \eqref{eq:5.4R} yields
\begin{equation}\label{eq:5.5R}
I_{\flat,\mathscr{M}}^{\mu}\psi-\psi_h
=
\left(\mathcal I-I_{\flat,\mathscr{M}}^{\mu}\mathcal V\right)^{-1}
I_{\flat,\mathscr{M}}^{\mu}\mathcal V
\left(\psi-I_{\flat,\mathscr{M}}^{\mu}\psi\right).
\end{equation}
Using Lemma~\ref{lem:5.1R} and the uniform boundedness of
\(I_{\flat,\mathscr{M}}^{\mu}\), we get
\[
\left\|
I_{\flat,\mathscr{M}}^{\mu}\psi-\psi_h
\right\|_{L^\infty(\Lambda)}
\le
C
\left\|
\mathcal V
\left(\psi-I_{\flat,\mathscr{M}}^{\mu}\psi\right)
\right\|_{L^\infty(\Lambda)} .
\]
By the definition of \(\mathcal V\), the change of variable
\(s=\tau(\rho)\), and \eqref{eq:2.3R},
\[
\psi(s)-I_{\flat,\mathscr{M}}^{\mu}\psi(s)
=
\widehat{\psi}(\rho)
-
Q_{\flat,\mathscr{M}}^{\mu}\widehat{\psi}(\rho),
\qquad s=\tau(\rho).
\]
Thus, with \(\lambda=1/\mu\), \corr{and absorbing the constant factor
\(\lambda\) into \(C\),} we obtain
\begin{equation}\label{eq:5.6R}
\begin{aligned}
&
\left\|
I_{\flat,\mathscr{M}}^{\mu}\psi-\psi_h
\right\|_{L^\infty(\Lambda)}
\\
&\quad\le
C
\left\|
\int_{\rho(x)}^b
\left[a(x)+\mathscr K_1^R(x,\tau(\rho))\right]
\left(
\widehat{\psi}(\rho)
-
Q_{\flat,\mathscr{M}}^{\mu}\widehat{\psi}(\rho)
\right)
(b-\rho)^{\lambda-1}\,d\rho
\right\|_{L^\infty(\Lambda)} .
\end{aligned}
\end{equation}
The boundedness of \(a\) on \(\Lambda\) and the estimate
\eqref{eq:K1R-bound} imply
\[
\left\|
I_{\flat,\mathscr{M}}^{\mu}\psi-\psi_h
\right\|_{L^\infty(\Lambda)}
\le
C
\left\|
\widehat{\psi}
-
Q_{\flat,\mathscr{M}}^{\mu}\widehat{\psi}
\right\|_{L^\infty[b-b^\mu,b]} .
\]
Consequently, by the triangle inequality, \eqref{eq:2.3R}, and
Lemma~\ref{lem:2.2R},
\[
\begin{aligned}
\|\psi-\psi_h\|_{L^\infty(\Lambda)}
&\le
\|\psi-I_{\flat,\mathscr{M}}^{\mu}\psi\|_{L^\infty(\Lambda)}
+
\|I_{\flat,\mathscr{M}}^{\mu}\psi-\psi_h\|_{L^\infty(\Lambda)}
\\
&\le
C
\left\|
\widehat{\psi}
-
Q_{\flat,\mathscr{M}}^{\mu}\widehat{\psi}
\right\|_{L^\infty[b-b^\mu,b]}
\\
&\le
C
\begin{cases}
\mathscr{M}^{-\min\{q\mu\flat,\flat\}}, & \flat<\gamma,\\[1mm]
\mathscr{M}^{-\min\{q\mu\flat,\flat\}}\ln\mathscr{M},
& \flat=\gamma,\\[1mm]
\mathscr{M}^{-\min\{q\mu\gamma,\flat\}}, & \flat>\gamma.
\end{cases}
\end{aligned}
\]
This proves \eqref{eq:5.1R}.

It remains to estimate \(\varphi-\varphi_h\). From the terminal
representations of \(\varphi\) and \(\varphi_h\),
\[
\varphi(x)-\varphi_h(x)
=
-\int_x^b\bigl(\psi(s)-\psi_h(s)\bigr)\,ds .
\]
\corr{Using \eqref{eq:5.5R}, Lemma~\ref{lem:5.1R}, and the Volterra
structure of the resolvent,} we obtain
\[
|\varphi(x)-\varphi_h(x)|
\le
C
\int_x^b
\left|
\psi(s)-I_{\flat,\mathscr{M}}^{\mu}\psi(s)
\right|\,ds .
\]
For \(x\in\varrho_j\), splitting the last integral over the mesh intervals gives
\[
|\varphi(x)-\varphi_h(x)|
\le
C
\sum_{i=j}^{\mathscr{M}}
h_i
\left\|
\psi-I_{\flat,\mathscr{M}}^{\mu}\psi
\right\|_{L^\infty(\varrho_i)} .
\]
Using \eqref{eq:2.3R}, this becomes
\[
|\varphi(x)-\varphi_h(x)|
\le
C
\sum_{i=j}^{\mathscr{M}}
h_i
\left\|
\widehat{\psi}
-
Q_{\flat,\mathscr{M}}^{\mu}\widehat{\psi}
\right\|_{L^\infty(\varrho_{i,\mu})}.
\]
\corr{Using the intervalwise estimate in the proof of
Lemma~\ref{lem:2.3R},} we obtain
\[
|\varphi(x)-\varphi_h(x)|
\le
C
\begin{cases}
\mathscr{M}^{-q(\mu\flat+1)}
\displaystyle
\sum_{i=j}^{\mathscr{M}}
(1+\mathscr{M}-i)^{q(\mu\flat+1)-\flat-1},
& \flat<\gamma,\\[4mm]
\mathscr{M}^{-q(\mu\flat+1)}\ln\mathscr{M}
\displaystyle
\sum_{i=j}^{\mathscr{M}}
(1+\mathscr{M}-i)^{q(\mu\flat+1)-\flat-1},
& \flat=\gamma,\\[4mm]
\mathscr{M}^{-q(\mu\gamma+1)}
\displaystyle
\sum_{i=j}^{\mathscr{M}}
(1+\mathscr{M}-i)^{q(\mu\gamma+1)-\flat-1},
& \flat>\gamma.
\end{cases}
\]
For any \(\alpha\in\mathbb R\), the elementary bound
\[
\sum_{i=j}^{\mathscr{M}}
(1+\mathscr{M}-i)^{\alpha}
\le
C
\begin{cases}
1, & \alpha<-1,\\
\ln\mathscr{M}, & \alpha=-1,\\
\mathscr{M}^{\alpha+1}, & \alpha>-1
\end{cases}
\]
gives
\[
|\varphi(x)-\varphi_h(x)|
\le
C\varpi\bigl(q(\mu\flat+1),\flat;\mathscr{M}\bigr),
\qquad \flat<\gamma,
\]
\[
|\varphi(x)-\varphi_h(x)|
\le
C\varpi\bigl(q(\mu\flat+1),\flat;\mathscr{M}\bigr)\ln\mathscr{M},
\qquad \flat=\gamma,
\]
and
\[
|\varphi(x)-\varphi_h(x)|
\le
C\varpi\bigl(q(\mu\gamma+1),\flat;\mathscr{M}\bigr),
\qquad \flat>\gamma.
\]
Therefore,
\[
\|\varphi-\varphi_h\|_{L^\infty(\Lambda)}
\le
C
\begin{cases}
\varpi\bigl(q(1+\mu\min\{\flat,\gamma\}),\flat;\mathscr{M}\bigr),
& \flat\ne\gamma,\\[1mm]
\varpi\bigl(q(\mu\flat+1),\flat;\mathscr{M}\bigr)\ln\mathscr{M},
& \flat=\gamma.
\end{cases}
\]
This proves \eqref{eq:5.2R}. The proof is complete.
\end{proof}

\subsection{Superconvergent estimates under \corr{condition~\eqref{eq:4.1R}}}
\label{subsec:superconvergenceR}

\corr{When the collocation parameters \(\{\xi_k\}_{k=1}^{\flat}\) satisfy
\eqref{eq:4.1R}, the preceding error estimates can be sharpened as follows.}

\begin{theorem}\label{thm:5.2R}
Assume that \(\mathscr K\in C^1(D_R)\) and
\[
\widehat{\psi}(\rho):=\psi(\tau(\rho))
\in C^{\flat+1,1-\gamma}[b-b^\mu,b).
\]
Let the collocation parameters \(\corr{\{\xi_k\}_{k=1}^{\flat}}\) satisfy
\eqref{eq:4.1R}. Then there exist \(\mathscr{M}_0\in\mathbb N\) and a constant
\(C\), independent of \(\mathscr{M}\), such that, for all
\(\mathscr{M}>\mathscr{M}_0\), the collocation
\corr{approximations} \(\psi_h,\varphi_h\) defined by
\eqref{eq:3.1aR}--\eqref{eq:3.1bR} satisfy
\begin{equation}\label{eq:5.7R}
\|\varphi-\varphi_h\|_{L^\infty(\Lambda)}
\le
C
\begin{cases}
\varpi\bigl(q(1+\mu\min\{\flat,\gamma\}),\flat+1;\mathscr{M}\bigr),
& \flat\ne\gamma,\\[1mm]
\varpi\bigl(q(\mu\flat+1),\flat+1;\mathscr{M}\bigr)\ln\mathscr{M},
& \flat=\gamma,
\end{cases}
\end{equation}
\corr{while, at the collocation points,}
\begin{equation}\label{eq:5.8R}
\max_{x\in\cup_{j=1}^{\mathscr{M}}X_j}
|\psi(x)-\psi_h(x)|
\le
C
\begin{cases}
\varpi\bigl(q(1+\mu\min\{\flat,\gamma\}),\flat+1;\mathscr{M}\bigr),
& \flat\ne\gamma,\\[1mm]
\varpi\bigl(q(\mu\flat+1),\flat+1;\mathscr{M}\bigr)\ln\mathscr{M},
& \flat=\gamma,
\end{cases}
\end{equation}
and, at \(x=0\),
\begin{equation}\label{eq:5.9R}
|\psi(0)-\psi_h(0)|
\le
C
\begin{cases}
\varpi\bigl(q(1+\mu\min\{\flat,\gamma\}),\flat;\mathscr{M}\bigr),
& \flat\ne\gamma,\\[1mm]
\varpi\bigl(q(\mu\flat+1),\flat;\mathscr{M}\bigr)\ln\mathscr{M},
& \flat=\gamma.
\end{cases}
\end{equation}
\end{theorem}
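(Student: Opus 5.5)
The plan is to start from the error equation already obtained in the proof of Theorem~\ref{thm:5.1R}. Write $e:=\psi-\psi_h$. Since $\psi=\chi_R+\mathcal V\psi$ holds everywhere and $\psi_h=\chi_R+\mathcal V\psi_h$ holds at the collocation points, we get
\[
e=\mathcal V e\quad\text{on }\cup_jX_j,\qquad\text{and}\qquad e=(\psi-I_{\flat,\mathscr M}^\mu\psi)+\delta_h,\quad \delta_h:=I_{\flat,\mathscr M}^\mu\psi-\psi_h .
\]
By \eqref{eq:5.5R}, $\delta_h=(\mathcal I-I^\mu_{\flat,\mathscr M}\mathcal V)^{-1}I^\mu_{\flat,\mathscr M}\mathcal V(\psi-I^\mu_{\flat,\mathscr M}\psi)$. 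Lemma~\ref{lem:5.1R} and the uniform boundedness of $I^\mu_{\flat,\mathscr M}$ then give
\[
\|\delta_h\|_{L^\infty(\Lambda)}\le C\|\mathcal V(\psi-I^\mu_{\flat,\mathscr M}\psi)\|_{L^\infty(\Lambda)} .
\]
After the change of variables $s=\tau(\rho)$ and \eqref{eq:2.3R}, the right-hand side is exactly the quantity bounded in Lemma~\ref{lem:4.3R} with $f=\widehat\psi\in C^{\flat+1,1-\gamma}$. This is where \eqref{eq:4.1R} and the assumption $\mathscr K\in C^1(D_R)$ enter. So $\|\delta_h\|_{L^\infty}$ is bounded by the superconvergent rate $\Phi_{\flat+1}$, meaning the right-hand side of \eqref{eq:5.8R}.

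For \eqref{eq:5.8R}: at any collocation point $x_{j,l}$ we have $\psi(x_{j,l})=I^\mu_{\flat,\mathscr M}\psi(x_{j,l})$. Hence $e(x_{j,l})=\delta_h(x_{j,l})$, and the bound follows at once from the estimate on $\|\delta_h\|$.

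For \eqref{eq:5.7R}: write
\[
\varphi(x)-\varphi_h(x)=-\int_x^b e(s)\,ds=-\int_x^b(\psi-I^\mu_{\flat,\mathscr M}\psi)(s)\,ds-\int_x^b\delta_h(s)\,ds .
\]
The second term is bounded by $b\|\delta_h\|_{L^\infty}\le C\Phi_{\flat+1}$. The first term, after $s=\tau(\rho)$, equals $\lambda\int_{\rho(x)}^b(\widehat\psi-Q^\mu_{\flat,\mathscr M}\widehat\psi)(b-\rho)^{\lambda-1}d\rho$, and Lemma~\ref{lem:4.2R} bounds it by the same rate. This improves Theorem~\ref{thm:5.1R} by one order in the $\mathscr M^{-\flat}$ cap because of the orthogonality condition \eqref{eq:4.1R}.

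For \eqref{eq:5.9R}: $x=0$ lies in $\varrho_1$, typically not at a collocation point. Write $e(0)=(\psi-I^\mu_{\flat,\mathscr M}\psi)(0)+\delta_h(0)$. The second term is $O(\Phi_{\flat+1})$. The first is a local interpolation error on the first interval, which by \eqref{eq:2.4R} is $O(\mathscr M^{-\flat})$. This is not smaller than $\varpi(\cdot,\flat;\mathscr M)$, and it is dominated by it, since the first argument $q(1+\mu\min\{\flat,\gamma\})$ may be below $\flat$. So the bound is $C\,\varpi(q(1+\mu\min\{\flat,\gamma\}),\flat;\mathscr M)$, with a $\ln\mathscr M$ factor when $\flat=\gamma$. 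If needed, I would refine this by noting that $\delta_h$ is itself piecewise in $P^\mu_{\flat-1}$, and interpolate its values at the collocation points with a uniformly bounded Lebesgue constant.

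The main obstacle is the first step: justifying that the resolvent bound of Lemma~\ref{lem:5.1R} reduces $\|\delta_h\|$ to a sup over $x$ of $|\mathcal V(\psi-I\psi)(x)|$ rather than to $\|\psi-I\psi\|_\infty$. The crude step used in Theorem~\ref{thm:5.1R} would lose the superconvergence. It must be checked that the kernel $a(x)+\mathscr K_1^R(x,\tau(\rho))$ has enough smoothness in $\rho$ for Lemma~\ref{lem:4.3R}; there $\mathscr K\in C^1$ handles the splitting into $\Theta^1$ and $\Theta^2$. The other two estimates are then straightforward.
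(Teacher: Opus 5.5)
Your proposal is correct, and for \eqref{eq:5.7R} and \eqref{eq:5.9R} it coincides with the paper's proof. The paper likewise obtains \eqref{eq:5.10R} by combining \eqref{eq:5.6R} with Lemma~\ref{lem:4.3R}. It bounds $\int_x^b(\psi-I_{\flat,\mathscr M}^{\mu}\psi)\,ds$ by Lemma~\ref{lem:4.2R}, and it treats $x=0$ via \eqref{eq:2.4R} plus \eqref{eq:5.10R}. What you flag as the main obstacle is in fact immediate. Bounding your $\delta_h:=I_{\flat,\mathscr M}^{\mu}\psi-\psi_h$ by $\|\mathcal V(\psi-I_{\flat,\mathscr M}^{\mu}\psi)\|_{L^\infty(\Lambda)}$ is just \eqref{eq:5.5R} with Lemma~\ref{lem:5.1R}, i.e.\ \eqref{eq:5.6R}. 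One simply must not then take the crude sup-norm step used for Theorem~\ref{thm:5.1R}.

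The genuine difference is in \eqref{eq:5.8R}. The paper subtracts the collocation equation \eqref{eq:3.3R-coll} from \eqref{eq:1.1R} at a collocation point. Using boundedness of $a,\mathscr K$ and integrability of $(s-x)^{\nu-1}$, it gets $|\psi(x)-\psi_h(x)|\le C\|\varphi-\varphi_h\|_{L^\infty(\Lambda)}$, and then invokes \eqref{eq:5.7R}. You instead use nodal exactness $I_{\flat,\mathscr M}^{\mu}\psi(x_{j,l})=\psi(x_{j,l})$, so the nodal error equals $\delta_h(x_{j,l})$ and \eqref{eq:5.10R} finishes it.

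Your route is shorter. It shows that nodal superconvergence of $\psi_h$ needs only Lemma~\ref{lem:4.3R}, not Lemma~\ref{lem:4.2R}, since it is just the trace of the uniform superconvergence of $\delta_h$. The paper's route instead displays the structural mechanism of the integro-differential equation. At collocation points the derivative error is controlled by the error in $\varphi_h$, so nodal superconvergence of $\psi_h$ is inherited from that of $\varphi_h$.
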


\begin{proof}
For \(x\in\Lambda\), the terminal representations of \(\varphi\) and
\(\varphi_h\) give
\[
|\varphi(x)-\varphi_h(x)|
\le
\left|
\int_x^b
\bigl(\psi(s)-I_{\flat,\mathscr{M}}^{\mu}\psi(s)\bigr)\,ds
\right|
+
b
\left\|
I_{\flat,\mathscr{M}}^{\mu}\psi-\psi_h
\right\|_{L^\infty(\Lambda)} .
\]
Using the change of variables \(s=\tau(\rho)\), together with
\eqref{eq:2.3R}, the first term \corr{can be written as}
\[
\corr{\frac{1}{\mu}}
\left|
\int_{\rho(x)}^b
\bigl(
\widehat{\psi}(\rho)
-
Q_{\flat,\mathscr{M}}^{\mu}\widehat{\psi}(\rho)
\bigr)
(b-\rho)^{1/\mu-1}\,d\rho
\right| .
\]
Hence Lemma~\ref{lem:4.2R}, applied to \(f=\widehat{\psi}\),
\corr{with the factor \(1/\mu\) absorbed into \(C\),} gives
\[
\left|
\int_x^b
\bigl(\psi(s)-I_{\flat,\mathscr{M}}^{\mu}\psi(s)\bigr)\,ds
\right|
\le
C
\begin{cases}
\varpi\bigl(q(1+\mu\min\{\flat,\gamma\}),\flat+1;\mathscr{M}\bigr),
& \flat\ne\gamma,\\[1mm]
\varpi\bigl(q(\mu\flat+1),\flat+1;\mathscr{M}\bigr)\ln\mathscr{M},
& \flat=\gamma.
\end{cases}
\]
On the other hand, \eqref{eq:5.6R} and Lemma~\ref{lem:4.3R} imply
\begin{equation}\label{eq:5.10R}
\left\|
I_{\flat,\mathscr{M}}^{\mu}\psi-\psi_h
\right\|_{L^\infty(\Lambda)}
\le
C
\begin{cases}
\varpi\bigl(q(1+\mu\min\{\flat,\gamma\}),\flat+1;\mathscr{M}\bigr),
& \flat\ne\gamma,\\[1mm]
\varpi\bigl(q(\mu\flat+1),\flat+1;\mathscr{M}\bigr)\ln\mathscr{M},
& \flat=\gamma.
\end{cases}
\end{equation}
Combining the last two estimates proves \eqref{eq:5.7R}.

We next estimate the error at the collocation points. Let
\(x\in\cup_{j=1}^{\mathscr{M}}X_j\). Subtracting the collocation equation
\eqref{eq:3.3R-coll} from the exact equation \eqref{eq:1.1R} at such a point
gives
\[
\begin{aligned}
|\psi(x)-\psi_h(x)|
&\le
|a(x)|\,|\varphi(x)-\varphi_h(x)|
\\
&\quad+
\int_x^b
(s-x)^{\nu-1}|\mathscr K(x,s)|
\,|\varphi(s)-\varphi_h(s)|\,ds .
\end{aligned}
\]
Since \(a\) and \(\mathscr K\) are bounded and \(0<\nu<1\), we obtain
\[
|\psi(x)-\psi_h(x)|
\le
C\|\varphi-\varphi_h\|_{L^\infty(\Lambda)} .
\]
Taking the maximum over the collocation points and using \eqref{eq:5.7R}
proves \eqref{eq:5.8R}.

It remains to estimate the error at \(x=0\). Since
\(\corr{0=x_0\in\overline{\varrho}_1}\), \corr{the continuity of the
interpolation error on the first mesh interval yields}
\[
|\psi(0)-\psi_h(0)|
\le
\left\|
\psi-I_{\flat,\mathscr{M}}^{\mu}\psi
\right\|_{L^\infty(\varrho_1)}
+
\left\|
I_{\flat,\mathscr{M}}^{\mu}\psi-\psi_h
\right\|_{L^\infty(\Lambda)} .
\]
By \eqref{eq:2.3R} and \eqref{eq:2.4R},
\[
\left\|
\psi-I_{\flat,\mathscr{M}}^{\mu}\psi
\right\|_{L^\infty(\varrho_1)}
=
\left\|
\widehat{\psi}
-
Q_{\flat,\mathscr{M}}^{\mu}\widehat{\psi}
\right\|_{L^\infty(\varrho_{1,\mu})}
\le
C\mathscr{M}^{-\flat}.
\]
Combining this estimate with \eqref{eq:5.10R} gives
\[
|\psi(0)-\psi_h(0)|
\le
C
\begin{cases}
\varpi\bigl(q(1+\mu\min\{\flat,\gamma\}),\flat;\mathscr{M}\bigr),
& \flat\ne\gamma,\\[1mm]
\varpi\bigl(q(\mu\flat+1),\flat;\mathscr{M}\bigr)\ln\mathscr{M},
& \flat=\gamma.
\end{cases}
\]
This proves \eqref{eq:5.9R}. The proof is complete.
\end{proof}

\section{The choice of \(\mu\)}
\label{subsec:choice-lambdaR}

The estimates in Theorems~\ref{thm:5.1R} and~\ref{thm:5.2R} are stated in
terms of the transformed function
\[
\widehat{\psi}(\rho)=\psi(\tau(\rho))\in
C^{\corr{\ell},1-\gamma}[b-b^\mu,b),
\qquad
\corr{\ell\in\{\flat,\flat+1\}}.
\]
Thus, the choice of the fractional parameter \(\mu\) should reflect the
terminal expansion of the exact solution. The next result makes this choice
explicit in terms of the singularity exponent \(\nu\).

\begin{corollary}\label{cor:5.1R}
Let \(\mu\in(0,1)\) be chosen so that \(\nu/\mu\in\mathbb N\). Then
the collocation \corr{approximations} \(\psi_h,\varphi_h\) defined by
\eqref{eq:3.1aR}--\eqref{eq:3.1bR} satisfy the following estimates.

\medskip
\noindent{\rm (i)}
Assume that
\[
a,\chi\in C^\flat(\Lambda),\qquad
\mathscr K\in C^\flat(D_R),\qquad
\mathscr K(x,x)\ne0\quad \corr{\text{for all }x\in\Lambda}.
\]
Then there exist \(\mathscr{M}_0\in\mathbb N\) and a constant \(C\), independent of
\(\mathscr{M}\), such that, for all \(\mathscr{M}>\mathscr{M}_0\),
\begin{equation}\label{eq:cor51-vR}
\|\psi-\psi_h\|_{L^\infty(\Lambda)}
\le
C
\begin{cases}
\mathscr{M}^{-\min\{q\mu\flat,\flat\}},
& 1/\mu\in\mathbb N \ \text{or}\ 1/\mu>\flat,\\[1mm]
\mathscr{M}^{-\min\{q,\flat\}},
& 1/\mu\notin\mathbb N \ \text{and}\ 1/\mu<\flat,
\end{cases}
\end{equation}
and
\begin{equation}\label{eq:cor51-uR}
\|\varphi-\varphi_h\|_{L^\infty(\Lambda)}
\le
C
\begin{cases}
\varpi\bigl(q(\mu\flat+1),\flat;\mathscr{M}\bigr),
& 1/\mu\in\mathbb N \ \text{or}\ 1/\mu>\flat,\\[1mm]
\varpi\bigl(2q,\flat;\mathscr{M}\bigr),
& 1/\mu\notin\mathbb N \ \text{and}\ 1/\mu<\flat.
\end{cases}
\end{equation}

\medskip
\noindent{\rm (ii)}
Assume that
\[
a,\chi\in C^{\flat+1}(\Lambda),\qquad
\mathscr K\in C^{\flat+1}(D_R),\qquad
\mathscr K(x,x)\ne0\quad \corr{\text{for all }x\in\Lambda}.
\]
Assume also that the collocation parameters
\(\corr{\{\xi_k\}_{k=1}^{\flat}}\) satisfy \eqref{eq:4.1R}. Then there exist
\(\mathscr{M}_0\in\mathbb N\) and a constant \(C\), independent of
\(\mathscr{M}\), such that, for all \(\mathscr{M}>\mathscr{M}_0\),
\begin{equation}\label{eq:cor51-u-superR}
\|\varphi-\varphi_h\|_{L^\infty(\Lambda)}
\le
C
\begin{cases}
\varpi\bigl(q(\mu\flat+1),\flat+1;\mathscr{M}\bigr),
& 1/\mu\in\mathbb N \ \text{or}\ 1/\mu>\flat,\\[1mm]
\varpi\bigl(2q,\flat+1;\mathscr{M}\bigr),
& 1/\mu\notin\mathbb N \ \text{and}\ 1/\mu<\flat,
\end{cases}
\end{equation}
at the collocation points,
\begin{equation}\label{eq:cor51-v-collR}
\max_{x\in\cup_{j=1}^{\mathscr{M}}X_j}
|\psi(x)-\psi_h(x)|
\le
C
\begin{cases}
\varpi\bigl(q(\mu\flat+1),\flat+1;\mathscr{M}\bigr),
& 1/\mu\in\mathbb N \ \text{or}\ 1/\mu>\flat,\\[1mm]
\varpi\bigl(2q,\flat+1;\mathscr{M}\bigr),
& 1/\mu\notin\mathbb N \ \text{and}\ 1/\mu<\flat,
\end{cases}
\end{equation}
and at the regular endpoint \(x=0\),
\begin{equation}\label{eq:cor51-v-terminalR}
|\psi(0)-\psi_h(0)|
\le
C
\begin{cases}
\varpi\bigl(q(\mu\flat+1),\flat;\mathscr{M}\bigr),
& 1/\mu\in\mathbb N \ \text{or}\ 1/\mu>\flat,\\[1mm]
\varpi\bigl(2q,\flat;\mathscr{M}\bigr),
& 1/\mu\notin\mathbb N \ \text{and}\ 1/\mu<\flat.
\end{cases}
\end{equation}
\end{corollary}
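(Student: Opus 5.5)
The corollary will follow from Theorems~\ref{thm:5.1R} and~\ref{thm:5.2R} once we identify the value of \(\gamma\) for which \(\widehat\psi=\psi\circ\tau\) lies in \(C^{\ell,1-\gamma}[b-b^\mu,b)\), with \(\ell=\flat\) in part (i) and \(\ell=\flat+1\) in part (ii). The plan is to read off the terminal structure of \(\psi\) from Theorem~\ref{thm:1.1R}, applied with regularity index \(\ell\). The data assumptions in (i) and (ii) are exactly those needed there.

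First, we differentiate the expansion \eqref{eq:u-expansion-mainR} and write
\[
\psi(x)=\sum_{(i,k)}\beta_{i,k}(b-x)^{k(1+\nu)+i-1}+\widetilde Y(x),
\]
where \(\widetilde Y\in C^{\ell}(\Lambda)\) and the sum runs over \(k(1+\nu)+i\ge1\). Every exponent then has the form \(m+k\nu\) with \(m,k\in\mathbb N_0\). Next we substitute \(b-x=(b-\rho)^{1/\mu}\). This turns each term into \((b-\rho)^{(m+k\nu)/\mu}\). Since \(\nu/\mu\in\mathbb N\), the powers \(k\nu/\mu\) are nonnegative integers. Hence every term is either a polynomial in \(b-\rho\) or of the form \((b-\rho)^{N+m/\mu}\) with \(N\in\mathbb N_0\) and \(m\ge1\). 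The remainder \(\widetilde Y(\tau(\rho))\) is a \(C^\ell\) function composed with \(b-(b-\rho)^{1/\mu}\). Expanding \(\widetilde Y\) by Taylor's formula at \(b\) up to order \(\ell\), and using the integral remainder, places it in the same class. The leading nonpolynomial exponent is \(1/\mu\). Concretely, \(f(\rho)=(b-\rho)^{\gamma}\widetilde f(\rho)\) with \(\gamma=1/\mu\) is the template from Section~\ref{sec2}, and powers \((b-\rho)^{m/\mu}\) with \(m\ge2\) are even smoother.

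We then split into cases. If \(1/\mu\in\mathbb N\), all exponents \((m+k\nu)/\mu\) are integers, so \(\widehat\psi\) is smooth up to order \(\ell\). Formally \(\widehat\psi\in C^{\ell,1-\gamma}\) with \(\gamma>\ell\), which is the ``\(\flat<\gamma\)'' branch of Theorems~\ref{thm:5.1R} and~\ref{thm:5.2R}. The one caveat is the remainder, which is only \(C^\ell\) in \(x\); for integer \(1/\mu\) the composition \(\widetilde Y\circ\tau\) is still \(C^\ell\). If \(1/\mu>\flat\) (and in part (ii) we compare with \(\ell\)), we take \(\gamma=1/\mu\) and again land in the \(\flat<\gamma\) branch; when \(1/\mu\) lies between \(\flat\) and \(\flat+1\) we use the same branch for the \(\flat\)-dependent rates. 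In both of these cases the rates become \(\mathscr M^{-\min\{q\mu\flat,\flat\}}\) and \(\varpi(q(\mu\flat+1),\cdot\,;\mathscr M)\). In the remaining case, \(1/\mu\notin\mathbb N\) and \(1/\mu<\flat\), we take \(\gamma=1/\mu\) in the \(\flat>\gamma\) branch. Then \(q\mu\gamma=q\) and \(q(1+\mu\min\{\flat,\gamma\})=2q\), which gives exactly \eqref{eq:cor51-vR}--\eqref{eq:cor51-v-terminalR}. The ``\(\flat=\gamma\)'' logarithmic case cannot occur, since \(\gamma=1/\mu=\flat\) would make \(1/\mu\) an integer. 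The estimates \eqref{eq:cor51-u-superR}--\eqref{eq:cor51-v-terminalR} are then direct substitutions into \eqref{eq:5.7R}--\eqref{eq:5.9R}, which use \(\widehat\psi\in C^{\flat+1,1-\gamma}\) from the data regularity in (ii).

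The main obstacle is verifying the \(C^{\ell,1-\gamma}\) derivative bounds rigorously, rather than only term by term. This requires the Taylor control of the remainder after composition with \(\tau\), whose derivatives carry the factor \((b-\rho)^{1/\mu-k}\). It also requires checking that the finite expansion captures all terms with exponents below \(\ell\). I would handle this by writing \(\widetilde Y(x)=\sum_{p<\ell}c_p(b-x)^p+(b-x)^{\ell}R(x)\) with \(R\) bounded, and then differentiating the composed remainder in \(\rho\) directly.
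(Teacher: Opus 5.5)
Your proposal is correct and follows the same route as the paper. You differentiate the terminal expansion of Theorem~\ref{thm:1.1R} with $\ell=\flat$ (resp.\ $\ell=\flat+1$), substitute $b-x=(b-\rho)^{1/\mu}$, and use $\nu/\mu\in\mathbb N$ to see that any noninteger exponents are governed by $1/\mu$. You then read off $\gamma$: $\gamma>\flat$ in the first case, so that $\min\{\flat,\gamma\}=\flat$ (including $\flat<1/\mu<\flat+1$ in part (ii)), and $\gamma=1/\mu<\flat$ in the second case, before invoking Theorems~\ref{thm:5.1R} and~\ref{thm:5.2R}. Your treatment of the transformed remainder is more careful than the paper's one-line remark, but one point needs tightening: you cannot differentiate $r=(b-x)^{\ell}R$ using only the boundedness of $R$, since $R$ need not be differentiable. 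Instead, either bound the derivatives of $\widetilde Y\circ\tau$ directly by Fa\`a di Bruno (each term is $O((b-\rho)^{m/\mu-k})$ with $m\ge1$), or use $|r^{(k)}(x)|\le C(b-x)^{\ell-k}$ for the Taylor remainder $r$.
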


\begin{proof}
\noindent{\rm (i)}
By Theorem~\ref{thm:1.1R}, with \(\ell=\flat\),
\[
\varphi(x)
=
\sum_{(i,k)\in\mathcal I_{\nu,\flat}^{R}}
\alpha_{i,k}^{R}(\nu)(b-x)^{i+k(1+\nu)}
+
Y_{\flat+1}^{R}(x,\nu),
\qquad
Y_{\flat+1}^{R}(\cdot,\nu)\in C^{\flat+1}(\Lambda),
\]
where
\[
\mathcal I_{\nu,\flat}^{R}
=
\{(i,k)\in\mathbb N_0^2:\ i+k(1+\nu)<\flat+1\}.
\]
Differentiation gives
\[
\psi(x)
=
-\sum_{\substack{(i,k)\in\mathcal I_{\nu,\flat}^{R}\\
i+k(1+\nu)>0}}
\alpha_{i,k}^{R}(\nu)
\bigl(i+k(1+\nu)\bigr)
(b-x)^{i+k(1+\nu)-1}
+
\frac{d}{dx}Y_{\flat+1}^{R}(x,\nu).
\]
Since \(Y_{\flat+1}^{R}(\cdot,\nu)\in C^{\flat+1}(\Lambda)\), its derivative belongs to
\(C^\flat(\Lambda)\).

Now put \(x=\tau(\rho)\). Since
\[
b-\tau(\rho)=(b-\rho)^{1/\mu},
\]
we obtain
\[
\widehat{\psi}(\rho)
=
-\sum_{\substack{(i,k)\in\mathcal I_{\nu,\flat}^{R}\\
i+k(1+\nu)>0}}
\alpha_{i,k}^{R}(\nu)
\bigl(i+k(1+\nu)\bigr)
(b-\rho)^{\frac{i+k(1+\nu)-1}{\mu}}
+
\widehat Y_\flat^{R}(\rho,\nu),
\]
where
\[
\widehat Y_\flat^{R}(\rho,\nu)
:=
\corr{\bigl(Y_{\flat+1}^{R}\bigr)'(\tau(\rho),\nu)}.
\]
Since \(\nu/\mu\in\mathbb N\),
\[
\frac{i+k(1+\nu)-1}{\mu}
=
\frac{i+k-1}{\mu}
+
k\frac{\nu}{\mu}.
\]
Thus, the possible noninteger exponents in \(\widehat{\psi}\) are determined by
\(1/\mu\). \corr{The same conclusion applies to the transformed remainder
\(\widehat Y_\flat^{R}\), since the Taylor expansion of
\(\bigl(Y_{\flat+1}^{R}\bigr)'\) about \(x=b\) generates powers
\((b-\rho)^{r/\mu}\).}

If
\[
1/\mu\in\mathbb N
\qquad\text{or}\qquad
1/\mu>\flat,
\]
then no noninteger power of order less than or equal to \(\flat\) occurs in
\(\widehat{\psi}\). Hence
\[
\widehat{\psi}\in C^{\flat,1-\gamma}[b-b^\mu,b)
\]
for some \(\gamma>\flat\). Applying Theorem~\ref{thm:5.1R} gives
\[
\|\psi-\psi_h\|_{L^\infty(\Lambda)}
\le
C\mathscr{M}^{-\min\{q\mu\flat,\flat\}},
\qquad
\|\varphi-\varphi_h\|_{L^\infty(\Lambda)}
\le
C\varpi\bigl(q(\mu\flat+1),\flat;\mathscr{M}\bigr).
\]
This proves the first cases in \eqref{eq:cor51-vR} and
\eqref{eq:cor51-uR}.

If
\[
1/\mu\notin\mathbb N
\qquad\text{and}\qquad
1/\mu<\flat,
\]
then \corr{the smallest possible noninteger terminal exponent in
\(\widehat{\psi}\) is \(1/\mu\)}. Hence
\[
\widehat{\psi}\in C^{\flat,1-1/\mu}[b-b^\mu,b).
\]
Theorem~\ref{thm:5.1R}, with \(\gamma=1/\mu\), gives
\[
\|\psi-\psi_h\|_{L^\infty(\Lambda)}
\le
C\mathscr{M}^{-\min\{q,\flat\}},
\qquad
\|\varphi-\varphi_h\|_{L^\infty(\Lambda)}
\le
C\varpi(2q,\flat;\mathscr{M}).
\]
This proves the second cases in \eqref{eq:cor51-vR} and
\eqref{eq:cor51-uR}.

\medskip
\noindent{\rm (ii)}
The proof follows the same argument, using Theorem~\ref{thm:1.1R} with
\(\ell=\flat+1\). Then
\[
\varphi(x)
=
\sum_{(i,k)\in\mathcal I_{\nu,\flat+1}^{R}}
\alpha_{i,k}^{R}(\nu)(b-x)^{i+k(1+\nu)}
+
Y_{\flat+2}^{R}(x,\nu),
\qquad
Y_{\flat+2}^{R}(\cdot,\nu)\in C^{\flat+2}(\Lambda),
\]
where
\[
\mathcal I_{\nu,\flat+1}^{R}
=
\{(i,k)\in\mathbb N_0^2:\ i+k(1+\nu)<\flat+2\}.
\]
After differentiating and using \(x=\tau(\rho)\), the regularity of
\(\widehat{\psi}(\rho)=\psi(\tau(\rho))\) is determined by the powers
\[
(b-\rho)^{\frac{i+k(1+\nu)-1}{\mu}}
=
(b-\rho)^{\frac{i+k-1}{\mu}+k\frac{\nu}{\mu}}.
\]
Since \(\nu/\mu\in\mathbb N\), the possible noninteger powers are governed
by \(1/\mu\), \corr{including those generated by the transformed smooth
remainder}.

If
\[
1/\mu\in\mathbb N
\qquad\text{or}\qquad
1/\mu>\flat+1,
\]
then no noninteger power of order less than or equal to \(\flat+1\) occurs in
\(\widehat{\psi}\). Hence
\[
\widehat{\psi}\in C^{\flat+1,1-\gamma}[b-b^\mu,b)
\]
for some \(\gamma>\flat+1\). Theorem~\ref{thm:5.2R} gives
\[
\|\varphi-\varphi_h\|_{L^\infty(\Lambda)}
+
\max_{x\in\cup_{j=1}^{\mathscr{M}}X_j}
|\psi(x)-\psi_h(x)|
\le
C\varpi\bigl(q(\mu\flat+1),\flat+1;\mathscr{M}\bigr),
\]
and
\[
|\psi(0)-\psi_h(0)|
\le
C\varpi\bigl(q(\mu\flat+1),\flat;\mathscr{M}\bigr).
\]
The same estimates follow when
\[
1/\mu\notin\mathbb N
\qquad\text{and}\qquad
\flat<1/\mu<\flat+1,
\]
because then
\(\widehat{\psi}\in C^{\flat+1,1-1/\mu}[b-b^\mu,b)\), and
Theorem~\ref{thm:5.2R} with \(\gamma=1/\mu\) still gives
\(\min\{\flat,\gamma\}=\flat\). Thus, the first cases in
\eqref{eq:cor51-u-superR}--\eqref{eq:cor51-v-terminalR} hold whenever
\[
1/\mu\in\mathbb N
\qquad\text{or}\qquad
1/\mu>\flat.
\]

If
\[
1/\mu\notin\mathbb N
\qquad\text{and}\qquad
1/\mu<\flat,
\]
then
\(\widehat{\psi}\in C^{\flat+1,1-1/\mu}[b-b^\mu,b)\). Applying
Theorem~\ref{thm:5.2R} with \(\gamma=1/\mu\) gives
\[
\|\varphi-\varphi_h\|_{L^\infty(\Lambda)}
+
\max_{x\in\cup_{j=1}^{\mathscr{M}}X_j}
|\psi(x)-\psi_h(x)|
\le
C\varpi(2q,\flat+1;\mathscr{M}),
\]
and
\[
|\psi(0)-\psi_h(0)|
\le
C\varpi(2q,\flat;\mathscr{M}).
\]
This proves the second cases in
\eqref{eq:cor51-u-superR}--\eqref{eq:cor51-v-terminalR}. The proof is
complete.
\end{proof}

\begin{remark}\label{rem:5.1R}
\corr{Under the assumptions of Corollary~\ref{cor:5.1R}{\rm (ii)} and for
a sufficiently large grading parameter \(q\),} the fractional backward
collocation method can attain
\[
\|\psi-\psi_h\|_{L^\infty(\Lambda)}
\le C\mathscr{M}^{-\flat},
\qquad
\|\varphi-\varphi_h\|_{L^\infty(\Lambda)}
\le C\mathscr{M}^{-\flat-1}.
\]
\corr{These are the maximal convergence orders predicted by the corresponding
local interpolation and superconvergence estimates:}
\(\psi_h\in S_\flat^\mu(\mathscr{P}_{\mathscr{M}})\), while
\(\varphi_h\) is recovered from \(\psi_h\) by the backward integration formula
\eqref{eq:3.1bR}.
\end{remark}

The parameter \(\mu\) plays an important role in the construction of the
method and in the attainable error bounds. Corollary~\ref{cor:5.1R} assumes
\[
\frac{\nu}{\mu}\in\mathbb N,
\]
that is,
\[
\mu=\frac{\nu}{k}
\qquad\text{for some }k\in\mathbb N.
\]
For each admissible value of \(k\), the same corollary indicates how the
grading parameter \(q\) should be chosen in order to obtain the
\corr{highest convergence orders allowed by the estimates}. However, very large
values of \(q\) lead to a strong concentration of mesh points near the terminal
endpoint \(b\). In computations, this may increase the effect of roundoff
errors.

For example, if \(1/\nu\in\mathbb N\), then
\[
\frac1\mu=\frac{k}{\nu}\in\mathbb N.
\]
Corollary~\ref{cor:5.1R}{\rm (i)} then shows that, in order to obtain the
highest order for \(\psi_h\), it is sufficient to take
\[
q\ge\frac1\mu=\frac{k}{\nu},
\]
whereas the corresponding condition for the integrated approximation
\(\varphi_h\) is
\[
q\ge\frac{\flat}{\mu\flat+1}
=
\corr{\frac{k\flat}{k+\nu\flat}}.
\]
Both lower bounds increase with \(k\). Hence, the smallest admissible grading is
obtained when \(k=1\). Thus, the natural practical choice is
\[
\mu=\nu.
\]
\corr{The remaining cases of Corollary~\ref{cor:5.1R} lead to the same practical
recommendation: taking \(k=1\) avoids unnecessarily decreasing \(\mu\) and
does not require stronger mesh grading than larger admissible values of
\(k\).} Therefore, in the numerical experiments below, we take
\(\mu=\nu\).

\begin{remark}\label{rem:5.2R}
For the adjoint Volterra integro-differential equation \eqref{eq:1.1R}, the
weak singularity is located at the terminal endpoint \(b\). If a standard
piecewise polynomial collocation method is used on a uniform mesh, then the
convergence orders for the approximations of \(\varphi\) and
\(\psi=\varphi'\) are limited by this weak terminal regularity.
\corr{For solutions containing the leading fractional term
\((b-x)^{1+\nu}\), the corresponding generic orders cannot exceed}
\(1+\nu\) for \(\varphi_h\) and \(\nu\) for \(\psi_h\). By contrast,
Corollary~\ref{cor:5.1R} shows that the fractional backward collocation method,
combined with a backward graded mesh, can attain the \corr{maximal orders
allowed by the local approximation space}, provided that \(\mu\) and \(q\)
are chosen appropriately. Even on a uniform mesh, \(q=1\), the fractional
approximation space gives higher rates than standard polynomial collocation.
This behavior is illustrated in the numerical experiments in the next section.
\end{remark}

\section{Numerical experiments}
\label{sec:numericalR}

In this section, we present numerical experiments to verify the convergence and
superconvergence estimates established in Corollary~\ref{cor:5.1R}. The
computations are carried out on the backward graded mesh introduced in
Section~\ref{sec2}. For a mesh with \(\mathscr{M}\) subintervals, we define
the global errors by
\[
\mathcal E_{\mathscr{M}}^\varphi
=
\|\varphi-\varphi_h\|_{L^\infty(\Lambda)},
\qquad
\mathcal E_{\mathscr{M}}^\psi
=
\|\psi-\psi_h\|_{L^\infty(\Lambda)},
\]
and the collocation-point and regular-endpoint errors by
\[
\mathcal E_{\mathscr{M},c}^\psi
=
\max_{1\le j\le \mathscr{M}}
\max_{1\le l\le \flat}
\left|\psi(x_{j,l})-\psi_h(x_{j,l})\right|,
\qquad
\mathcal E_{\mathscr{M},0}^\psi
=
\left|\psi(0)-\psi_h(0)\right|.
\]
\corr{For the reported global errors, the maximum norm is approximated on a
grid obtained by subdividing each mesh interval into \(40\) equal parts.}

The corresponding experimental convergence orders are computed from
\[
\mathcal R_{\mathscr{M}}^\varphi
=
\log_2\left(
\frac{\corr{\mathcal E_{\mathscr{M}/2}^\varphi}}
{\mathcal E_{\mathscr{M}}^\varphi}
\right),
\qquad
\mathcal R_{\mathscr{M}}^\psi
=
\log_2\left(
\frac{\corr{\mathcal E_{\mathscr{M}/2}^\psi}}
{\mathcal E_{\mathscr{M}}^\psi}
\right),
\]
and
\[
\mathcal R_{\mathscr{M},c}^\psi
=
\log_2\left(
\frac{\corr{\mathcal E_{\mathscr{M}/2,c}^\psi}}
{\mathcal E_{\mathscr{M},c}^\psi}
\right),
\qquad
\mathcal R_{\mathscr{M},0}^\psi
=
\log_2\left(
\frac{\corr{\mathcal E_{\mathscr{M}/2,0}^\psi}}
{\mathcal E_{\mathscr{M},0}^\psi}
\right).
\]
In the tables below, ``EOC'' denotes the expected order of convergence
predicted by Corollary~\ref{cor:5.1R}.

\begin{example}\label{ex:6.1R}
Consider the adjoint weakly singular Volterra integro-differential equation
\begin{equation}\label{eq:ex1R-model}
\varphi'(x)
=
f(x)+\varphi(x)
+
\int_x^1
(s-x)^{\nu-1}e^{s}\varphi(s)\,ds,
\qquad
0\le x\le1,
\qquad
\varphi(1)=0,
\end{equation}
where $0<\nu<1$. The source term is chosen as
\begin{equation}\label{eq:ex1R-forcing}
\begin{aligned}
f(x)
={}&
\left(
-(1+\nu)(1-x)^\nu
-2(1-x)
-2(1-x)^{1+\nu}
-2(1-x)^2
\right)e^{-x}
\\
&\quad
-
B(\nu,2+\nu)(1-x)^{1+2\nu}
-
B(\nu,3)(1-x)^{2+\nu},
\end{aligned}
\end{equation}
\corr{where $B(\cdot,\cdot)$ denotes the beta function,} so that the exact
solution is
\begin{equation}\label{eq:ex1R-exact}
\varphi(x)
=
\left((1-x)^{1+\nu}+(1-x)^2\right)e^{-x}.
\end{equation}
The fractional-power term $(1-x)^{1+\nu}$ produces the expected loss of
regularity at the terminal endpoint $x=1$, in agreement with
Theorem~\ref{thm:1.1R}.

For $\flat=2$, the collocation parameters are chosen as
$\xi_1=1/4$ and $\xi_2=5/6$, whereas for $\flat=3$, we take
$\xi_1=1/3$, $\xi_2=1/2$, and $\xi_3=2/3$.
These choices satisfy condition~\eqref{eq:4.1R}.
\end{example}

\noindent\textbf{Test 1.}
We first take $\nu=\mu=1/2$. Since $1/\mu=2\in\mathbb N$,
Corollary~\ref{cor:5.1R} predicts, for a sufficiently graded mesh,
$\mathcal R_{\mathscr{M}}^\psi=\flat$,
$\mathcal R_{\mathscr{M}}^\varphi
=\mathcal R_{\mathscr{M},c}^\psi=\flat+1$, and
$\mathcal R_{\mathscr{M},0}^\psi=\flat$.
The results reported in Tables~\ref{tab:right-ex1-global-mu-half} and
\ref{tab:right-ex1-super-mu-half} confirm these estimates. For
$\flat=2$, the global approximation of $\psi$ converges with order two,
while the errors of $\varphi_h$ and the collocation-point approximation of
$\psi$ converge with order three. For $\flat=3$, the corresponding
orders are three and four. The approximation of $\psi(0)$ also converges
with the expected order $\flat$.

We next consider the uniform mesh $q=1$. In this case,
Corollary~\ref{cor:5.1R} gives
$\mathcal R_{\mathscr{M}}^\psi=\mu\flat$,
$\mathcal R_{\mathscr{M}}^\varphi
=\mathcal R_{\mathscr{M},c}^\psi=\mu\flat+1$, and
$\mathcal R_{\mathscr{M},0}^\psi
=\min\{\mu\flat+1,\flat\}$.
The results in Tables~\ref{tab:right-ex1-global-uniform-mu-half} and
\ref{tab:right-ex1-super-uniform-mu-half} are consistent with these
estimates. In particular, the reduced rate of the global derivative error
illustrates the influence of the terminal singularity when no mesh grading is
employed.

\medskip

\noindent\textbf{Test 2.}
We next consider the irrational fractional order
$\nu=\mu=\sqrt{3}/3$. Since $1/\mu=\sqrt{3}<2$, the second case of
Corollary~\ref{cor:5.1R} applies for both $\flat=2$ and $\flat=3$.
With an appropriate grading parameter $q$, the predicted optimal orders are
$\mathcal R_{\mathscr{M}}^\psi=\flat$,
$\mathcal R_{\mathscr{M}}^\varphi
=\mathcal R_{\mathscr{M},c}^\psi=\flat+1$, and
$\mathcal R_{\mathscr{M},0}^\psi=\flat$.
Tables~\ref{tab:right-ex1-global-mu-sqrt3} and
\ref{tab:right-ex1-super-mu-sqrt3} confirm these rates. The global
approximation of $\psi$ approaches orders two and three for
$\flat=2$ and $\flat=3$, respectively, while the approximation of
$\varphi$ and the collocation-point approximation of $\psi$ attain the
superconvergent orders three and four.

Finally, Table~\ref{tab:right-ex1-mixed-mu-sqrt3} illustrates the effect of
suboptimal mesh grading for $\flat=2$. For $q=1$, second-order
convergence is observed for both $\mathcal E_{\mathscr{M}}^\varphi$ and
$\mathcal E_{\mathscr{M},c}^\psi$, in agreement with
Corollary~\ref{cor:5.1R}. For the global derivative error with $q=3/2$,
the theoretical estimate gives
$\mathcal R_{\mathscr{M}}^\psi=\min\{q,\flat\}=3/2$.
The observed rate is approximately two, which is higher than the guaranteed
theoretical rate. This indicates that the corresponding theoretical bound is
conservative for this particular test.

\begin{table}[htbp]
\centering
\caption{Global approximation errors and convergence rates for the adjoint
equation with \(\nu=1/2\) and \(\mu=\nu\).}
\label{tab:right-ex1-global-mu-half}
\renewcommand{\arraystretch}{1.18}
\setlength{\tabcolsep}{5pt}
\begin{tabular}{c|cc|cc|cc|cc}
\toprule
\multirow{3}{*}{\(\mathscr{M}\)}
& \multicolumn{4}{c|}{\(\flat=2\)}
& \multicolumn{4}{c}{\(\flat=3\)}
\\
\cmidrule(lr){2-5}\cmidrule(lr){6-9}
& \multicolumn{2}{c|}{\corr{\(q=3/2\)}}
& \multicolumn{2}{c|}{\corr{\(q=2\)}}
& \multicolumn{2}{c|}{\corr{\(q=8/5\)}}
& \multicolumn{2}{c}{\corr{\(q=2\)}}
\\
\cmidrule(lr){2-3}\cmidrule(lr){4-5}
\cmidrule(lr){6-7}\cmidrule(lr){8-9}
& \(\mathcal E_{\mathscr{M}}^\varphi\)
& \(\mathcal R_{\mathscr{M}}^\varphi\)
& \(\mathcal E_{\mathscr{M}}^\psi\)
& \(\mathcal R_{\mathscr{M}}^\psi\)
& \(\mathcal E_{\mathscr{M}}^\varphi\)
& \(\mathcal R_{\mathscr{M}}^\varphi\)
& \(\mathcal E_{\mathscr{M}}^\psi\)
& \(\mathcal R_{\mathscr{M}}^\psi\)
\\
\midrule
64  & \(2.904\times10^{-6}\) & 2.965
    & \(2.615\times10^{-3}\) & 1.970
    & \(3.142\times10^{-8}\) & 4.004
    & \(3.421\times10^{-5}\) & 2.962 \\
128 & \(3.677\times10^{-7}\) & 2.981
    & \(6.605\times10^{-4}\) & 1.985
    & \(1.958\times10^{-9}\) & 4.004
    & \(4.332\times10^{-6}\) & 2.981 \\
256 & \(4.629\times10^{-8}\) & 2.990
    & \(1.660\times10^{-4}\) & 1.992
    & \(1.221\times10^{-10}\) & 4.004
    & \(5.450\times10^{-7}\) & 2.991 \\
512 & \(5.809\times10^{-9}\) & 2.994
    & \(4.161\times10^{-5}\) & 1.996
    & \(7.613\times10^{-12}\) & 4.003
    & \(6.835\times10^{-8}\) & 2.995 \\
\midrule
EOC & & 3 & & 2 & & 4 & & 3 \\
\bottomrule
\end{tabular}
\end{table}

\begin{table}[htbp]
\centering
\caption{Collocation-point and regular-endpoint errors for \(\psi_h\) in the
adjoint equation with \(\nu=1/2\) and \(\mu=\nu\).}
\label{tab:right-ex1-super-mu-half}
\renewcommand{\arraystretch}{1.18}
\setlength{\tabcolsep}{5pt}
\begin{tabular}{c|cc|cc|cc|cc}
\toprule
\multirow{3}{*}{\(\mathscr{M}\)}
& \multicolumn{4}{c|}{\(\flat=2\)}
& \multicolumn{4}{c}{\(\flat=3\)}
\\
\cmidrule(lr){2-5}\cmidrule(lr){6-9}
& \multicolumn{2}{c|}{\corr{\(q=3/2\)}}
& \multicolumn{2}{c|}{\corr{\(q=1\)}}
& \multicolumn{2}{c|}{\corr{\(q=8/5\)}}
& \multicolumn{2}{c}{\corr{\(q=6/5\)}}
\\
\cmidrule(lr){2-3}\cmidrule(lr){4-5}
\cmidrule(lr){6-7}\cmidrule(lr){8-9}
& \(\mathcal E_{\mathscr{M},c}^\psi\)
& \(\mathcal R_{\mathscr{M},c}^\psi\)
& \(\mathcal E_{\mathscr{M},0}^\psi\)
& \(\mathcal R_{\mathscr{M},0}^\psi\)
& \(\mathcal E_{\mathscr{M},c}^\psi\)
& \(\mathcal R_{\mathscr{M},c}^\psi\)
& \(\mathcal E_{\mathscr{M},0}^\psi\)
& \(\mathcal R_{\mathscr{M},0}^\psi\)
\\
\midrule
64  & \(3.300\times10^{-6}\) & 3.009
    & \(6.643\times10^{-4}\) & 1.994
    & \(2.869\times10^{-8}\) & 3.976
    & \(7.506\times10^{-6}\) & 2.988 \\
128 & \(4.089\times10^{-7}\) & 3.013
    & \(1.664\times10^{-4}\) & 1.997
    & \(1.801\times10^{-9}\) & 3.993
    & \(9.421\times10^{-7}\) & 2.994 \\
256 & \(5.068\times10^{-8}\) & 3.012
    & \(4.164\times10^{-5}\) & 1.999
    & \(1.125\times10^{-10}\) & 4.001
    & \(1.180\times10^{-7}\) & 2.997 \\
512 & \(6.288\times10^{-9}\) & 3.011
    & \(1.042\times10^{-5}\) & 1.999
    & \(7.014\times10^{-12}\) & 4.004
    & \(1.477\times10^{-8}\) & 2.998 \\
\midrule
EOC & & 3 & & 2 & & 4 & & 3 \\
\bottomrule
\end{tabular}
\end{table}

\begin{table}[htbp]
\centering
\caption{Global approximation errors and convergence rates for the adjoint
equation with \(\nu=1/2\), \(\mu=\nu\), and \(q=1\).}
\label{tab:right-ex1-global-uniform-mu-half}
\renewcommand{\arraystretch}{1.18}
\setlength{\tabcolsep}{5pt}
\begin{tabular}{c|cc|cc|cc|cc}
\toprule
\multirow{3}{*}{\(\mathscr{M}\)}
& \multicolumn{4}{c|}{\(\flat=2\)}
& \multicolumn{4}{c}{\(\flat=3\)}
\\
\cmidrule(lr){2-5}\cmidrule(lr){6-9}
& \multicolumn{4}{c|}{\corr{\(q=1\)}}
& \multicolumn{4}{c}{\corr{\(q=1\)}}
\\
\cmidrule(lr){2-5}\cmidrule(lr){6-9}
& \(\mathcal E_{\mathscr{M}}^\varphi\)
& \(\mathcal R_{\mathscr{M}}^\varphi\)
& \(\mathcal E_{\mathscr{M}}^\psi\)
& \(\mathcal R_{\mathscr{M}}^\psi\)
& \(\mathcal E_{\mathscr{M}}^\varphi\)
& \(\mathcal R_{\mathscr{M}}^\varphi\)
& \(\mathcal E_{\mathscr{M}}^\psi\)
& \(\mathcal R_{\mathscr{M}}^\psi\)
\\
\midrule
128 & \(1.540\times10^{-6}\) & 2.095
    & \(1.395\times10^{-3}\) & 1.138
    & \(1.314\times10^{-7}\) & 2.638
    & \(8.372\times10^{-5}\) & 1.646 \\
256 & \(3.676\times10^{-7}\) & 2.067
    & \(6.529\times10^{-4}\) & 1.095
    & \(2.167\times10^{-8}\) & 2.600
    & \(2.786\times10^{-5}\) & 1.587 \\
512 & \(8.891\times10^{-8}\) & 2.048
    & \(3.118\times10^{-4}\) & 1.066
    & \(3.647\times10^{-9}\) & 2.571
    & \(9.541\times10^{-6}\) & 1.546 \\
\midrule
EOC & & 2 & & 1 & & 2.5 & & 1.5 \\
\bottomrule
\end{tabular}
\end{table}

\begin{table}[htbp]
\centering
\caption{Collocation-point and regular-endpoint errors for \(\psi_h\) in the
adjoint equation with \(\nu=1/2\), \(\mu=\nu\), and \(q=1\).}
\label{tab:right-ex1-super-uniform-mu-half}
\renewcommand{\arraystretch}{1.18}
\setlength{\tabcolsep}{5pt}
\begin{tabular}{c|cc|cc|cc|cc}
\toprule
\multirow{3}{*}{\(\mathscr{M}\)}
& \multicolumn{4}{c|}{\(\flat=2\)}
& \multicolumn{4}{c}{\(\flat=3\)}
\\
\cmidrule(lr){2-5}\cmidrule(lr){6-9}
& \multicolumn{4}{c|}{\corr{\(q=1\)}}
& \multicolumn{4}{c}{\corr{\(q=1\)}}
\\
\cmidrule(lr){2-5}\cmidrule(lr){6-9}
& \(\mathcal E_{\mathscr{M},c}^\psi\)
& \(\mathcal R_{\mathscr{M},c}^\psi\)
& \(\mathcal E_{\mathscr{M},0}^\psi\)
& \(\mathcal R_{\mathscr{M},0}^\psi\)
& \(\mathcal E_{\mathscr{M},c}^\psi\)
& \(\mathcal R_{\mathscr{M},c}^\psi\)
& \(\mathcal E_{\mathscr{M},0}^\psi\)
& \(\mathcal R_{\mathscr{M},0}^\psi\)
\\
\midrule
128 & \(2.077\times10^{-6}\) & 2.198
    & \(1.664\times10^{-4}\) & 1.997
    & \(2.994\times10^{-7}\) & 2.640
    & \(5.207\times10^{-7}\) & 3.009 \\
256 & \(4.710\times10^{-7}\) & 2.141
    & \(4.164\times10^{-5}\) & 1.999
    & \(4.925\times10^{-8}\) & 2.604
    & \(6.426\times10^{-8}\) & 3.019 \\
512 & \(1.099\times10^{-7}\) & 2.100
    & \(1.042\times10^{-5}\) & 1.999
    & \(8.257\times10^{-9}\) & 2.576
    & \(7.869\times10^{-9}\) & 3.030 \\
\midrule
EOC & & 2 & & \corr{2} & & 2.5 & & \corr{2.5} \\
\bottomrule
\end{tabular}
\end{table}

\begin{table}[htbp]
\centering
\caption{Global approximation errors and convergence rates for the adjoint
equation with \(\nu=\sqrt{3}/3\) and \(\mu=\nu\).}
\label{tab:right-ex1-global-mu-sqrt3}
\renewcommand{\arraystretch}{1.18}
\setlength{\tabcolsep}{5pt}
\begin{tabular}{c|cc|cc|cc|cc}
\toprule
\multirow{3}{*}{\(\mathscr{M}\)}
& \multicolumn{4}{c|}{\(\flat=2\)}
& \multicolumn{4}{c}{\(\flat=3\)}
\\
\cmidrule(lr){2-5}\cmidrule(lr){6-9}
& \multicolumn{2}{c|}{\corr{\(q=3/2\)}}
& \multicolumn{2}{c|}{\corr{\(q=2\)}}
& \multicolumn{2}{c|}{\corr{\(q=2\)}}
& \multicolumn{2}{c}{\corr{\(q=3\)}}
\\
\cmidrule(lr){2-3}\cmidrule(lr){4-5}
\cmidrule(lr){6-7}\cmidrule(lr){8-9}
& \(\mathcal E_{\mathscr{M}}^\varphi\)
& \(\mathcal R_{\mathscr{M}}^\varphi\)
& \(\mathcal E_{\mathscr{M}}^\psi\)
& \(\mathcal R_{\mathscr{M}}^\psi\)
& \(\mathcal E_{\mathscr{M}}^\varphi\)
& \(\mathcal R_{\mathscr{M}}^\varphi\)
& \(\mathcal E_{\mathscr{M}}^\psi\)
& \(\mathcal R_{\mathscr{M}}^\psi\)
\\
\midrule
64  & \(2.944\times10^{-6}\) & 2.965
    & \(2.604\times10^{-3}\) & 1.968
    & \corr{\(5.966\times10^{-8}\)} & \corr{4.000}
    & \(1.063\times10^{-4}\) & 2.925 \\
128 & \(3.726\times10^{-7}\) & 2.982
    & \(6.584\times10^{-4}\) & 1.984
    & \corr{\(3.727\times10^{-9}\)} & \corr{4.001}
    & \(1.364\times10^{-5}\) & 2.962 \\
256 & \(4.688\times10^{-8}\) & 2.991
    & \(1.655\times10^{-4}\) & 1.992
    & \corr{\(2.329\times10^{-10}\)} & \corr{4.001}
    & \(1.727\times10^{-6}\) & 2.981 \\
512 & \(5.881\times10^{-9}\) & 2.995
    & \(4.149\times10^{-5}\) & 1.996
    & \corr{\(1.455\times10^{-11}\)} & \corr{4.000}
    & \(2.173\times10^{-7}\) & 2.991 \\
\midrule
EOC & & 3 & & 2 & & 4 & & 3 \\
\bottomrule
\end{tabular}
\end{table}

\begin{table}[htbp]
\centering
\caption{Collocation-point and regular-endpoint errors for \(\psi_h\) in the
adjoint equation with \(\nu=\sqrt{3}/3\) and \(\mu=\nu\).}
\label{tab:right-ex1-super-mu-sqrt3}
\renewcommand{\arraystretch}{1.18}
\setlength{\tabcolsep}{5pt}
\begin{tabular}{c|cc|cc|cc|cc}
\toprule
\multirow{3}{*}{\(\mathscr{M}\)}
& \multicolumn{4}{c|}{\(\flat=2\)}
& \multicolumn{4}{c}{\(\flat=3\)}
\\
\cmidrule(lr){2-5}\cmidrule(lr){6-9}
& \multicolumn{2}{c|}{\corr{\(q=3/2\)}}
& \multicolumn{2}{c|}{\corr{\(q=1\)}}
& \multicolumn{2}{c|}{\corr{\(q=2\)}}
& \multicolumn{2}{c}{\corr{\(q=3/2\)}}
\\
\cmidrule(lr){2-3}\cmidrule(lr){4-5}
\cmidrule(lr){6-7}\cmidrule(lr){8-9}
& \(\mathcal E_{\mathscr{M},c}^\psi\)
& \(\mathcal R_{\mathscr{M},c}^\psi\)
& \(\mathcal E_{\mathscr{M},0}^\psi\)
& \(\mathcal R_{\mathscr{M},0}^\psi\)
& \(\mathcal E_{\mathscr{M},c}^\psi\)
& \(\mathcal R_{\mathscr{M},c}^\psi\)
& \(\mathcal E_{\mathscr{M},0}^\psi\)
& \(\mathcal R_{\mathscr{M},0}^\psi\)
\\
\midrule
64  & \(3.277\times10^{-6}\) & 2.994
    & \(6.625\times10^{-4}\) & 1.994
    & \corr{\(8.572\times10^{-8}\)} & 3.950
    & \(1.379\times10^{-5}\) & 2.978 \\
128 & \(4.093\times10^{-7}\) & 3.001
    & \(1.660\times10^{-4}\) & 1.997
    & \corr{\(5.442\times10^{-9}\)} & 3.977
    & \(1.737\times10^{-6}\) & 2.989 \\
256 & \(5.103\times10^{-8}\) & 3.004
    & \(4.154\times10^{-5}\) & 1.998
    & \corr{\(3.424\times10^{-10}\)} & \corr{3.990}
    & \(2.179\times10^{-7}\) & 2.994 \\
512 & \corr{\(6.361\times10^{-9}\)} & 3.004
    & \(1.039\times10^{-5}\) & 1.999
    & \corr{\(2.146\times10^{-11}\)} & \corr{3.996}
    & \(2.729\times10^{-8}\) & 2.997 \\
\midrule
EOC & & 3 & & 2 & & 4 & & 3 \\
\bottomrule
\end{tabular}
\end{table}

\begin{table}[htbp]
\centering
\caption{Selected errors and convergence rates for the adjoint equation with
\(\nu=\sqrt{3}/3\), \(\mu=\nu\), and \(\flat=2\).}
\label{tab:right-ex1-mixed-mu-sqrt3}
\renewcommand{\arraystretch}{1.18}
\setlength{\tabcolsep}{7pt}
\begin{tabular}{c|cc|cc|cc}
\toprule
\multirow{3}{*}{\(\mathscr{M}\)}
& \multicolumn{2}{c|}{\(\mathcal E_{\mathscr{M}}^\varphi\)}
& \multicolumn{2}{c|}{\(\mathcal E_{\mathscr{M}}^\psi\)}
& \multicolumn{2}{c}{\(\mathcal E_{\mathscr{M},c}^\psi\)}
\\
\cmidrule(lr){2-3}\cmidrule(lr){4-5}\cmidrule(lr){6-7}
& \multicolumn{2}{c|}{\corr{\(q=1\)}}
& \multicolumn{2}{c|}{\corr{\(q=3/2\)}}
& \multicolumn{2}{c}{\corr{\(q=1\)}}
\\
\cmidrule(lr){2-3}\cmidrule(lr){4-5}\cmidrule(lr){6-7}
& Error
& \(\mathcal R_{\mathscr{M}}^\varphi\)
& Error
& \(\mathcal R_{\mathscr{M}}^\psi\)
& Error
& \(\mathcal R_{\mathscr{M},c}^\psi\)
\\
\midrule
128 & \(1.230\times10^{-6}\) & 2.105
    & \(3.721\times10^{-4}\) & 1.991
    & \(1.398\times10^{-6}\) & 2.187 \\
256 & \(2.928\times10^{-7}\) & 2.070
    & \(9.332\times10^{-5}\) & 1.995
    & \(3.198\times10^{-7}\) & 2.128 \\
512 & \(7.086\times10^{-8}\) & 2.047
    & \(2.337\times10^{-5}\) & 1.998
    & \(7.526\times10^{-8}\) & 2.087 \\
\midrule
EOC & & 2 & & \(\frac32\) & & 2 \\
\bottomrule
\end{tabular}
\end{table}

\section{Conclusion}\label{sec:conclusion}

We developed \corr{an optimal-order fractional backward collocation method}
for adjoint Volterra integro-differential equations with weakly singular
kernels. The terminal condition and the backward Volterra structure
\corr{induce} a weak singularity at the terminal endpoint, which
\corr{may lead to a deterioration of the convergence order} for standard
polynomial collocation methods.

We first established a regularity result showing that the solution is
continuously differentiable, while its second derivative may exhibit a weak
singularity near the terminal endpoint. This regularity structure motivates
the use of a graded mesh refined toward \(b\) together with a fractional
polynomial approximation space adapted to the terminal singularity.

The convergence analysis \corr{provides global error estimates for both
\(\psi=\varphi'\) and \(\varphi\), together with superconvergence estimates
for \(\varphi\), for \(\psi\) at the collocation points, and for \(\psi\) at
the regular endpoint \(x=0\)} under the prescribed condition on the
collocation parameters. In particular, suitable choices of the fractional
parameter \(\mu\) and the grading exponent \(q\) allow the method to attain
\corr{the maximal convergence orders predicted by the corresponding
approximation and superconvergence estimates}.

The numerical experiments \corr{are consistent with} the theoretical
convergence and superconvergence rates and demonstrate the effectiveness of
the fractional backward collocation method for adjoint weakly singular
Volterra integro-differential equations with terminal-endpoint singularities.
\corr{The analysis also provides a framework for extending the method to
more general terminal-value problems and Volterra-type equations exhibiting
weak endpoint singularities.}

\section*{Data availability statement}
No datasets were generated or analyzed during the current study.

\section*{Declarations}

\section*{Conflict of interest}
The authors declare that they have no conflict of interest.


\bibliographystyle{elsart-num-sort}
	\bibliography{Bibfileamc}

\end{document}